\documentclass{article}
\usepackage{comment}
\usepackage{graphics}	
\usepackage[utf8]{inputenc}
\usepackage[english]{babel}										%
\usepackage{amsmath}
\usepackage{tikz}
\usetikzlibrary{cd}
\usepackage{enumerate}
\usepackage{bbm}
\usepackage{tikz-cd}
\tikzcdset{scale cd/.style={every label/.append style={scale=#1},
    cells={nodes={scale=#1}}}}
\usepackage{amssymb}
\usepackage{mathrsfs}
\tikzcdset{scale cd/.style={every label/.append style={scale=#1},
    cells={nodes={scale=#1}}}}
%Theorem Lemma etc
\usepackage{amsthm}
\newtheorem{theorem}{Theorem}
\newtheorem{lemma}[theorem]{Lemma}
\newtheorem{prop}[theorem]{Proposition}
\newtheorem{ex}[theorem]{Example}
\newtheorem{deff}[theorem]{Definition}

\newtheorem{rmk}[theorem]{Remark}
%CUSTOM COMMANDS
\newcommand{\Res}{\text{Res}}

\newcommand{\hh}{\mathcal{H}}
\newcommand{\GG}{\mathcal{G}}
\newcommand{\ff}{\mathcal{F}}
\newcommand{\EE}{\mathcal{E}}
\newcommand{\cala}{\mathcal{A}}

\newcommand{\Spec}{\text{Spec}}

\newcommand{\im}{\text{im }}

\newcommand{\Hom}{\text{Hom}}

\newcommand{\Z}{\mathbb{Z}}
\newcommand{\C}{\mathbb{C}}

\newcommand{\oo}{\mathcal{O}}

\newcommand{\End}{\mathrm{End}}

\newcommand{\id}{\mathrm{id}}

\newcommand{\Ob}{\mathsf{Ob}}

\newcommand{\Cech}{\textit{\v{C}}}
\begin{document}
\author{Ville Nordström}
\title{On the map induced on Hochschild homology of matrix factorization categories by the inclusion of a divisor}
\date{}
\maketitle
\begin{abstract}Given a smooth variety $X$ over $\C$, a smooth divisor $i:Y\hookrightarrow X$ and a global function $f$ on $X$ which vanishes on $Y$ and on its critical locus we compute the map induced on Hochschild homology by the pushforward functor $i_*:D^b(Y)\to D^{abs}(MF(X,f))$ in terms of the Hochschild-Kostant-Rosenberg isomorphisms.
\end{abstract}

\section{Introduction}
For a smooth projective variety $X$ the classical Hirzebruch-Riemann-Roch theorem computes the Euler characteristic of a vector bundle in terms of its Chern character with values in singular cohomology. There is a version of Hirzebruch-Riemann-Roch that uses Hochschild homology instead of singular cohomology (see \cite{Cal2}, \cite{Markarian}, \cite{Ramadoss}). Key ingredients for this are Chern characters with values in Hochschild homology and the Mukai pairing. The HKR theorem, which states that for a non-singular variety $X$ we have
\begin{align}\label{HKR1}HH_i(X)\cong \bigoplus_{q-p=i}H^q(X,\Omega_X^p),\end{align}
 relates the two different settings. Using dg categories, an analogous picture has been developed for categories of matrix factorizations $MF(X,f)$ (see for example \cite{Kim-Pol}, \cite{Pol-Vaintrob}, \cite{Kim1}, \cite{Efimov}). When the function $f$ is assumed to vanish on its critical locus there is a HKR type theorem for the $\Z/2$-graded Hochschild homology of the dg category of matrix factorizations 
 \begin{align}\label{HKR2}HH_\bullet(MF(X,f))\simeq R\Gamma(X,(\Omega_X^\bullet,-df\wedge(-))).\end{align}
Given a morphism $\varphi:X\to Y$ between smooth projective varieties, the results in \cite{Ramadoss} and \cite{Cal2} allow us to understand the map in Hochschild homology induced by the pushforward functor $\varphi_*:D^b(\text{coh}Y)\to D^b(\text{coh} X)$ in terms of the HKR isomorphism and it is natural to try and understand this for matrix factorizations as well. In this paper we will consider the simplest case. Suppose $i:Y\hookrightarrow X$ is a smooth divisor of a smooth (not necessarily projective) variety $X$ and suppose $f\in H^0(X,\oo_X)$ is a regular function that satisfies $f|_Y=0$. Then we have a pushforward functor $D^b(\text{coh}Y)\to D^{abs}(MF(X,f))$ which lifts to a pseudo functor (see section \ref{pseudofunctorsbetweendgcats} for the definition of a pseudo functor) between their respective dg enhancements $\text{perf}_\Cech(Y)\to \text{vect}_\Cech(X,f)$ and so it induces a map in Hochschild homology 
\begin{align}\label{themap}
HH_\bullet(i_*):HH_\bullet(\text{perf}_\Cech(Y))\to HH_\bullet(\text{vect}_\Cech(X,f)).
\end{align}
Because the dg category of matrix factorizations is $\Z/2$-graded we only consider the groups above as $\Z/2$-graded vector spaces. We will denote by $D_{\Z/2}(\C)$ the derived category of $\Z/2$-graded vector spaces whose objects are $Z/2$-graded complexes of vector spaces and morphisms are chain maps where quasi isomorphisms have been inverted. The goal of this paper is to describe the map (\ref{themap}) in terms of the isomorphisms $(\ref{HKR1})$ and $(\ref{HKR2})$ above. We show that it can be computed in two steps. First, multiplication by the inverse Todd class of the line bundle $\oo_Y(-Y)$ realized in a Cech resolution of $\Omega^\bullet _Y$. Then, applying the connecting morphism $\delta$ coming from a short exact sequence of complexes of vector spaces obtained by taking Cech resolutions of the following short exact sequence of complexes of sheaves
$$0\to (\Omega_X^\bullet,-df\wedge(-))\to (\Omega_X^\bullet(\log Y),-df\wedge(-))\to (\Omega_Y^{\bullet-1},0)\to 0.$$
Our final formula is summarized in the following theorem.
\begin{theorem}
	There is a commutative diagram in $D_{\Z/2}(\C)$
	$$\begin{tikzcd}
		HH_\bullet(Y)\arrow{d}{\sim}\arrow{rrr}{HH_\bullet(i_*)}&&&HH_\bullet(X,f)\arrow{d}{\sim}\\
		\Cech(\mathscr{U},\Omega_Y^\bullet)\arrow{rr}{\bar{\wedge}Td(-Y)^{-1}}&&\Cech(\mathscr{U},\Omega_Y^\bullet)\arrow{r}{\delta}&\Cech(\mathscr{U},(\Omega_X^\bullet,-df\wedge-)).
	\end{tikzcd}$$
\end{theorem}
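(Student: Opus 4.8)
The plan is to reduce the statement to an explicit computation with the Čech enhancements and the Koszul matrix factorization of $\oo_Y$, and then to identify the Chern character of that kernel inside the residue sequence.

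\textbf{Step 1: reduce to a kernel computation.} I would first use the description from Section \ref{pseudofunctorsbetweendgcats} of how a pseudofunctor acts on Hochschild chains, so that $HH_\bullet(i_*)$ is presented by an explicit chain map between the bar-type complexes attached to $\text{perf}_\Cech(Y)$ and $\text{vect}_\Cech(X,f)$, compatible with the antisymmetrization maps realizing (\ref{HKR1}) and (\ref{HKR2}). Since $i_*$ is (the lift of) a Fourier--Mukai functor, this chain map is governed by its kernel: on an element of the cover $\mathscr U$ where $Y=\{t=0\}$ and $f=tg$, the pushforward is realized by tensoring with the Koszul matrix factorization $K=\{t,g\}$, i.e.\ $\oo_X\xrightarrow{g}\oo_X\xrightarrow{t}\oo_X$, a matrix factorization of $f$ with cokernel $\oo_Y$; globally $t\in H^0(X,\oo_X(Y))$, so $K$ is a twisted matrix factorization assembled along $\mathscr U$. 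By the push--pull formula for Fourier--Mukai functors, $HH_\bullet(i_*)$ is, under the two HKR identifications, entirely encoded by $\mathrm{ch}(K)\in HH_\bullet(Y)^\vee\otimes HH_\bullet(X,f)$, so the theorem becomes a concrete identification of this class.

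\textbf{Step 2: the affine local model.} I would treat first the affine case $X=\Spec R$, $Y=\Spec R/(t)$, $f=tg$, where all Hochschild homologies and their HKR isomorphisms are computed by explicit antisymmetrization maps and $i_*$ corresponds to tensoring the Koszul complex of $R/(t)$ with $K$. A direct computation of the categorical Chern character of $K$ --- via a connection and its curvature on $K$, or via the known HKR formula for matrix factorizations --- should produce the class $Td(\oo_Y(-Y))^{-1}$, regarded as living in $\Omega_Y^\bullet$ through the residue map $\Res:\Omega_X^\bullet(\log Y)\to\Omega_Y^{\bullet-1}$ and then pushed into $(\Omega_X^\bullet,-df\wedge-)$ by the connecting map of the residue sequence. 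The key inputs are that $\tfrac{dt}{t}$ is a logarithmic form with residue $1\in\Omega_Y^0$, that $-df\wedge-$ annihilates the residue because $f|_Y=0$, and that the curvature of the natural connection on $K$ only sees the conormal direction $\oo_Y(-Y)=N_{Y/X}^\vee$ (the $g$-direction being the ``matrix-factorization direction,'' not a geometric normal direction), which is what singles out the Todd class of the conormal line bundle.

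\textbf{Step 3: globalize along $\mathscr U$.} With the local identity in hand, I would check that every arrow in the diagram --- $i_*$ on enhancements, the two HKR isomorphisms, wedging with a Čech representative of $Td(\oo_Y(-Y))^{-1}$, and the connecting morphism $\delta$ --- is compatible with restriction to opens of $\mathscr U$ and with the Čech assembly, so the local identity propagates to a homotopy-commutative square of Čech bicomplexes. The residue short exact sequence of complexes of sheaves globalizes verbatim, and taking Čech resolutions produces the short exact sequence of complexes of vector spaces whose connecting map is the $\delta$ of the statement; the degree shift $\Omega_Y^\bullet\leadsto\Omega_Y^{\bullet-1}$ built into the residue sequence accounts for the reindexing implicit in the bottom row.

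\textbf{Expected main obstacle.} The delicate part is Step 2: pinning down the precise Todd factor --- that it is the full inverse Todd class (no square root) of the \emph{conormal} bundle, with the correct sign --- from the Chern character of the Koszul matrix factorization, and matching all sign and shift conventions so that the pseudofunctor's action on Hochschild chains is \emph{literally} ``$\bar\wedge\,Td(-Y)^{-1}$ followed by $\delta$'' and not merely an agreement up to an undetermined scalar or an unidentified universal characteristic class. Because $X$ is not assumed projective, one cannot fall back on Serre duality or the Mukai pairing to fix such ambiguities, which is precisely why the computation must be carried out honestly at the chain level using the Čech/pseudofunctor machinery of the earlier sections.
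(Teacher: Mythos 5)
Your proposal correctly identifies the main geometric actors — the Koszul matrix factorization $P=\{x,g\}$ resolving $i_*\oo_Y$, the residue sequence (\ref{SES}) whose connecting map is $\delta$, and the inverse Todd class of the conormal bundle $\oo_Y(-Y)$ — and the overall shape (local computation, then Čech assembly) matches the paper. But there are two genuine gaps. First, your Step 1 reduces $HH_\bullet(i_*)$ to the class $\mathrm{ch}(K)\in HH_\bullet(Y)^\vee\otimes HH_\bullet(X,f)$ by invoking ``the push--pull formula for Fourier--Mukai functors.'' That formula presupposes a K\"unneth decomposition and a duality $HH_\bullet(\mathscr{C}^{op})\cong HH_\bullet(\mathscr{C})^\vee$, i.e.\ exactly the Mukai-pairing/Serre-duality package that you yourself note is unavailable here because $X$ is not assumed proper. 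The paper never makes this reduction: it replaces $\underline{i}_*$ by the honest dg functor $-\otimes_\cala P$ (via the sheaf of dg algebras $\cala=[\oo_X(-Y)\to\oo_X]$ and the functor $\mathrm{Ind}$ of Lemma \ref{indfunctor}), and then tracks the induced map on Hochschild \emph{chains} directly through a trace map $\phi$ (Definition \ref{defphi}) — no kernel calculus is needed. As written, your Step 1 rests on machinery that is not established in this setting.

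Second, your Step 2 — the actual identification of the local answer with $\bar\wedge\,Td(-Y)^{-1}$ followed by $\delta$ — is asserted (``should produce the class'') rather than carried out, and you explicitly flag it as the expected main obstacle without resolving it. This is precisely where the paper's work lies: one needs the intermediate complex $\Omega_\cala^\bullet=\mathrm{Cone}(L)$ and the chain map $HKR_\cala$ (Proposition \ref{proponhkra}) to even state a chain-level comparison, and then the explicit computation of Lemma \ref{diagram1}, where the powers of $c_1(-Y)$ arise from the change-of-trivialization cocycles $g_{ij}=\mathrm{diag}(1,u_{ij})$ of $P=\oo_X(-Y)\oplus\oo_X$ inside the trace map, and the denominators $\tfrac{1}{(q+1)!}$ that distinguish $Td^{-1}=\sum c_1^{q}/(q+1)!$ from $\exp(c_1)$ come out of the shuffle combinatorics of $\phi$. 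Your heuristic that ``the curvature of the natural connection on $K$ only sees the conormal direction'' points at the right answer but does not substitute for this computation; in particular it does not by itself rule out an extra universal factor or a square root of the Todd class, which is the very ambiguity you acknowledge cannot be fixed by duality arguments here. So the proposal is a reasonable plan, but the theorem's substance — both the chain-level model for $i_*$ and the Todd-class identification — remains unproved in it.
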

 
\subsection{Summary of paper}
Here is a summary of the paper. In chapter 2 we collect some facts about cdg categories and Hochschild homology of such. In section \ref{presheavesofcdg} we recall the notion of presheaves of cdg and dg categories. We introduce a notion of lax morphisms of presheaves of dg categories and explain how they give rise to a morphisms on Hochschild homology after passing to a Cech resolution.

In chapter 3 we recall how the Hochschild homology of $MF(X,f)$ is computed. We follow the computation in \cite{Efimov} closely. The only difference is that we use Cech instead of Godement resolutions as we find them better suited for computations.

In chapter 4 we explain the main problem in more detail and prove the theorem above. In section 4.5 we define a trace map similar to that in \cite{Kim1}, proposition 5.3, except we only consider Hochschild homology and not Cyclic homology. Our trace map is also different in that it uses Cech resolutions instead of Godement resolutions and its target is different.

In section \ref{maintheorem} we prove the theorem above. The main commutative diagram that goes into proving the theorem above is lemma \ref{diagram1}.

\subsection{Notation and conventions}\label{notation}
We work over $\C$ and all varieties, vector spaces and algebras will be over $\C$.

If $(\mathcal{F}^\bullet,d_{\mathcal{F}})$ is a complex of sheaves on a scheme $X$, and $\mathscr{U}$ is an affine open covering of $X$ then we make $Tot(\Cech{^\bullet}(\mathscr{U},\mathcal{F}^\bullet))$ into a complex by taking as our differential $d_{Cech}+\bar{d}_\mathcal{F}$ where $\bar{d}_\mathcal{F}$ is defined on $\alpha\in \Cech{^p}(\mathscr{U},\mathcal{F}^q)=(-1)^pd_\mathcal{F}(\alpha)$. 

If there is a multiplication defined $\mathcal{F}^\bullet\otimes \mathcal{G}^\bullet\to \mathcal{H}^\bullet$ we will use this to define a product 
$$\Cech(\mathscr{U},\ff^\bullet)\otimes \Cech(\mathscr{U},\GG^\bullet)\to \Cech(\mathscr{U},\hh^\bullet)$$
by the rule 
$$(f\cdot g)_{i_0\cdots i_{p+q}}:=f_{i_0\cdots i_p}g_{i_p\cdots i_{p+q}}$$
where $f\in \Cech^p(\mathscr{U},\ff^\bullet)$ and $g\in \Cech^q(\mathscr{U},\GG^\bullet)$.

We will denote by $D^b(\text{coh}X)$ the usual derived category of coherent sheaves on $X$ whose objects are bounded $\Z$-graded complexes. However, we will mostly be concerned with $\Z/2$-graded dg (or cdg)  categories. Categories that are usually $\Z$ graded, for example a dg enhancement $\mathscr{D}^b(\text{coh}(X))$ of $D^b(\text{coh}X)$ will be implicitly thought of as $\Z/2$-graded by forgetting the $\Z$-grading on the hom-spaces (but not on the objects!) and only remembering the parity of the degree of a morphism. 

\subsection{Acknowledgments}
I wish to thank my advisor Alexander Polishchuk for all his help with this project.

\section{Background on cdg categories, modules and Hochschild invariants}
Here, we recall some facts about cdg categories and Hochschild type invariants of such. We will mostly follow the conventions in \cite{Kim1} and also in \cite{Pol-Pol}. 

\subsection{Cdg categories and functors} Recall that a \textit{curved differential graded category}  $\mathscr{C}$(\textit{cdg category} for short) is a triple $(\mathscr{C},d,h)$ where $\mathscr{C}$ is a category enriched over $\Z/2$-graded vector spaces, $d$ is an assignment of a degree $1$ operator (called \textit{differential}) $d_{x,y}:\Hom_\mathscr{C}(x,y)\to\Hom_\mathscr{C}(x,y)[1] $ for every pair of objects $x$ and $y$ in $\Ob(\mathscr{C})$ and $h$ is the assignment of a degree $2$ endomorphism $h_x\in \End_\mathscr{C}(x)$ (called \textit{curvature}) for every object $x\in \Ob(\mathscr{C})$. This structure $d$ and $h$ must satisfy the following, $d$ is a derivation for the composition, $d_{x,x}(h_x)=0$ and $d_{x,y}^2(f)=h_y\circ f-f\circ h_x$
for all $f\in \Hom_\mathscr{C}(x,y)$. Here are some examples that will be important to us.

\begin{ex}
A dg category can be thought of as a cdg category with $h_x=0$ for all objects $x$.
\end{ex}

\begin{ex}\label{curvedalgebra}
Let $A$ be a graded algebra and let $h$ be an element of the center of $A$. Then $(A,0,h)$ is a cdg algebra with only one object. 	
\end{ex}

\begin{ex}
Let $Com_{cdg}(k)$ denote the cdg algebra whose objects are pairs $(C^\bullet,\delta_C)$ where $C^\bullet$ is a graded \text{vect}or space and $\delta_C:C^\bullet\to C^\bullet[1]$ is a degree 1 operator. We define $d_{C,C'}(f):=\delta_{C'}\circ f-(-1)^{|f|}f\circ \delta_C$ and $h_C:=\delta_C^2$. If $(C^\bullet,\delta_C)$ is an object of $Com_{cdg}(k)$ we define the shift $(C^\bullet[1],\delta_C[1]=-\delta_C)$ as usual.
\end{ex}

Let $\mathscr{C}$ and $\mathscr{C}'$ be two cdg categories. A functor $F:\mathscr{C}\to\mathscr{C}'$ of categories enriched over graded \text{vect}or spaces is called a \textit{quasi cdg functor} if
	$F_{x,y}\circ d_{x,y}=d_{F(x),F(y)}\circ F_{x,y}$ for all objects $x$ and $y$.
If in addition we have $F(h_x)=h_{F(x)}$ for all objects $x$ then we call $F$ a \textit{cdg functor}. 

\begin{rmk}What we just defined is sometimes called a strict functor but we omit the prefix strict since we will not need the notion of "non-strict" cdg functors.	 
\end{rmk}

\textit{Cdg (quasi-)modules} over a cdg category $\mathscr{C}$ are by definition (quasi) cdg functors $F:\mathscr{C}\to Com_{cdg}(k)$. If $F$ is a quasi functor we denote by $F[n]$ the quasi functor obtained by composing $[n]\circ F$. The collection of quasi modules themselves form a cdg category that we will denote by $mod^{qdg}-\mathscr{C}$. The objects are quasi modules. A degree $n$ morphisms $\alpha\in \Hom_{mod^{qdg}-\mathscr{C}}^n(F,F')$ is a natural transformation $\alpha:F\implies F'[n]$ between functors of categories enriched over graded \text{vect}or spaces. The differentials $d_{F,F'}$ are given by $d_{F,F'}(\alpha)_x:=\delta_{F'(x)}\circ \alpha_x-(-1)^n\alpha_x\circ \delta_{F(x)}$ and the curvatures are by definition $h_F(x):=h_{F(x)}-F(h_x)$.	
The full subcategory of modules inside the cdg category of quasi modules forms a dg-category which we denote by $mod^{cdg}-\mathscr{C}$. If $\mathscr{C}$ is a dg category we will write $mod^{dg}-\mathscr{C}$ for the dg category $mod^{cdg}-\mathscr{C}$. 

\subsection{Two kinds of Hochschild homology}
Let $\mathscr{C}$ be a cdg category. We define the Hochschild complex of the first kind $C_\bullet(\mathscr{C})$ as follows. As a $\Z/2$-graded vector space it is obtained by taking the direct sum along the diagonal of a bigraded vector space with one grading by $\Z$ and one by $\Z/2$. The $\Z/2$-graded vector space in degree $n$ is
\begin{align*}
	C_n(\mathscr{C})=\bigoplus_{X_0,...,X_n\in \Ob(\mathscr{C})}\Hom(X_0,X_1)\otimes\Hom(X_1,X_2)\otimes...\otimes\Hom(X_n,X_0).
\end{align*}
The differential is the sum of three differentials on $C_\bullet(\mathscr{C})$:
\begin{align*}
\begin{split}
	d_0\big(a_0[a_1|\cdots |a_k]\big)&=\sum_{i=0}^k(-1)^{|a_0|+...+|a_i|+i}a_0[\cdots|a_i|h|a_{i+1}|\cdots]\\
	d_1\big(a_0[a_1|\cdots |a_k]\big)&=\sum_{i=0}^k(-1)^{|a_0|+...+|a_{i-1}|+i}a_0[\cdots|da_i|\cdots]\\
	d_2\big(a_0[a_1|\cdots |a_k]\big)&=(-1)^{|a_0|}a_0a_1[a_2|\cdots|a_k]\\
	&+\sum_{i=1}^{k-1}(-1)^{|a_0|+...+|a_i|+i}a_0[\cdots|a_ia_{i+1}|\cdots]\\
	&+(-1)^{1+(|a_k|+1)(|a_0|+...+|a_{k-1}|+k-1)}a_ka_0[a_1|\cdots|a_{k-1}]
	\end{split}
\end{align*}
The Hochschild homology $HH_\bullet(\mathscr{C})$ is the object in $D_{Z/2}(\C)$ represented by $C_\bullet(\mathscr{C})$.
Although Hochschild homology is an important invariant of dg categories it turns out that for general cdg categories it is not very useful. For example if $h\neq 0$ in example \ref{curvedalgebra} then $HH_\bullet(A,0,h)=0$ (\cite{Cal1}, theorem 4.2). There is a notion of Hochschild homology of the second kind which in some ways is a better invariant for cdg categories. The Hochschild complex of the second kind associated to a cdg category $\mathscr{C}$, denoted $C_\bullet^{II}(\mathscr{C})$, is defined from the same bigraded vector space as for the first kind but we take direct products along the diagonal instead of direct sum. The differential is given by the same formulas as for the first kind.

There is a canonical map $C_\bullet(\mathscr{C})\to C_\bullet^{II}(\mathscr{C})$ and it's an interesting question when it is an isomorphism. 
Both Hochschild complexes of the first and second kind are functorial with respect to cdg functors as defined in this text.

\subsection{Pseudo equivalences and projective modules}
Let $\mathscr{D}$ be a cdg category. We say that an object $x\in \Ob(\mathscr{D})$ is the \textit{direct sum} of two objects $y\oplus z$ if there are degree zero morphisms $\iota_y:y\to x$ and $\iota_z:z\to x$ which induce isomorphisms of graded groups $\Hom(y,w)\times \Hom(z,w)\cong \Hom(x,w)$ and we require that $d(\iota_y)=0$ and $d(\iota_z)=0$. 

If $y\in \Ob(\mathscr{D})$ and $\tau\in \Hom^1(y,y)$ we say that an object $x\in \Ob(\mathscr{D})$ is a \textit{twist} of $y$ by $\tau$ (and we write $x=y(\tau)$) if there are degree $0$ morphisms $i:y\to x$ and $j:x\to y$ such that $ij=\id_x$, $ji=\id_y$ and $jd(i)=\tau$.

Finally we call an object $x\in \Ob(\mathscr{D})$ a \textit{degree $n$ shift} of another object $y\in \Ob(\mathscr{D})$ if there are morphisms $i:y\to x$ and $j:x\to y$ of degree $n$ and $-n$ respectively such that $ij=\id_x$, $ji=\id_y$, $d(i)=0$ and $d(j)=0$.

We call a cdg functor $F:\mathscr{C}\to \mathscr{D}$ \textit{pseudo equivalence} if it is fully faithful and any object in $\mathscr{D}$ can be obtained from objects in the image of $F$ in finitely many steps by taking direct sums, twsting, shifting or passing to direct summands. 
\begin{prop}[\cite{Pol-Pol}]\label{proponpseudoequivalences0}
	If $F:\mathscr{C}\to \mathscr{D}$ is a pseudo equivalence, then the induced map on Hochschild homology of the second kind is a quasi isomorphism. \end{prop}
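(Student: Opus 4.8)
The plan is to perform a d\'evissage along the generation process and reduce to the four elementary operations, for each of which one exhibits an explicit chain homotopy on the Hochschild complex of the second kind. First I would replace $\mathscr{C}$ by the full cdg subcategory $\mathscr{C}'\subseteq\mathscr{D}$ on the objects $F(x)$: since $F$ is fully faithful it factors through $\mathscr{C}'$, and collapsing objects that $F$ may identify only changes $C_\bullet^{II}$ up to a deformation retraction (this is the trivial instance of the shift/twist argument below), so we may assume $\mathscr{C}\subseteq\mathscr{D}$ is full and that every object of $\mathscr{D}$ is built from $\Ob(\mathscr{C})$ in finitely many steps of direct sum, twist, shift and passage to a direct summand. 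It then suffices to understand, for a full cdg subcategory $\mathscr{D}_0\subseteq\mathscr{D}$ and a single object $x$ obtained from objects of $\mathscr{D}_0$ by one such operation, the inclusion $C_\bullet^{II}(\mathscr{D}_0)\hookrightarrow C_\bullet^{II}(\mathscr{D}_0\cup\{x\})$.

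For a shift $x=y[n]$ the morphisms $i\colon y\to x$ and $j\colon x\to y$ are closed isomorphisms of the underlying graded hom-spaces, so conjugation by $i$ and $j$ identifies every tensor slot involving $x$ with the corresponding one involving $y$ compatibly with $d$, and the inclusion of Hochschild complexes is a deformation retract. For a twist $x=y(\tau)$ one proceeds the same way, the difference being that conjugation no longer commutes with the differential: the discrepancy is the inner derivation by $\tau$, which is absorbed into an explicit contracting homotopy on the summand of $C_\bullet^{II}(\mathscr{D}_0\cup\{x\})$ spanned by chains that involve $x$. Since these retractions and homotopies act one tensor factor at a time, they assemble over the infinite product defining $C_\bullet^{II}$ without difficulty.

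The substantive cases are the direct sum and the direct summand, which together encode the Morita invariance of Hochschild homology of the second kind. If $x$ is a direct summand of $y\in\mathscr{D}_0$ we obtain closed degree-zero maps $\iota\colon x\to y$, $\pi\colon y\to x$ with $\pi\iota=\id_x$ and a closed idempotent $e=\iota\pi\in\End^0(y)$. I would define a retraction $C_\bullet^{II}(\mathscr{D}_0\cup\{x\})\to C_\bullet^{II}(\mathscr{D}_0)$ by replacing every occurrence of $x$ in a chain $a_0[a_1|\cdots|a_k]$ by $y$, inserting $\iota$ and $\pi$ around the relevant slots, with the standard signs handling the cyclic position; closedness of $e$ makes this a chain map, its composite with the inclusion is the identity, and the composite in the other order is homotopic to the identity via the usual cyclic-bar simplicial homotopy built from $\iota$, $\pi$ and $e$. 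The direct sum case is the special case $y=x\oplus z$ of this, applied to both summands. As before the formulas are factorwise, so they make sense over the product.

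The step that needs the most care, and the one I expect to be the main obstacle, is the transfinite bookkeeping. Because $C_n^{II}(\mathscr{D})=\prod_{X_0,\dots,X_n}\Hom(X_0,X_1)\otimes\cdots\otimes\Hom(X_n,X_0)$ is an infinite product it does not commute with filtered colimits, so one cannot simply pass to the colimit over the stages of the generation. Instead, since each object of $\mathscr{D}$ is reached from $\Ob(\mathscr{C})$ in finitely many steps, every finite tuple of objects is cleaned up after finitely many of the elementary retractions above, so their (generally infinite) composite is well defined on each factor of the product, and likewise for the homotopies; this should yield a genuine deformation retraction of $C_\bullet^{II}(\mathscr{D})$ onto $C_\bullet^{II}(\mathscr{C})$. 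Verifying that this composite retraction and homotopy are well defined and satisfy the homotopy identity, uniformly over the product, is the point to be checked carefully; granting it, the proposition follows.
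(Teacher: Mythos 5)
This proposition is imported from \cite{Pol-Pol}; the paper gives no proof of its own, so the only comparison available is with that reference. Your d\'evissage into the four elementary operations, with an explicit retraction and homotopy for each and a final assembly over the generation process, is indeed the strategy used there, and your treatment of the shift, direct sum and direct summand steps is essentially right (in particular the closed projections you need in the summand/sum cases do exist: from $\Hom(y,w)\times\Hom(z,w)\cong\Hom(x,w)$ one gets $\pi_y$ with $d(\pi_y)\iota_y=d(\pi_y)\iota_z=0$, hence $d(\pi_y)=0$).

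The genuine gap is the twist case, which is where the entire content of the proposition lives. If $x=y(\tau)$ then $d(i)=i\tau$, so conjugation by $i$ and $j$ identifies $\Hom(x,w)$ with $\Hom(y,w)$ carrying the \emph{twisted} differential $g\mapsto d(g)\pm g\tau$, and the comparison between the Hochschild complex built from the twisted differentials and the untwisted one is not a conjugation plus a bounded ``contracting homotopy on the chains involving $x$'': both the comparison map and the homotopy are infinite series over all insertions of $\tau$ into the bar slots, of the shape $a_0[a_1|\cdots|a_k]\mapsto\sum_{n}\pm\, a_0[\cdots|\tau|\cdots|\tau|\cdots]$, which raise the bar degree without bound and converge only because $C_\bullet^{II}$ is totalized by direct products (compare the operator $\sum_n(-1)^n Sh([\delta|\cdots|\delta],-)$ in Definition \ref{defphi}, which is exactly such a series). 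This cannot be waved through as ``assembling without difficulty'': the same four operations applied to the first-kind complex fail to give a quasi-isomorphism in general (e.g.\ $HH_\bullet(A,0,h)=0$ for $h\neq 0$ while the second-kind homology of the pseudo-equivalent module category is nontrivial), and the twist is the only one of the four operations where the two kinds diverge, so any correct proof of this step must use the completeness of the product totalization in an essential way. Your closing worry about the infinite composite of retractions is real but secondary — each retraction is the identity on bar components not involving the object being removed, and the bar-degree-$\le N$ part of any element involves finitely many objects each generated in finitely many steps, so the composite is defined componentwise — but one cannot even set that up until the twist retraction is written in its correct, unbounded form.
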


For a cdg category $\mathscr{C}$ we denote by $\mathscr{C}^\#$ the underlying category enriched in graded \text{vect}or spaces. The category of graded modules on $\mathscr{C}$ (i.e. graded functors from $\mathscr{C}^\#$ to graded vector spaces) is an abelian category. The projective modules are precisely those that are direct summands of a direct sum of representable modules. We say that a module $P$ is \textit{finitely generated projective} if it is a direct summand of a finite direct sum of representable functors. Let's denote by $mod^{qdg}_{fgp}-\mathscr{C}$ the full subcategory of $mod^{qdg}-\mathscr{C}$ consisting of  quasi modules that are finitely generated projective as $\mathscr{C}^\#$-modules. Let $mod^{cdg}_{fgp}-\mathscr{C}$ denote the full subcategory of $mod^{qdg}_{fgp}-\mathscr{C}$ whose curvature vanishes. Let $\mathscr{C}=(A,0,h)$ and $\mathscr{C}'=(A,0,-h)$. We have a Yoneda functor $R:\mathscr{C}'\to mod^{qdg}_{fgp}-\mathscr{C}$ which is a cdg functor. We also have the inclusion functor $I:mod^{cdg}_{fgp}-\mathscr{C}\to mod^{qdg}_{fgp}-\mathscr{C}$.
\begin{prop}[\cite{Pol-Pol}, section 2.6]\label{proponpseudoequivalences} The functors $R$ and $I$ are pseudo equivalences and hence induce isomorphisms in Hochschild homology of the second kind.
	
\end{prop}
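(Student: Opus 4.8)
The statement has two parts: that $R$ and $I$ are pseudo equivalences, and that they therefore induce isomorphisms on Hochschild homology of the second kind. The second part is exactly Proposition~\ref{proponpseudoequivalences0}, so the plan is to verify, for each of $R$ and $I$, the two conditions defining a pseudo equivalence — fully faithfulness, and that every object of the target is obtained from objects in the image in finitely many steps by direct sums, twists, shifts and passages to direct summands. Write $\rho_h$ for the degree-$2$ operator by which $h$ acts on whichever module is under discussion.

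Fully faithfulness is the easy half. For $I$ there is nothing to do: it is the inclusion of a full subcategory, and it is a cdg functor because vanishing of the curvature is a condition on objects only. For $R$, the comparison map on hom-spaces is the graded Yoneda isomorphism; one checks directly, from the formula for the differential $d_{F,F'}$ on $mod^{qdg}-\mathscr{C}$, that it intertwines differentials, and that $R$ preserves curvatures. The latter is the reason the source of $R$ is $\mathscr{C}'=(A,0,-h)$ rather than $\mathscr{C}$: by the formula $h_F=h_{F(x)}-F(h_x)$, the free rank-one module $P_0=R(\ast)$ has curvature $-\rho_h$, which is $R$ applied to the curvature $-h$ of the unique object of $\mathscr{C}'$.

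The generation condition is where the work is. As $\mathscr{C}$ has one object, an object $P$ of $mod^{qdg}_{fgp}-\mathscr{C}$ is a finitely generated projective graded $A$-module $P^\#$ together with a degree-one operator $\delta_P$ graded-commuting with the $A$-action, its curvature being $h_P=\delta_P^2-\rho_h$. For $R$: choose a finite direct sum $F^\#$ of shifts of the free module together with a degree-$0$ idempotent $e$ on $F^\#$ whose image is $P^\#$, and extend $\delta_P$ by zero on $(1-e)F^\#$ to a degree-one operator $\tilde\delta$ on $F^\#$. Then $e$ is $\tilde\delta$-closed, so it exhibits $P$ as a direct summand of the twist of $F=\bigoplus_j P_0[m_j]$ by $\tilde\delta$; hence $P$ is built from $P_0$ by shifts, a direct sum, a twist and a passage to a summand.

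For $I$ this argument fails, since a twist of a curvature-zero module need not have vanishing curvature, so one must symmetrize. Let $Q$ be the graded module $P^\#\oplus P^\#[1]$ equipped with the degree-one operator $\Delta=\left(\begin{smallmatrix}\delta_P & -h_P\\ \mathrm{id} & -\delta_P\end{smallmatrix}\right)$. Using that $\delta_P$ graded-commutes with the $A$-action — hence with $\rho_h$ and with $h_P=\delta_P^2-\rho_h$ — one checks that $Q$ is an object of $mod^{qdg}_{fgp}-\mathscr{C}$ and that $\Delta^2=\rho_h$, so $Q$ has vanishing curvature and lies in $mod^{cdg}_{fgp}-\mathscr{C}$. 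Moreover $Q$ is manifestly the twist of the block-diagonal module $P\oplus P[1]$ by the off-diagonal part of $\Delta$. Since a twist is reversible — if $x=y(\tau)$ then $y=x(\tau')$ with $\tau'=i\,d(j)$ — it follows that $P\oplus P[1]$ is a twist of $Q$, and $P$ is a genuine direct summand of the block-diagonal $P\oplus P[1]$; thus $P$ is obtained from $Q\in mod^{cdg}_{fgp}-\mathscr{C}$ by a twist and a passage to a summand, proving the generation condition for $I$. Together with Proposition~\ref{proponpseudoequivalences0} this gives the statement. The main obstacle is exactly this step: the summand argument that works for $R$ breaks down because of the curvature, so one must introduce the symmetrized module $Q$ and invoke the invertibility of twists; the single genuinely computational point is the identity $\Delta^2=\rho_h$, where the graded-commutation of $\delta_P$ with the action enters.
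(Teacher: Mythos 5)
Your argument is correct and is essentially the standard one: the paper itself gives no proof of this proposition but simply cites \cite{Pol-Pol}, Section 2.6, and your reconstruction (Yoneda fully faithfulness, the closed-idempotent/twist argument exhibiting any f.g.\ projective quasi-module as a summand of a twist of a free module for $R$, and the doubled module $P^\#\oplus P^\#[1]$ with $\Delta^2=\rho_h$ together with reversibility of twists for $I$) is exactly the mechanism used there. No gaps worth flagging.
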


\subsection{Pseudo functors between dg categories}\label{pseudofunctorsbetweendgcats}
Let $\mathscr{C}$ be a dg category. The categories $Z^0(\mathscr{C})$ or $H^0(\mathscr{C})$ are obtained from $\mathscr{C}$ by taking degree zero cycles or by taking zeroth homology of all the hom-complexes in $\mathscr{C}$ respectively. Recall that the module category $mod^{dg}-\mathscr{C}$ is again a dg category and $H^0(mod^{dg}-\mathscr{C})$ has a standard triangulated structure. If we localize $H^0(mod^{dg}-\mathscr{C})$ with respect to quasi isomorphisms we get the \textit{derived category} $D(\mathscr{C})$. Using Drinfeld quotients (\cite{Drinfeld}) we get a dg enhancement of $D(\mathscr{C})$, which we denote $\mathscr{D}(\mathscr{C})$ whose objects are the same as $mod^{dg}-\mathscr{C}$. We call an object in $mod^{dg}-\mathscr{C}$ perfect if it lies in the smallest thick (meaning closed under passage to a direct summand) triangulated subcategory of $D(\mathscr{C})$ which contains the image of the Yoneda functor $H^0(\mathscr{C})\to D(\mathscr{C})$. Let $\text{perf}(\mathscr{C})\subset \mathscr{D}(\mathscr{C})$ be the full sub dg category on the perfect objects in $mod^{dg}-\mathscr{C}$. If $\mathscr{C}$ is pre-triangulated (meaning $H^0(\mathscr{C})$ is triangulated in a way compatible with the Yoneda embedding $H^0(\mathscr{C})\to H^0(mod^{dg}-\mathscr{C})$) and idempotent complete then the Yoneda functor $\mathscr{C}\to \text{perf}(\mathscr{C})$ is a quasi equivalence. If $\mathscr{C}$ and $\mathscr{C}'$ are two dg categories, both pre-triangulated then we define a \textit{pseudo functor} from $\mathscr{C}$ to $\mathscr{C}'$ to mean a dg functor $F:\mathscr{C}\to \text{perf}(\mathscr{C}')$. We will denote a pseudo functor by a dashed arrow $F:\mathscr{C}\dashrightarrow \mathscr{C}'$. By a theorem of Keller, (theorem 2.4 in \cite{Keller}), if $\mathscr{C}$ is a pretriangulated dg category then the Yoneda functor induces an isomorphism on Hochschild homology $C_\bullet(\mathscr{C})\to C_\bullet (\text{perf}(\mathscr{C}))$ and therefore any pseudo functor $F:\mathscr{C}\dashrightarrow \mathscr{C}'$ induce a well defined map in $D(k)$, $C_\bullet(\mathscr{C})\to C_\bullet(\mathscr{C}')$.

\subsection{Presheaves of cdg categories}\label{presheavesofcdg}
We will also need presheaf versions of the Hochschild complexes. A \textit{presheaf of cdg categories} $\underline{\mathscr{E}}$ on a scheme $X$ is a contravariant functor from the category of open subsets of $X$ and inclusions of such to the category of small cdg categories with cdg functors as morphisms. If for all $U$ $\underline{\mathscr{E}}(U)$ has only one object we call it a presheaf of cdg algebras and if $U\mapsto \End(\underline{\mathscr{E}}(U))$ is a sheaf we call it a sheaf of cdg algebras. If for all $U$ $\underline{\mathscr{E}}(U)$ is a dg category then we call it presheaf of dg categories
Here is an example that will be important to us later.
\begin{ex}
If $f$ is a global function on $X$ then $U\mapsto (\oo_X(U),0,f|_U)$ is a sheaf of cdg algebras which we will denote by $\oo_f$.	
\end{ex}

A \textit{morphism of presheaves of cdg categories} $\varphi:\underline{\mathscr{E}}\to \underline{\mathscr{E}}'$ is the data a cdg functor $\varphi_U:\underline{\mathscr{E}}(U)\to \underline{\mathscr{E}}'(U)$ for each open set $U\subset X$ such that whenever $V\subset U\subset X$ the following diagram commutes on the nose
$$\begin{tikzcd}
\underline{\mathscr{E}}(U)\arrow{r}{\varphi_U}\arrow{d}{res}&\underline{\mathscr{E}}'(U)\arrow{d}{res}\\
\underline{\mathscr{E}}(V)\arrow{r}{\varphi_V}&\underline{\mathscr{E}}'(V).
\end{tikzcd}
$$

If $\underline{\mathscr{E}}$ is a presheaf of cdg categories we define $\underline{C}_\bullet^{II}(\underline{\mathscr{E}})$ to be the presheaf of $\Z/2$-graded chain complexes
\begin{align*}
\underline{C}_\bullet^{II}(\underline{\mathscr{E}})(U):= C_\bullet^{II}(\underline{\mathscr{E}}(U)).
\end{align*}
Similarly one defines $\underline{C}_\bullet(\underline{\mathscr{E}})$. Any morphism $\varphi:\underline{\mathscr{C}}\to \underline{\mathscr{D}}$ of presheaves of cdg categories gives rise to a map of presheaves of complexes $\varphi_*:\underline{C}_\bullet^{?}(\underline{\mathscr{C}})\to \underline{C}_\bullet^?(\underline{\mathscr{D}})$ (where $?=I, II$). 

We will need a weaker notion of morphism between presheaves of dg categories where the dg functors are only asked to commute with the restriction functors up to compatible natural isomorphisms. Here is the exact definition.
Let $\underline{\mathscr{C}}$ and $\underline{\mathscr{D}}$ be two presheaves of dg categories on a scheme $X$.
\begin{deff}
	A lax morphism $\varphi:\underline{\mathscr{C}}\to \underline{\mathscr{D}}$ is a collection of dg functors $\{\varphi_U:\underline{\mathscr{C}}(U)\to \underline{\mathscr{D}}(U)| \ U\subset  X\text{ open}\}$ together with isomorphisms of functors $\{\alpha_{UV}:\varphi_{V}\circ \Res_{UV}\implies \Res_{UV}\circ \varphi_U| \ V\supset U\supset X \text{ open }\}$ which satisfies the following cocycle condition, for all $c\in \Ob(\underline{\mathscr{C}})$ and all $U\supset V\supset W$ we have
	$$\alpha_{UW,c}= \Res_{VW}(\alpha_{UV,c})\circ \alpha_{VW,\Res_{UV}(c)}$$
\end{deff}
Suppose $X=\cup_{i=1,...,n}U_i$ is a finite union of affine schemes. We will write $\Cech(\mathscr{U},\underline{C}_\bullet(\underline{\mathscr{C}}))$ for the total complex obtained by taking the total complex of the bicomplex $\Cech^p(\mathscr{U},\underline{C}_{-q}(\underline{\mathscr{C}}))$ (see section \ref{notation} for our conventions on the differentials in this situation). We will see that lax morphisms induce chain maps on Cech-Hochschild complexes associated to a presheaf of dg categories as above. To understand this we first introduce some notation. Let $(\varphi,\alpha)$ be a lax morphism. If $I=\{i_0<...<i_p\}$, $J=\{j_1,...,j_q\}$ and $J\cap I=\emptyset$ then we set $J_s:=\{j_1,j_2,...,j_s,i_0,...,i_p\}$ for $0\leq s\leq q$. With this notation we define the following maps $h^{q}:\Cech(\mathscr{U},\underline{C}_\bullet(\underline{\mathscr{C}}))\to \Cech(\mathscr{U},\underline{C}_\bullet(\underline{\mathscr{D}}))$ by 
\begin{align*}
C_k(\underline{\mathscr{C}}(U_{I}))\ni a_0[a_1|\cdots |a_k]\mapsto &\\
\sum_J (-1)^\epsilon\alpha\circ\varphi(a_0|_{J_q})\Big[\cdots\Big|\alpha^{-1}\Big|\varphi(a_{l_1+1}|_{J_{q-1}})|_{J_q}\Big|\cdots\Big|\alpha^{-1}\Big|\varphi(a_{l_q+1})|_{J_q}\Big|\cdots\Big]\\
\in \bigoplus_{J}C_{k+q}(\underline{\mathscr{C}}(U_{I\cup J})).
\end{align*}
\begin{align}\label{laxhomotopy}	
\end{align}
where the sum is over all ordered $J=(j_1,...,j_q)$ disjoint from $I$ and all $0\leq l_1\leq l_2\leq ...\leq l_q\leq k$. The sign is given by
$$\epsilon=(|a_0|+...+|a_{l_1}|+l_1)+...+(|a_0|+...+|a_{l_q}|+l_q)+\sigma(I,J)+pq$$
where $\sigma(I,J)$ is the sign of the permutation which arranges $(j_1,...,j_q,i_0,...,i_p)$ in order.
For example if 
$$\begin{tikzcd}
X_0&X_1\arrow{l}{a_0}&X_2\arrow{l}{a_1}&X_0\arrow{l}{a_2}	
\end{tikzcd}
$$
are three composable arrows in $\underline{\mathscr{C}}(U_I)$ and $j\notin I$ then the terms in $h^1(a_0[a_1|a_2])$ over $U_{I\cup j}$ are in bijection with paths of length 4 in the diagram
$$\begin{tikzcd}[scale cd=0.7]
&&\varphi_{U_I}(X_1)|_{U_{I\cup j}}\arrow{d}{\alpha^{-1}}&&\varphi_{U_I}(X_2)|_{U_{I\cup j}}\arrow{d}{\alpha^{-1}}\arrow{ll}{\varphi_{U_I}(a_1)|_{U_{I\cup j}}}&&\varphi_{U_I}(X_3)|_{U_{I\cup j}}\arrow{d}{\alpha^{-1}}\arrow{ll}{\varphi_{U_I}(a_2)|_{U_{I\cup j}}}\\
\varphi_{U_I}(X_0)|_{U_{I\cup j}}&&\varphi_{U_{I\cup j}}(X_1|_{U_{I\cup j}})\arrow{ll}{\alpha\circ \varphi_{U_{I\cup j}}(a_0|_{U_{I\cup j}})}&&\varphi_{U_{I\cup j}}(X_2|_{U_{I\cup j}})\arrow{ll}{\varphi_{U_{I\cup j}}(a_1|_{U_{I\cup j}})}&&\varphi_{U_{I\cup j}}(X_0|_{U_{I\cup j}})\arrow{ll}{\varphi_{U_{I\cup j}}(a_2|_{U_{I\cup j}})}
\end{tikzcd}
$$

We define $\Cech(\varphi,\alpha):\Cech(\mathscr{U},\underline{C}_\bullet(\underline{\mathscr{C}}))\to \Cech(\mathscr{U},\underline{C}_\bullet(\underline{\mathscr{C}}))$
by
$$C_k(\underline{\mathscr{C}}(U_I))\ni \bar{a}:=a_0[a_1|\cdots|a_k]\mapsto \sum_{q\geq 0}h^q(\bar{a}).$$
\begin{prop}\label{proponlaxmorphisms}
The map $\Cech(\varphi,\alpha)$ is a chain map and fits into a commutative diagram in $D_{Z/2}(\C)$

$$\begin{tikzcd}
C_\bullet(\underline{\mathscr{C}}(X))\arrow{d}{}\arrow{r}{(\varphi_X)_*}&C_\bullet(\underline{\mathscr{D}}(X))\arrow{d}{}\\
\Cech(\mathscr{U},\underline{C}_\bullet(\underline{\mathscr{C}}))\arrow{r}{\Cech(\varphi,\alpha)}&	\Cech(\mathscr{U},\underline{C}_\bullet(\underline{\mathscr{D}}))
\end{tikzcd}$$
where the vertical arrows are induced by the restriction functors.
\end{prop}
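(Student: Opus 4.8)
The plan is to verify directly that $\Cech(\varphi,\alpha)$ is a chain map, and then to exhibit the comparison with $(\varphi_X)_*$ after restricting to the global sections. I would organize the argument into three stages: (i) a bookkeeping lemma showing that the ``path'' description of $h^q$ (generalizing the length-$4$ diagram displayed above) is well-posed, i.e. the terms really are composable morphisms in $\underline{\mathscr{D}}(U_{I\cup J})$ after inserting the isomorphisms $\alpha_{\cdot,\cdot}$ and $\alpha^{-1}$, and that the cocycle condition on $\alpha$ is exactly what makes the total sum over ordered tuples $J$ independent of the intermediate bracketings; (ii) the core computation that $(d_{\Cech}+\bar d)\circ \Cech(\varphi,\alpha) = \Cech(\varphi,\alpha)\circ(d_{\Cech}+\bar d)$; (iii) the naturality square, which for $q=0$ is literally $(\varphi_{U_I})_*$ applied componentwise and for $q>0$ contributes the homotopy witnessing commutativity in $D_{\Z/2}(\C)$.

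For stage (ii) the strategy is the familiar one for proving that an explicit formula built from a simplicial/\v{C}ech double complex is a chain map: expand $(d_{\Cech}+\bar d_{\underline{\mathscr{D}}})\,h^q(\bar a)$ and $h^q(d_0\bar a) + h^q(d_1\bar a) + h^q(d_2\bar a)$ together with the ``\v{C}ech-boundary'' pieces $h^{q\mp1}$ that appear when the \v{C}ech differential either adds an index to $I$ (producing a term of $h^{q-1}$ on a larger $I$) or deletes an index from $J$ (producing $h^{q+1}$-type cancellation), and match them term by term. Concretely, $d_{\Cech}$ acting on the $J$-component either hits an index $j_s \in J$—which, after reindexing $J_s$, turns the inserted $\alpha^{-1}$ into an identity and collapses two adjacent slots, mirroring a piece of $d_2$ applied inside $h^{q-1}$—or hits an index of $I$, which is absorbed into the $h^q$-term over the enlarged face. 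The internal differential $\bar d_{\underline{\mathscr{D}}}$ passing through $\varphi$ (a dg functor, so it commutes with $d$) produces exactly $h^q(d_1\bar a)$ up to the Koszul signs encoded in $\epsilon$; the curvature and multiplication terms $d_0,d_2$ match because $\varphi$ is a cdg functor (here $h=0$ since these are dg categories, so $d_0$ drops out) and because the $\alpha$'s are functorial isomorphisms compatible with composition. The sign $\epsilon$—in particular the decomposition $(|a_0|+\cdots+|a_{l_r}|+l_r)$ summed over $r$, the shuffle sign $\sigma(I,J)$, and the $pq$ term—is engineered precisely so that all of these cancellations occur with a global minus sign; checking this is the tedious part but is forced once one fixes the conventions in section~\ref{notation}.

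For stage (iii), note that when $U_I = X$ (i.e.\ $p=0$, $I$ a single index, or more precisely on the $\Cech^0$ strand coming from the cover restricting trivially) the $q=0$ summand of $\Cech(\varphi,\alpha)$ is $a_0[a_1|\cdots|a_k]\mapsto \varphi(a_0)[\varphi(a_1)|\cdots|\varphi(a_k)]$, which is exactly $(\varphi_X)_*$ followed by the restriction map $C_\bullet(\underline{\mathscr{D}}(X))\to \Cech(\mathscr{U},\underline{C}_\bullet(\underline{\mathscr{D}}))$. The higher $h^q$ ($q\ge1$) assemble, by the same expansion as in stage (ii) but now with one leg of the square being the restriction chain map (which is a quasi-isomorphism since $\mathscr{U}$ is an affine cover and we are taking a \v{C}ech resolution of the presheaf of complexes), into a chain homotopy between the two composites $C_\bullet(\underline{\mathscr{C}}(X))\to\Cech(\mathscr{U},\underline{C}_\bullet(\underline{\mathscr{D}}))$; hence the square commutes in $D_{\Z/2}(\C)$.

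The main obstacle I anticipate is stage (ii)—specifically, keeping the three independent sign sources (\v{C}ech degree $p$, internal $\Z/2$-degree of the $a_i$, and the shuffle permutation of $J$ against $I$) consistent across the $d_0,d_1,d_2$ differentials simultaneously with the insertion points $0\le l_1\le\cdots\le l_q\le k$ of the $\alpha^{-1}$'s. The conceptual content is light (it is the standard shuffle/Alexander–Whitney-type homotopy transported through a lax functor), but the cocycle condition on $\alpha$ must be invoked at exactly the right place—when two \v{C}ech indices from $J$ are adjacent and a double composite of $\alpha$'s appears—and verifying that this is the \emph{only} place it is needed, and that without it the map would fail to be a chain map, is the crux.
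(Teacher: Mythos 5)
Your proposal is correct and follows essentially the same route as the paper: the chain-map property is established by expanding $(d_{\Cech}+\bar d)\circ h^q$ and matching term types against $h^{q\mp1}$ and $h^q$ applied to the Hochschild differentials (the paper's lemma lists seven species of terms and pairs them off exactly as you describe, with the cocycle condition entering where two adjacent $\alpha^{-1}$'s compose), and the square commutes via an explicit homotopy given by the same formulas as $h^q$ but applied directly to global sections (the paper's $\tilde h^q$ with the convention $p=-1$). No gap to report.
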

\begin{proof}
We first check that $\Cech(\varphi,\alpha)$ is a chain map which follows from the following lemma
\begin{lemma}
	(i) $d_{\Cech}\circ h^{q-1}+\bar{d}_{2}\circ h^{q}=h^{q}\circ \bar{d}_{2}+h^{q-1}\circ d_{\Cech}$ for $q\geq 1$.\\
	(ii) $\bar{d}_1\circ h^q=h^q\circ \bar{d}_1$ for all $q\geq 0$.
\end{lemma}	
\begin{proof}[Proof of lemma]
For $(i)$ we note that $\bar{d}_{2}\circ h^q(\bar{a})$ will consist of terms of the following form 
\begin{align*}&(I) \ \alpha\varphi(a_0|_{U_{I\cup J}})\alpha^{-1}\Big[\cdots \Big]\\\\
&(II) \ \alpha\varphi(a_0|_{U_{I\cup J}})\Big[\cdots \Big|\varphi((a_ra_{r+1})|_{U_{J_s}})|_{U_{I\cup J}}\Big|\cdots\Big]\\\\
&(III) \ \alpha\varphi(a_0|_{U_{I\cup J}})\Big[\cdots \Big|\varphi(a_{l_s}|_{U_{J_{q-s+1}}})|_{U_{I\cup J}}\alpha^{-1}|_{U_{I\cup J}}\Big|\cdots\Big]\\\\
&(IV) \ \alpha\varphi(a_0|_{U_{I\cup J}})\Big[\cdots \Big|\alpha^{-1}|_{U_{I\cup J}}\varphi(a_{l_{s}+1}|_{U_{J_{q-s}}})|_{U_{I\cup J}}\Big|\cdots\Big]\\\\
&(V) \ \alpha\varphi(a_0|_{U_{I\cup J}})\Big[\cdots\Big|\alpha^{-1}|_{U_{I\cup J}}\alpha^{-1}|_{U_{I\cup J}}\Big|\cdots \Big]\\\\
&(VI) \ \alpha^{-1}|_{U_{I\cup J}}\alpha\varphi(a_0|_{U_{I\cup J}})\Big[\cdots \Big]\\\\
	&(VII) \ \varphi(a_k)|_{U_{I\cup J}}\alpha\varphi(a_0|_{U_{I\cup J}})\Big[\cdots\Big]
	\end{align*}
	The terms in $(I)$ cancel with the terms in $d_{\Cech}\circ h^{q-1}(\bar{a})$, the terms in $(II)$ cancel with terms in $h^q\circ \bar{d}_2(\bar{a})$, the terms in $(III)$ cancel with the terms in $(IV)$, the terms in $(V)$ cancel with themselves, the terms in $(VI)$ cancel with the terms from $h^{q-1}\circ d_{\Cech}$ and finally the terms $(VII)$ cancel with the remaining terms in $h^q\circ \bar{d}_2(\bar{a})$.
	
	Part $(ii)$ is also just a matter of checking.
\end{proof}
Now for the second part of the proposition we note that the two composites are related by a homotopy $H$ which we now describe. On an element $\bar{a}=a_0[a_1|\cdots |a_k]$ it is defined to be
$$\sum_{q\geq 1}\tilde{h}^q(\bar{a})$$
where $\tilde{h}^q:C_k(\underline{\mathscr{C}}(X))\to \Cech^{q-1}(\mathscr{U},\underline{C}_{k+q})$ are defined exactly like $h^q$ in (\ref{laxhomotopy}) with the convention that $p=-1$. 
\end{proof}

We say that two lax-morphisms $(\varphi,\alpha),(\psi,\beta):\underline{\mathscr{C}}\to \underline{\mathscr{D}}$ are isomorphic if there are natural isomorphisms $\tau_U:\varphi_U\overset{\sim}{\implies} \psi_U$ such that for any $V\subset U$ we have a commutative diagram of natural isomorphisms
$$\begin{tikzcd}
	\Res^U_V\circ \varphi_U\arrow{r}{\Res^U_V(\tau_U)}&\Res^U_V\circ \psi_U\\
	\varphi_V\circ \Res^U_V\arrow{u}{\alpha^U_V}\arrow{r}{\tau_{V}}&\psi_V\circ\Res^U_V\arrow{u}{\beta^U_V}.
\end{tikzcd}$$ 
\begin{prop}
	If $(\varphi,\alpha)$ and $(\psi,\beta)$ are isomorphic lax morphisms of presheaves of dg categories $\underline{\mathscr{C}}\to \underline{\mathscr{D}}$ then the induced maps are homotopic $\Cech(\varphi,\alpha)\simeq\Cech(\psi,\beta)$.
\end{prop}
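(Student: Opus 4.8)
The plan is to deduce the homotopy formally, by factoring both lax morphisms through a single lax morphism into a ``path object'' for $\underline{\mathscr{D}}$, rather than by manipulating the maps $h^q$ directly (which would force us to redo the sign bookkeeping of the proof of Proposition \ref{proponlaxmorphisms}).

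Let $\underline{\mathscr{P}}$ be the presheaf of dg categories with $\underline{\mathscr{P}}(U)$ the dg category whose objects are triples $(x_0,x_1,u)$ with $x_0,x_1\in\Ob(\underline{\mathscr{D}}(U))$ and $u\colon x_0\to x_1$ a degree-zero closed isomorphism in $\underline{\mathscr{D}}(U)$, and whose degree-$n$ morphisms $(x_0,x_1,u)\to(y_0,y_1,v)$ are pairs $(f_0,f_1)$ with $f_i\in\Hom^n_{\underline{\mathscr{D}}(U)}(x_i,y_i)$ and $f_1u=vf_0$. Because $u,v$ are closed isomorphisms, $f_1$ is forced to equal $vf_0u^{-1}$, so projection onto $f_0$ identifies the hom-complexes of $\underline{\mathscr{P}}(U)$ with those of $\underline{\mathscr{D}}(U)$ compatibly with composition and differential; thus $\underline{\mathscr{P}}(U)$ is a dg category and the restriction functors of $\underline{\mathscr{D}}$ make $\underline{\mathscr{P}}$ a presheaf of dg categories. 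There are strict morphisms of presheaves of dg categories $p_0,p_1\colon\underline{\mathscr{P}}\to\underline{\mathscr{D}}$ given by projection onto $x_0$ and $x_1$, and $c\colon\underline{\mathscr{D}}\to\underline{\mathscr{P}}$, $x\mapsto(x,x,\id_x)$, with $p_0c=p_1c=\id_{\underline{\mathscr{D}}}$. Each $c_U$ is a quasi-equivalence: it is an isomorphism on hom-complexes by the description above, and it is essentially surjective on $H^0$ because $(\id_{x_0},u^{-1})$ is an isomorphism $(x_0,x_1,u)\to c_U(x_0)$ in $Z^0(\underline{\mathscr{P}}(U))$.

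Next I would construct a lax morphism $(\Phi,\mathcal{A})\colon\underline{\mathscr{C}}\to\underline{\mathscr{P}}$ with $p_0\circ(\Phi,\mathcal{A})=(\varphi,\alpha)$ and $p_1\circ(\Phi,\mathcal{A})=(\psi,\beta)$. Put $\Phi_U(c):=(\varphi_U(c),\psi_U(c),\tau_{U,c})$ on objects and $\Phi_U(f):=(\varphi_U(f),\psi_U(f))$ on morphisms; this is well defined since naturality of $\tau_U$ gives $\psi_U(f)\circ\tau_{U,c}=\tau_{U,c'}\circ\varphi_U(f)$, and it is a dg functor because $\varphi_U$ and $\psi_U$ are. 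Let $\mathcal{A}_{UV,c}$ be the morphism $\Phi_V\Res_{UV}(c)\to\Res_{UV}\Phi_U(c)$ in $\underline{\mathscr{P}}(V)$ with $x_0$-component $\alpha_{UV,c}$; then its $x_1$-component is forced to be $\beta_{UV,c}$ precisely by the compatibility square in the definition of isomorphic lax morphisms, and this is the only place that hypothesis enters. Naturality of $\mathcal{A}_{UV}$ and the cocycle condition for $\mathcal{A}$ reduce, by projecting to $x_0$-components, to the same properties of $(\varphi,\alpha)$, so $(\Phi,\mathcal{A})$ is a lax morphism; and $p_0\circ(\Phi,\mathcal{A})=(\varphi,\alpha)$, $p_1\circ(\Phi,\mathcal{A})=(\psi,\beta)$ by construction, where the composite of a lax morphism with a strict morphism of presheaves is the evident lax morphism.

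Finally, comparing the defining formulas (\ref{laxhomotopy}) shows that for a strict morphism $g$ of presheaves of dg categories and a lax morphism $(\varphi',\alpha')$ one has $\Cech(g\circ(\varphi',\alpha'))=\Cech(g)_*\circ\Cech(\varphi',\alpha')$ as chain maps, where $\Cech(g)_*$ is the chain map induced by $g$ on Cech-Hochschild complexes; this is because $g$ is a dg functor commuting strictly with restrictions, with $\varphi'$, and with $\alpha'$ and its inverse. Applying this with $g=p_0$ and $g=p_1$ gives $\Cech(\varphi,\alpha)=\Cech(p_0)_*\circ\Cech(\Phi,\mathcal{A})$ and $\Cech(\psi,\beta)=\Cech(p_1)_*\circ\Cech(\Phi,\mathcal{A})$, both genuine chain maps by Proposition \ref{proponlaxmorphisms}. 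On each $U_I$ the quasi-equivalence $c_{U_I}$ induces a quasi-isomorphism on Hochschild complexes of the first kind, and since $\mathscr{U}$ is finite, $\Cech(c)_*$ is a quasi-isomorphism; over $\C$ it therefore has a chain homotopy inverse $h$, and from $\Cech(p_0)_*\circ\Cech(c)_*=\id=\Cech(p_1)_*\circ\Cech(c)_*$ we get $\Cech(p_0)_*\simeq\Cech(p_0)_*\circ\Cech(c)_*\circ h=h\simeq\Cech(p_1)_*$, whence $\Cech(\varphi,\alpha)\simeq\Cech(\psi,\beta)$. The only genuine work is checking that $(\Phi,\mathcal{A})$ is a lax morphism, which is bookkeeping and not a real obstacle. (If instead one wanted a proof in the spirit of Proposition \ref{proponlaxmorphisms}, one would write an explicit homotopy $S=\sum_{q\ge0}S^q$ modelled on the $h^q$ of (\ref{laxhomotopy}) but inserting one extra $\tau$ or $\tau^{-1}$ in each slot and switching from $(\varphi,\alpha)$ to $(\psi,\beta)$ past it; there the only obstacle is sign bookkeeping.)
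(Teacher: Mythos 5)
Your argument is correct, but it is genuinely different from the paper's. The paper proves the proposition by writing down an explicit chain homotopy $\sum_{q\ge 0}H^q$, obtained from the formula (\ref{laxhomotopy}) for $h^q$ by inserting $\tau$ at the front and $\tau^{-1}$ in one slot and switching from $(\varphi,\alpha)$ to $(\psi,\beta)$ past that slot --- exactly the variant you sketch in your closing parenthesis --- and the only content is the sign and cancellation bookkeeping. Your route instead factors both lax morphisms through a single lax morphism $(\Phi,\mathcal{A})$ into a path object $\underline{\mathscr{P}}$ for $\underline{\mathscr{D}}$, and the key observations are all sound: the hom-complexes of $\underline{\mathscr{P}}(U)$ are identified with those of $\underline{\mathscr{D}}(U)$ because $u,v$ are closed isomorphisms; the $x_1$-component of $\mathcal{A}_{UV,c}$ being $\beta_{UV,c}$ is precisely the compatibility square in the definition of isomorphic lax morphisms; $\Cech(g)\circ\Cech(\varphi',\alpha')=\Cech(g\varphi',g\alpha')$ holds on the nose for a strict $g$ since $g$ commutes with restrictions and sends $\alpha'^{\pm1}$ to $(g\alpha')^{\pm1}$ termwise in (\ref{laxhomotopy}); and $\Cech(p_0)\simeq\Cech(p_1)$ follows because both are one-sided inverses of the quasi-isomorphism $\Cech(c)$, which over $\C$ is a homotopy equivalence. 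What your approach buys is the complete elimination of sign bookkeeping, at the cost of invoking heavier inputs: quasi-equivalence invariance of the Hochschild complex (Keller) applied on each $U_I$, plus the finite filtration argument in the \v{C}ech direction, plus the field hypothesis to upgrade the quasi-isomorphism $\Cech(c)$ to a homotopy equivalence. The paper's proof is self-contained, produces an explicit homotopy, and works over any base ring; yours is shorter to verify and less error-prone, and its conclusion (homotopy of chain maps, hence equality in $D_{\Z/2}(\C)$) is all that is used downstream, e.g.\ in square $(B)$ of Lemma \ref{diagram2}.
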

\begin{proof}
	Let $\tau:(\varphi,\alpha)\to (\psi,\beta)$ be an isomorphism. The following is a chain homotopy between $\Cech(\varphi,\alpha)$ and $\Cech(\psi,\beta)$:
	\begin{align*}
	&\sum_{q\geq 0}H^q,\text{ where }H^q(a_0[a_1|\cdots|a_k])	:=\\
	\sum\sum &(-1)^{\epsilon'}\tau|_{U_{I\cup J}}\alpha\varphi(a_0|_{U_{I\cup J}})\Big[\cdots\Big|\alpha^{-1}\Big|\varphi(a_{l_i+1}|_{U_{J_{q-i}}})|_{U_{I\cup J}}\Big|\cdots\\
&\Big|\tau|_{U_{I\cup J}}^{-1}\Big|\cdots\Big|\beta^{-1}|_{U_{I\cup J}}\Big|\psi(a_{l_{i+1}+1})|_{U_{I\cup J}}\Big|\cdots\Big]
	\end{align*}
	where the outer sum is indexed as in the definition of $h^q$ (see equation $(\ref{laxhomotopy})$ above) and the inner sum runs from $0$ to $k+q$ picking out the position for $\tau^{-1}$. The sign is 
	$$\epsilon'=\epsilon+|a_0|+...+|a_r|+r+s$$
	where $\epsilon$ is as in the definition of $h^q$, $a_r$ is the last morphism among $a_0,...,a_k$ that appear before $\tau^{-1}$ and $s$ is the number of $\alpha^{-1}$'s that appear before $\tau^{-1}$. 
\end{proof}

If $\underline{\mathscr{C}}$ is a presheaf of dg algebras (recall that this means $\underline{\mathscr{C}}(U)$ only has one object for all $U\subset X$), then any two lax morphisms of the form $(\varphi,\alpha),(\varphi,\beta):\underline{\mathscr{C}}\to \underline{\mathscr{D}}$ are isomorphic. Indeed, lets call the object in $\underline{\mathscr{C}}(X)$ $c$. Then we can define $\tau_{U,c|_U}:=(\beta^X_U)^{-1}\circ\alpha^X_U$. Now if $\varphi$ is an actual morphism of presheaves of dg categories then $(\varphi,\id)$ is a lax morphism and if $\underline{\mathscr{C}}$ is a presheaf of dg algebras then any other lax morphism $(\varphi,\alpha)$ is isomorphic to $(\varphi,\id)$. Note that a morphism of presheaves of dg categories $\varphi$ induces a morphism $$\Cech(\varphi):\Cech(\mathscr{U},\underline{C}_\bullet(\underline{\mathscr{C}}))\to \Cech(\mathscr{U},\underline{C}_\bullet(\underline{\mathscr{D}}))$$
by taking $(\varphi_{U_I})_*$ on the summand $C_\bullet(\underline{\mathscr{C}}(U_I))$. 
\begin{lemma}\label{comparinglaxandstrict}
If $\varphi:\underline{\mathscr{C}}\to \underline{\mathscr{D}}$ is a morphism of presheaves of dg categories, then the maps $\Cech(\varphi)$ and $\Cech(\varphi,\id)$ are chain homotopic.
\end{lemma}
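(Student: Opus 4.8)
The plan is to reduce the statement to the contractibility of the ``degenerate'' part of the Hochschild complex. First I would observe that in the decomposition $\Cech(\varphi,\id)=\sum_{q\ge 0}h^q$ the term $h^0$ is exactly $\Cech(\varphi)$: inspecting the formula $(\ref{laxhomotopy})$ with $\alpha=\id$, the only index set of size $q=0$ is $J=\emptyset$, so $J_0=I$, no identities are inserted, all restrictions are trivial and the sign is $+1$, giving $h^0(a_0[a_1|\cdots|a_k])=\varphi_{U_I}(a_0)[\varphi_{U_I}(a_1)|\cdots|\varphi_{U_I}(a_k)]$, i.e. $(\varphi_{U_I})_*$ on the summand $C_\bullet(\underline{\mathscr C}(U_I))$. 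Hence $\Cech(\varphi,\id)-\Cech(\varphi)=\sum_{q\ge 1}h^q=:g$, and since $\Cech(\varphi)$ is a chain map (strictness of $\varphi$ makes $(\varphi_{U_I})_*$ commute with $d_\Cech$, $\bar d_1$, $\bar d_2$) and $\Cech(\varphi,\id)$ is a chain map by Proposition \ref{proponlaxmorphisms}, the difference $g$ is a chain map. So it suffices to contract $g$.

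Next I would pin down the image of $g$. Parsing $(\ref{laxhomotopy})$ once more: with $\alpha=\id$, every summand of $h^q(\bar a)$ for $q\ge 1$ is a Hochschild word of the form $\varphi(a_0|_{U_{I\cup J}})\bigl[\cdots\bigr]$ whose bracket contains exactly $q$ entries equal to the identity morphism $\alpha^{-1}=\id$, and these always occupy non-initial slots (the factor $\alpha\circ\varphi(a_0|_{\cdots})$ sitting outside the bracket is never one of the inserted identities). Let $\underline{D}_\bullet(\underline{\mathscr D})\subseteq\underline{C}_\bullet(\underline{\mathscr D})$ be the presheaf of subcomplexes spanned by such \emph{degenerate} words. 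It is genuinely a subcomplex: this is the classical fact that in the cyclic bar complex the two face terms of $\bar d_2$ adjacent to an inserted identity cancel, while $\bar d_1$ preserves degeneracy since $d(\id)=0$ (the curvature term $d_0$ vanishes here, as we work with dg categories). Since the restriction functors preserve identities, $\Cech(\mathscr U,\underline{D}_\bullet(\underline{\mathscr D}))$ is a subcomplex of $\Cech(\mathscr U,\underline{C}_\bullet(\underline{\mathscr D}))$ stable under $d_\Cech$, and $g$ factors through it as a chain map.

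The key input is then that the degenerate subcomplex $\underline{D}_\bullet(\mathscr A)$ of the cyclic nerve of a dg category $\mathscr A$ is contractible via a chain homotopy natural in $\mathscr A$: this is part of the Dold--Kan package, the degenerate subcomplex of the Moore complex of a simplicial $\C$-module being a natural direct complement of the normalized subcomplex and carrying a natural contraction $s$ assembled from the degeneracies and faces. Naturality makes $s$ commute with the restriction functors, so after the usual $(-1)^p$ twist along Čech degree $p$ it assembles into a contracting homotopy $s$ of $\Cech(\mathscr U,\underline{D}_\bullet(\underline{\mathscr D}))$ for the total differential. Then $K:=s\circ g$ is a degree $+1$ map, and since $g$ is a chain map into the subcomplex and $s$ contracts that subcomplex, a one-line computation gives $D_{T'}K+K D_T=g=\Cech(\varphi,\id)-\Cech(\varphi)$, which is the desired chain homotopy. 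Alternatively, one can avoid quoting Dold--Kan and write $K$ out explicitly as a sum $\sum_{q\ge 1}K^q$ of maps modeled on $(\ref{laxhomotopy})$ with one inserted identity ``promoted to the homotopy slot'', and check the homotopy identity by the same sign bookkeeping as in Proposition \ref{proponlaxmorphisms}; this is more laborious.

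I expect the main obstacle to be the bookkeeping: carefully confirming from the somewhat intricate formula $(\ref{laxhomotopy})$ that every term of $\sum_{q\ge 1}h^q$ really is degenerate (the $\alpha^{-1}$'s landing strictly inside the bracket), and nailing down the Koszul signs so that the natural contraction of the degenerate subcomplex is a contracting homotopy for the \emph{total} (Čech plus Hochschild) differential rather than just the Hochschild one. Both are routine but error-prone.
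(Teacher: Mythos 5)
Your argument is correct, but it takes a genuinely different route from the paper's. The paper first observes that with $\alpha=\id$ and $\varphi$ strict, the summands of $h^q$ belonging to a fixed underlying set $J$ depend on the chosen ordering of $J$ only through the alternating sign $\sigma(I,J)$, so for $q\ge 2$ they cancel in pairs and $h^q=0$; hence $\Cech(\varphi,\id)-\Cech(\varphi)=h^1$ exactly, and the paper then writes down an explicit homotopy $H$ (inserting two identities over each $U_{I\cup j}$) and verifies the homotopy identity by direct sign bookkeeping. You instead keep the whole of $g=\sum_{q\ge1}h^q$, observe that it is a chain map landing in the degenerate subcomplex, and contract it there. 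Both work; your route buys conceptual clarity and replaces the sign verification by a citation, at the price of invoking slightly more than bare Dold--Kan: you need an actual \emph{natural chain contraction} of the degenerate subcomplex, not merely its acyclicity. That is the explicit Eilenberg--Mac Lane normalization homotopy, built from degeneracies and faces; since these are maps of complexes ($d(\id)=0$, and $d$ is a derivation for composition) and are natural in the dg category, the contraction automatically commutes with $\bar d_1$ and with the maps induced by the restriction functors, so your $(-1)^p$-twisted assembly $S$ over the \v{C}ech direction and the final identity $d_{T'}K+Kd_T=(d_{T'}S+Sd_{T'})\circ g=g$ go through (your remark that the two faces adjacent to an inserted identity cancel also covers the cyclic face, so the degenerate part is indeed a subcomplex of the cyclic bar complex). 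The paper's computation is longer but self-contained, and the explicit $H$ it exhibits is essentially what your $K=S\circ h^1$ unwinds to.
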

\begin{proof}
	Note that with $\alpha$ being the identity natural transformation all the $h^q$'s in the definition of $\Cech(\varphi,\id)$, with $q>1$, vanish. Therefore $\Cech(\varphi,\id)-\Cech(\varphi)=h^1$. Let $\bar{a}=a_0[a_1|\cdots|a_k]\in C_k(\underline{\mathscr{C}}(U_I))$, where $I=\{i_0<...<i_p\}$ and define
	$$H(\bar{a})=\sum_{j,l_1,l_2}(-1)^{\tau}\varphi (a_0)[\cdots|\varphi(a_{l_1})|\id|\cdots|\varphi(a_{l_2})|\id|\cdots]\in \bigoplus C_{k+2}(\underline{\mathscr{D}}(U_{I\cup j}))$$
	where the sum is over all $j\notin I$, and $0\leq l_1\leq l_2\leq k$ and the sign is given by
	$$\tau=(|a_0|+...+|a_{l_1}|+l_1)+(|a_0|+...+|a_{l_2}|+l_2)+\sigma(I,j).$$
	Then $(\bar{d}_1+\bar{d}_2+d_{\Cech})H+H(\bar{d}_1+\bar{d}_2+d_{\Cech})=-h^1=\Cech(\varphi)-\Cech(\varphi,\id)$.
\end{proof}

\section{Hochschild homology of category of matrix factorizations}
In this section we introduce matrix factorizations and recall the Hochschild homology for some categories of matrix factorizations.
\subsection{Affine matrix factorizations}
Let $A$ be a commutative ring and $h\in A$ and consider the cdg algebra $A_h=(A,0,h)$ from example \ref{curvedalgebra}. Then a matrix factorization of $h$ is nothing but an object in $A_h-mod^{cdg}_{fgp}$. More precisely, such an object consists of the following data
$$\begin{tikzcd}
	P^0\arrow[shift left]{r}{\delta^0}&P^1\arrow[shift left]{l}{\delta^1}
\end{tikzcd}$$
where $P^0,P^1$ are projective $A$-modules, $\delta^0,\delta^1$ are $A$-module homomorphisms such that the composites $\delta^1\delta^0$ and $\delta^0\delta^1$ are both multiplication by $h$.

\subsection{Global matrix factorizations}\label{globalfact}
Now let $X$ be a smooth quasi compact scheme and $f\in \oo_X(X)$ a global function. We will define the derived category of coherent matrix factorizations and a dg enhancement of this. First, consider the dg category $\text{fact}(X,f)$ whose objects consist of the following data

$$\begin{tikzcd}
	\mathcal{F}^\bullet=\mathcal{F}^0\arrow[shift left]{r}{\delta^0}&\mathcal{F}^1\arrow[shift left]{l}{\delta^1}
\end{tikzcd}$$
where $\mathcal{F}^0,\mathcal{F}^1$ are coherent $\oo_X$-modules and $\delta^0,\delta^1$ are $\oo_X$-module homomorphisms such that $\delta^0\delta^1 $ and $\delta^1\delta^0$ both equal multiplication by $f$. The hom spaces are defined as 
\begin{align*}
\Hom_{\text{fact}(X,f)}(\mathcal{F}^\bullet,\mathcal{G}^\bullet):=\Hom_{\oo_X}^\bullet(\mathcal{F}^\bullet,\mathcal{G}^\bullet)	
\end{align*}
with differential $\partial(\alpha):=\delta_{\mathcal{G}^\bullet}\alpha-(-1)^{|\alpha|}\alpha\delta_{\mathcal{F^\bullet}}$. The category $Z^0(\text{fact}(X,f))$ is abelian and $H^0(\text{fact}(X,f))$ is triangulated. We define $D(\text{fact}(X,f))$ as the Verdier quotient of $H^0(\text{fact}(X,f))$ by the full thick triangulated subcategory generated by all totalizations of short exact sequences in $Z^0(\text{fact}(X,f))$. 

Next we define $\text{vect}(X,f)$ to be the full dg subcategory of $\text{fact}(X,f)$ on the objects whose underlying sheaves $\mathcal{F}^0,\mathcal{F}^1$ are \text{vect}or bundles. If $X=\Spec(A)$ is affine then this is the category $mod^{cdg}_{fgp}-A_f$ defined earlier and if $X$ is smooth  and affine then $H^0(\text{vect}(X,f))\simeq D(\text{fact}(X,f))$. Under some extra assumptions on $f$ this gives an important example where the two kinds of Hochschild homology agree.
\begin{prop}[\cite{Pol-Pol}, corollary A section 4.7]\label{hoch1and2forvect}
	If $X$ is a smooth affine variety and $f|_{X\setminus V(f)}$ is smooth then the natural map $C_\bullet(\text{vect}(X,f))\to C_\bullet^{II}(\text{vect}(X,f))$ is a quasi isomorphism.
\end{prop}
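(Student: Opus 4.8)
The plan is to build an explicit homotopy between the two Hochschild complexes, exploiting the vanishing theorem for the Hochschild homology of a curved algebra recalled in the text (Calaque, theorem 4.2, as invoked for Example \ref{curvedalgebra}). First I would recall that by Proposition \ref{proponpseudoequivalences}, the Yoneda functor $R\maps \mathscr{C}'\to mod^{qdg}_{fgp}-\mathscr{C}$ and the inclusion $I\maps mod^{cdg}_{fgp}-\mathscr{C}\to mod^{qdg}_{fgp}-\mathscr{C}$ are pseudo equivalences, hence (Proposition \ref{proponpseudoequivalences0}) induce quasi isomorphisms on Hochschild homology \emph{of the second kind}. Applying this with $A=\oo_X(X)$, $h=f$, so that $mod^{cdg}_{fgp}-A_f=\text{vect}(X,f)$, reduces the comparison of $C_\bullet$ and $C_\bullet^{II}$ for $\text{vect}(X,f)$ to the corresponding comparison for the one-object curved algebra $A_f=(A,0,f)$ — provided one can also control Hochschild homology of the \emph{first} kind along these pseudo equivalences. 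The technical heart is therefore a Morita-type invariance statement: the first-kind Hochschild complex of $\text{vect}(X,f)$ is quasi-isomorphic to that of $A_f$ (this is essentially the standard additivization/Morita argument, contracting the simplicial direction using the explicit contracting homotopy for finitely generated projective modules, which is visible already in the structure of $mod_{fgp}$).

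Granting that reduction, the statement becomes: for a smooth affine variety $X$ with $f|_{X\setminus V(f)}$ smooth, the canonical map $C_\bullet(A_f)\to C_\bullet^{II}(A_f)$ is a quasi isomorphism, where $A=\oo_X(X)$. Here $C_\bullet(A_f)$ and $C_\bullet^{II}(A_f)$ are built from the same bigraded object $\bigoplus_{k}A^{\otimes(k+1)}$ (resp. its product completion) with the three differentials $d_0+d_1+d_2$; since $d=0$ for $A_f$, the piece $d_1$ disappears and one is left with the Hochschild differential $b=d_2$ together with the curvature insertion $d_0$ which, for the one-object curved algebra, inserts $f$ in all available slots with signs. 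So $C_\bullet^{II}(A_f)$ is the product-total complex of a first-quadrant-type bicomplex with columns the ordinary Hochschild chains $C_k(A)$ and horizontal differential induced by $b$, vertical by "multiply by $f$". I would then filter by the $\otimes$-degree $k$ and compare the two spectral sequences: on each column the homology for the $b$-differential is $\Omega^\bullet_{A/\C}$ by the classical Hochschild--Kostant--Rosenberg theorem (legitimate since $X$ is smooth), and the induced vertical differential on the $E_1$-page is $\pm df\wedge(-)$. Thus both $C_\bullet(A_f)$ and $C_\bullet^{II}(A_f)$ have associated graded computing the (co)homology of the two-periodic Koszul-type complex $(\Omega^\bullet_{A/\C}, df\wedge(-))$; the difference between sum and product totalization is invisible on homology precisely because the smoothness of $f$ away from $V(f)$ forces this Koszul complex to be supported on the critical locus $\Spec(A/(\partial f)) \subset V(f)$, which is proper over a point in the relevant graded pieces — more carefully, one checks the sum-to-product map is a quasi isomorphism because the homology of each column stabilizes: the $df\wedge(-)$ complex has bounded-below homology in each fixed $\Z/2$-degree once we know $df$ is a regular-enough section (Koszul exactness from the smoothness hypothesis on $f$), so only finitely many columns contribute to each homology class and the product agrees with the sum.

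The main obstacle I anticipate is exactly that last point: ensuring that the natural map from the direct-sum totalization to the direct-product totalization is a quasi isomorphism, i.e.\ a $\lim^1$-vanishing / degreewise-finiteness argument. The hypothesis "$f|_{X\setminus V(f)}$ smooth" is what makes it work — it says the Koszul complex $K(df)$ on $\Omega^1_{A/\C}$ is exact away from $V(f)$, hence the $E_1$-differential $df\wedge(-)$ on $\Omega^\bullet_{A/\C}$ has homology a module over $A$ supported on $V(f)$, and one then has to transfer this to a statement about the filtration being complete and exhaustive with vanishing derived limit. I would handle this by choosing a quasi-isomorphic small model for each column (e.g.\ replacing $C_\bullet(A)$ by the Koszul resolution $\Omega^\bullet_{A/\C}$ via an explicit HKR quasi-isomorphism that is compatible with multiplication by $f$, which exists after possibly a homotopy correction), so that both totalizations become the sum- resp. product-totalization of the genuine two-periodic complex $(\Omega^\bullet_{A/\C}, -df\wedge(-))$ extended $\Z/2$-periodically; for \emph{that} complex the sum and product totalizations visibly agree because $\Omega^\bullet_{A/\C}$ is bounded (it has amplitude $[0,\dim X]$), so only finitely many terms contribute in each $\Z/2$-degree and there is no difference between $\bigoplus$ and $\prod$ at all. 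That reduces everything to the (routine) verification that the HKR map can be made multiplicative-enough with respect to $f$ to commute with the vertical differential up to a controllable homotopy, which is where I would cite or reproduce the relevant computation from \cite{Pol-Pol}.
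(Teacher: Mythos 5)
There is a genuine gap, and it sits at the very first step. The paper does not actually prove this proposition — it quotes it from \cite{Pol-Pol} — but your proposed route cannot be repaired into a proof. Your reduction rests on the claim that the first-kind Hochschild complex of $\text{vect}(X,f)=mod^{cdg}_{fgp}\text{-}A_f$ is quasi-isomorphic to that of the one-object curved algebra $A_f=(A,0,f)$ by a ``standard Morita/additivization argument.'' This is false, and it is false for a structural reason: Hochschild homology of the \emph{first} kind is not invariant under pseudo-equivalences of cdg categories. Propositions \ref{proponpseudoequivalences0} and \ref{proponpseudoequivalences} are stated (and are only true) for the second kind. Concretely, as recalled in the text right after Example \ref{curvedalgebra}, $HH_\bullet(A,0,f)=0$ whenever $f\neq 0$ (\cite{Cal1}, Theorem 4.2), whereas $C_\bullet(\text{vect}(X,f))$ computes $R\Gamma(X,(\Omega^\bullet_X,-df\wedge(-)))$, which is nonzero in general. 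So your intermediate statement ``$C_\bullet(A_f)\to C^{II}_\bullet(A_f)$ is a quasi isomorphism'' is itself false (acyclic source, non-acyclic target), and the fact that your reduction leads to it is a sign the reduction cannot hold. The usual Morita argument breaks precisely because $A_f$ is curved: the bar resolution of a cdg module over a curved algebra is not a resolution in the first-kind sense, and the contracting homotopy you invoke does not interact correctly with the curvature insertion $d_0$.

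The same issue resurfaces in your spectral sequence step: for the direct-sum totalization of the curved Hochschild bicomplex the filtration by tensor length gives a spectral sequence whose $E_1$-page is indeed $(\Omega^\bullet_{A},df\wedge(-))$, but it does not converge to the homology of the sum-total complex (the bicomplex is unbounded in the direction in which $d_0$ moves), which is exactly how $HH_\bullet(A_f)$ can vanish while the $E_1$-page does not. Your final device — replacing each column by the bounded complex $\Omega^\bullet_{A}$ so that sum and product totalizations coincide — only makes sense after the problem has already been solved, since the replacement is a column-wise quasi-isomorphism and transporting it across the sum-totalization is the convergence problem in disguise. The actual argument of \cite{Pol-Pol} works directly with the dg category $\text{vect}(X,f)$ (all of whose objects have zero curvature, so its first-kind complex is meaningful) and uses the hypothesis that $f$ is smooth away from $V(f)$ to control the support of the relevant $\Hom$-complexes; on the first-kind side it never passes through the one-object curved algebra. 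Your discussion of the second-kind complex and of the role of the smoothness hypothesis is reasonable, but the first-kind half of the comparison is where all the content lies, and that is the half your argument does not reach.
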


If $X$ not affine the category $\text{vect}(X,f)$ will not work as a dg enhancement for $D(\text{fact}(X,f))$. Instead we have to introduce the dg category $\text{vect}_{\Cech}(X,f)$. Let $\mathscr{U}$ denote a finite affine cover of $X$. Then we define $\text{vect}_{\Cech}(X,f)$ to have objects same as $\text{vect}(X,f)$ but for the hom complexes we take
\begin{align}\label{cechhoms}
	\Hom_{\text{vect}_{\Cech}(X,f)}(\mathcal{P}^\bullet,\mathcal{Q}^\bullet):=Tot(\Cech{^\bullet}(\mathscr{U},\underline{\Hom}^\bullet(\mathcal{P}^\bullet,\mathcal{Q}^\bullet))).
\end{align}
This is a dg enhancement of $D(\text{fact}(X,f))$ when $X$ is smooth.

We will also need the cdg category $\text{qfact}(X,f)$. Its objects are defined exactly as the objects of $\text{fact}(X,f)$ except we don't require anything about the composites $\delta^0\delta^1$ or $\delta^1\delta^0$. The hom-spaces and the differentials are defined in the same way as for $\text{fact}(X,f)$ and the curvature of an object $\mathcal{F}^{\bullet}$ is the endomorphism $\delta^2-\rho_f$. We define $\text{qvect}(X,f)$ to be the full sub cdg category of $\text{qfact}$ whose underlying sheaves are \text{vect}or bundles. We define the cdg category $\text{qvect}_{\Cech}(X,f)$ by altering the hom complexes as in (\ref{cechhoms}).

Finally we will also need presheaf versions of some of these. We define the following presheaf of dg categories
\begin{align*}
\underline{\text{vect}}_{\Cech}(X,f)(U):=\text{vect}_{\Cech}(U,f|_U).
\end{align*}
The presheaves of cdg categories $\underline{\text{qvect}}_{\Cech}(X,f)$, $\underline{\text{vect}}(X,f)$ and $\underline{\text{qvect}}(X,f)$ are defined similarly.
\begin{rmk}
Note that a lot of the dg and cdg categories introduced in this section are not small. In order to still make sense of the standard complexes computing their Hochschild homology we therefore have to restrict to some small subcategories containing at least one object from each isomorphism class. In what follows we will do this implicitly and will assume all categories to be  small.
\end{rmk}

\subsection{Hochschild homology of $\text{vect}_{\Cech}(X,f)$}\label{secthochofmat}
The Hochschild homology of $\text{vect}_{\Cech}(X,f)$ will be denoted $HH_\bullet(X,f)$. For a smooth variety $X$ where $f|_{X\setminus V(f)}$ is smooth this was computed in \cite{Efimov}, it is derived global sections of the complex of sheaves $(\Omega^\bullet_X,-df\wedge(-))$. Below is a slight adaption of the computation in \cite{Efimov}. 

There is a sequence of maps which we will show are all quasi isomorphisms.

$$\begin{tikzcd}[scale cd=0.85]	
C_\bullet(\text{vect}_{\Cech} (X,f))\arrow{r}{(1)}&\Cech(\mathscr{U},\underline{C}_\bullet(\underline{\text{vect}}_{\Cech}(X,f)))&\Cech(\mathscr{U},\underline{C}_\bullet(\underline{\text{vect}}(X,f)))\arrow{l}{(2)}\arrow{dll}{(3)}\\
\Cech(\mathscr{U},\underline{C}^{II}_\bullet(\underline{\text{vect}}(X,f)))\arrow{r}{(4)}&\Cech(\mathscr{U},\underline{C}^{II}_\bullet(\underline{\text{qvect}}(X,f)))&\Cech(\mathscr{U},\underline{C}^{II}_\bullet(\oo_{-f}))\arrow{l}{(5)}\arrow{dll}{(6)}\\
\Cech(\mathscr{U},(\Omega_X^\bullet,-df\wedge-)).
\end{tikzcd}$$
\begin{align}\label{totalhkrf}	
\end{align}

We will first discuss $(6)$ (See \cite{Cal1}, theorem 4.2 b).

\begin{prop}\label{hkrxf}
The following defines a quasi isomorphism
\begin{align*}\begin{split}
	&HKR_{(X,f)}:\Cech(\mathscr{U},\underline{C}_\bullet^{II}(\oo_f))\to \Cech(\mathscr{U},(\Omega_X^{\bullet},df\wedge-)),\\
	C_\bullet^{II}(\oo_f(U_{i_0\cdots i_p}))&\ni a_0[a_1|\cdots |a_k]\mapsto \frac{1}{k!}a_0da_1\wedge...\wedge da_k\in \Omega^k_X(V\cap U_{i_0\cdots i_p})
\end{split}\end{align*}
	
\end{prop}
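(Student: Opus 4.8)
The plan is to deduce this from its affine, non-\v{C}ech version together with a standard spectral-sequence argument. The statement contains two assertions --- that $HKR_{(X,f)}$ is a well-defined chain map, and that it is a quasi-isomorphism --- and I would reduce both to the affine case. Since $\mathscr{U}$ is finite, both $\Cech(\mathscr{U},\underline{C}_\bullet^{II}(\oo_f))$ and $\Cech(\mathscr{U},(\Omega_X^\bullet,df\wedge-))$ are total complexes of bicomplexes bounded in the \v{C}ech direction, and $HKR_{(X,f)}$ is a morphism of such bicomplexes which does not move the \v{C}ech index and, on the factor indexed by $I=\{i_0<\cdots<i_p\}$, is the affine map $HKR_{(U_I,\,f|_{U_I})}\colon C_\bullet^{II}(\oo_f(U_I))\to(\Omega^\bullet_{U_I},df\wedge-)$ given by the same formula $a_0[a_1|\cdots|a_k]\mapsto\frac{1}{k!}a_0\,da_1\wedge\cdots\wedge da_k$. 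Compatibility with the \v{C}ech differential is automatic, since this formula is natural in restrictions and is applied factor by factor. So it is enough to prove: for a smooth affine $\C$-algebra $A$ and $g\in A$, the affine map $HKR_{(A,g)}\colon C_\bullet^{II}(A,0,g)\to(\Omega^\bullet_{A/\C},dg\wedge-)$ is a chain map and a quasi-isomorphism. A finite product of quasi-isomorphisms is again one, so this gives a quasi-isomorphism in each fixed \v{C}ech degree, and the bounded \v{C}ech filtration then upgrades it to a quasi-isomorphism of total complexes.

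For the affine statement I would argue as follows. The differential on the second-kind Hochschild complex $C_\bullet^{II}(A,0,g)$ is $d_0+d_2$: the term $d_1$ vanishes because $\oo_f$ has zero internal differential, $d_2$ is the ordinary cyclic bar differential (lowering tensor length), and $d_0$ inserts the curvature $g$ into each of the $k+1$ cyclic slots (raising tensor length). That $HKR_{(A,g)}$ is a chain map comes down to the two identities $HKR\circ d_2=0$ and $HKR\circ d_0=\pm\,dg\wedge HKR$. The first is classical (Leibniz rule plus a telescoping of signs, exactly as in the usual proof of the Hochschild--Kostant--Rosenberg theorem). For the second, one inserts $g$ between $a_r$ and $a_{r+1}$, applies $\frac{1}{(k+1)!}a_0\,d(-)\wedge\cdots$, and moves $dg$ to the front by antisymmetry of $\wedge$; the $k+1$ insertion positions exactly compensate the passage from $\frac{1}{k!}$ to $\frac{1}{(k+1)!}$, and one bookkeeps the signs coming from $d_0$. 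That $HKR_{(A,g)}$ is a quasi-isomorphism is \cite{Cal1}, Theorem 4.2(b): one transports the explicit contraction of $(C_\bullet(A),d_2)$ onto $(\Omega^\bullet_A,0)$ furnished by the classical HKR theorem along the curvature perturbation $d_0$ (homological perturbation lemma), the relevant sums being finite because $A$ is smooth of finite type over $\C$, hence $\Omega^k_A=0$ for $k>\dim A$. For $A$ a polynomial ring this is already transparent from the Koszul model of $C_\bullet(A)$, on which the perturbed differential is $dg\wedge-$, the general case following by localization.

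The genuinely delicate step is this last one, and the difficulty is precisely that we are working with Hochschild homology of the \emph{second} kind in the presence of a nonzero curvature $g$: a priori the perturbation series --- equivalently, the spectral sequence of a naive filtration --- need not converge, and passing to infinite products can destroy homology. It works here only because $\Spec A$ is finite-dimensional, so the de Rham forms vanish in large degree and every sum that occurs is finite; this is exactly what \cite{Cal1}, Theorem 4.2(b) provides, and I would simply invoke it. Everything else --- the passage through the \v{C}ech bicomplex and the verification of the two chain-map identities --- is routine bookkeeping of signs and indices.
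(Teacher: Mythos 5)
Your proposal is correct, and its global step coincides with the paper's: the map preserves the \v{C}ech index, the cover is finite, so one filters by \v{C}ech degree and reduces to the affine case via the mapping theorem for bounded filtered complexes (this is exactly how the paper handles the non-affine case). Where you diverge is in the affine case itself. You dispose of it by invoking C\u{a}ld\u{a}raru--Tu, Theorem 4.2(b), read as a homological-perturbation transport of the classical HKR contraction along the curvature-insertion differential $d_0$, with convergence guaranteed by $\Omega^k_A=0$ for $k>\dim A$. The paper instead gives a self-contained argument: it writes $C^{II}_\bullet(\oo_f)$ and $(\Omega^\bullet,df\wedge-)$ as $\mathrm{Tot}^{\prod}$ of bicomplexes whose columns are the ordinary (uncurved) Hochschild complex and $(\Omega^\bullet,0)$, and whose horizontal differentials are curvature insertion and $df\wedge-$; the classical HKR theorem gives a column-wise quasi-isomorphism, and a general lemma --- proved via horizontal truncations, the exactness of inverse limits on Mittag-Leffler systems, and the finiteness of the diagonals of the truncated bicomplexes --- upgrades this to a quasi-isomorphism of product totalizations. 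Both resolutions hinge on precisely the finiteness you single out (vanishing of forms in high degree), so you have correctly located the one delicate point, namely the direct-product totalization in Hochschild homology of the second kind; the paper's route buys an argument that does not require exhibiting a special deformation retract or controlling a perturbation series, while yours is shorter at the price of outsourcing the convergence to the cited theorem --- which is legitimate here, since the paper itself names \cite{Cal1}, Theorem 4.2(b) as the source of this statement.
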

To prove this we will need two lemmas
\begin{lemma}
	Suppose that we have an unbounded exact chain complex of inverse systems of abelian groups
	\begin{align*}
		S_\bullet:=(...\to S_n\to S_{n-1}\to S_{n-2}\to...).
	\end{align*}
and that each term is Mittag-Leffler. Then the complex
	\begin{align*}
		...\to\lim S_n\to \lim S_{n-1}\to\lim S_{n-2}\to...
	\end{align*}
	is exact.
\end{lemma}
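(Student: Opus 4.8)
The plan is to cut the unbounded exact complex $S_\bullet$ into short exact sequences of towers whose kernels are Mittag-Leffler, and then feed these into the vanishing of $\lim^1$ for Mittag-Leffler systems.

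First I would put $Z_n:=\ker(S_n\to S_{n-1})$, an inverse system of abelian groups for each $n$. Exactness of $S_\bullet$ (which means exactness at every level of the towers) says both that $Z_n=\im(S_{n+1}\to S_n)$ and that, at every level, $S_n$ surjects onto $Z_{n-1}$; hence for each $n$ we get a levelwise short exact sequence of inverse systems
\[
0\to Z_n\to S_n\to Z_{n-1}\to 0 .
\]
The step requiring care is to check that each $Z_n$ is Mittag-Leffler. One should \emph{not} try to argue that ``a subsystem of a Mittag-Leffler system is Mittag-Leffler'' --- that is false --- but instead use the description $Z_n=\im(S_{n+1}\to S_n)$. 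For any morphism $f\colon T\to S$ of inverse systems with $T$ Mittag-Leffler, the image system $\im f$ satisfies $\im\big((\im f)_j\to(\im f)_i\big)=f_i\big(\im(T_j\to T_i)\big)$, which stabilizes in $j$ since $\im(T_j\to T_i)$ does; so $\im f$ is Mittag-Leffler. Taking $f$ to be the transition $S_{n+1}\to S_n$ shows $Z_n$ is Mittag-Leffler.

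Next I would apply the six-term $\lim$-$\lim^1$ exact sequence to the short exact sequence above. Because $Z_n$ is Mittag-Leffler we have $\lim^1 Z_n=0$ (the classical vanishing, valid for towers indexed by $\N$), so
\[
0\to \lim Z_n\to \lim S_n\to \lim Z_{n-1}\to 0
\]
is exact for every $n$. Finally, the differential $\lim S_n\to\lim S_{n-1}$ of the limit complex factors as the surjection $\lim S_n\twoheadrightarrow\lim Z_{n-1}$ followed by the inclusion $\lim Z_{n-1}\hookrightarrow\lim S_{n-1}$ (the latter injective by left-exactness of $\lim$). Hence $\ker(\lim S_n\to\lim S_{n-1})=\lim Z_n$ as a subgroup of $\lim S_n$, and $\im(\lim S_{n+1}\to\lim S_n)=\lim Z_n$ as well (using the surjectivity from the previous display). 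So kernel equals image at every spot, i.e. $\lim S_\bullet$ is exact.

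The only genuine subtlety I anticipate is the Mittag-Leffler property of the $Z_n$: it really uses exactness of $S_\bullet$, via $Z_n=\im(S_{n+1}\to S_n)$, and not merely that $Z_n$ is a subsystem of the Mittag-Leffler system $S_n$. Everything else is the standard bookkeeping of $\lim$ and $\lim^1$ for towers.
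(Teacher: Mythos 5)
Your proof is correct and takes essentially the same route as the paper's: split the exact complex into the short exact sequences $0\to Z_n\to S_n\to Z_{n-1}\to 0$, observe that the cycle/boundary systems are Mittag-Leffler because they are images (equivalently, quotients) of Mittag-Leffler systems, and use the vanishing of $\lim^1$ to see that these sequences stay exact after passing to the limit. Your write-up is just a more explicit version of the paper's argument, with the useful extra remark that one must use the image description rather than the (false) claim that subsystems of Mittag-Leffler systems are Mittag-Leffler.
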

\begin{proof}[Proof of lemma]
Split the long exact sequence into short exact sequences
$$\begin{tikzcd}[scale cd=0.5]
	&&0\arrow{dr}{}&&0&&\\
	&&&B_n=Z_{n-1}\arrow{ru}{}\arrow{dr}{}&&&\\
	...\arrow{rr}{}\arrow{dr}{}&&S_n\arrow{rr}{}\arrow{ru}{}&&S_{n-1}\arrow{dr}{}\arrow{rr}{}&&S_{n-2}\\
	&B_{n+1}=Z_n\arrow{ru}{}\arrow{dr}{}&&&&B_{n-1}=Z_{n-2}\arrow{ru}{}\arrow{dr}{}&\\
	0\arrow{ru}{}&&0&&0\arrow{ru}{}&&0
\end{tikzcd}$$	
Now the inverse systems $B_k=Z_{k-1}$ are quotients of Mittag-Leffler systems and are therefore themselves Mittag-Leffler. It follows that after applying $\underset{\leftarrow}{\lim}$ termwise the diagonal short exact sequences remain exact and then it follows that the middle long exact sequence remains exact.
\end{proof}

\begin{lemma}
	Let $A^{\bullet,\bullet}$ and $B^{\bullet,\bullet}$ be two bicomplexes whose vertical and horizontal differentials both decrease the degrees. For each $n$ let $A^{\leq n,\bullet}\subset A^{\bullet,\bullet}$ denote the subcomplex truncated horizontally at $n$. Let $A^{\geq n,\bullet}$ be the quotient of $A^{\bullet,\bullet}$ by $A^{\leq n-1,\bullet}$. Suppose further that for each $n$, the diagonals of $A^{\geq n,\bullet}$ and $B^{\geq n,\bullet}$ only have finitely many non zero terms. If $\alpha:A^{\bullet,\bullet}\to B^{\bullet,\bullet}$ is a morphism of bicomplexes which is a quasi isomorphism with respect to the vertical  differentials, then $Tot^{\prod}(\alpha):Tot^{\prod}(A^{\bullet,\bullet})\to Tot^{\prod}(B^{\bullet,\bullet})$ is a quasi isomorphism.
\end{lemma}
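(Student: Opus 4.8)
The plan is to prove the mapping cone $C:=\mathrm{Cone}\big(Tot^{\prod}(\alpha)\big)$ is acyclic by approximating $A^{\bullet,\bullet}$ and $B^{\bullet,\bullet}$ by the truncations $A^{\geq n,\bullet}$, $B^{\geq n,\bullet}$ and then invoking the first (Mittag-Leffler) lemma above. First I would observe that $\alpha$ restricts, for each $n$, to a morphism of bicomplexes $\alpha_{\geq n}\colon A^{\geq n,\bullet}\to B^{\geq n,\bullet}$ — here one uses that products are exact, so $Tot^{\prod}$ of the quotient bicomplex is the quotient of $Tot^{\prod}$ — and that $\alpha_{\geq n}$ is again a quasi-isomorphism with respect to the vertical differentials, since every column of $A^{\geq n,\bullet}$ is either a column of $A^{\bullet,\bullet}$ or zero. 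Set $C_n:=\mathrm{Cone}\big(Tot^{\prod}(\alpha_{\geq n})\big)$.

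The first key step is that each $C_n$ is acyclic. By hypothesis $A^{\geq n,\bullet}$ and $B^{\geq n,\bullet}$ have finite diagonals, so their $Tot^{\prod}$ coincide with the ordinary total complexes, and the decreasing filtration by horizontal degree induces on each of these a filtration which is finite in every fixed total degree. Hence the spectral sequences of this filtration converge, with $E_1$-pages the vertical cohomologies of the columns; since $\alpha_{\geq n}$ is a quasi-isomorphism on columns it induces an isomorphism on $E_1$-pages, so $Tot^{\prod}(\alpha_{\geq n})$ is a quasi-isomorphism and $C_n$ is acyclic. This is the usual comparison theorem for bicomplexes with bounded anti-diagonals; if $A^{\bullet,\bullet}$ itself already has finite diagonals this proves the lemma directly, and the remaining steps only serve the general case.

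The second key step is to recover $C$ from the $C_n$. In each total degree $m$ one has $Tot^{\prod}(A^{\bullet,\bullet})_m=\prod_p A^{p,m-p}=\lim_n\prod_{p\ge n}A^{p,m-p}=\lim_n Tot^{\prod}(A^{\geq n,\bullet})_m$, the transition maps being the obvious surjections forgetting factors; the same holds for $B$, and since inverse limits commute with the finite direct sums and twisted differentials defining a cone, $C_m=\lim_n (C_n)_m$ with surjective, hence Mittag-Leffler, transition maps. Thus $\{C_n\}$, after the harmless reindexing $n\mapsto -n$ to match the shape ``$\cdots\to S_n\to S_{n-1}\to\cdots$'' of the first lemma, is an exact chain complex of inverse systems whose terms are Mittag-Leffler, and that lemma gives that $C=\lim_n C_n$ is acyclic, i.e. $Tot^{\prod}(\alpha)$ is a quasi-isomorphism.

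The main obstacle is the care required with $Tot^{\prod}$ throughout: checking that finiteness of the diagonals of $A^{\geq n,\bullet}$ really forces the horizontal filtration on $Tot(A^{\geq n,\bullet})$ to be finite in each total degree, so that the spectral sequence genuinely converges; that $Tot^{\prod}$ of the quotient bicomplex is the quotient; and that the identification $Tot^{\prod}(A^{\bullet,\bullet})=\lim_n Tot^{\prod}(A^{\geq n,\bullet})$ is compatible with the differentials and with $\alpha$, so that it descends to mapping cones. Once these bookkeeping points are settled the argument is formal.
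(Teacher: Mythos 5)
Your proposal is correct and follows essentially the same route as the paper: truncate to $A^{\geq n,\bullet}$ and $B^{\geq n,\bullet}$, use the boundedness of the diagonals to see that each truncated map is a quasi-isomorphism (you via the convergent spectral sequence of the horizontal filtration, the paper by citing the Mapping Theorem for filtered modules, which is the same argument), and then pass to the inverse limit of the cones using surjectivity of the transition maps and the Mittag--Leffler lemma. The bookkeeping points you flag are exactly the ones the paper handles implicitly, so nothing is missing.
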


\begin{proof}
	Consider the inverse systems of chain complexes
	\begin{align*}S(A)&:=\{...\to\text{Tot}^{\oplus}A^{\geq -2,\bullet}\to \text{Tot}^{\oplus} A^{\geq -1,\bullet}\to \text{Tot}^{\oplus}A^{\geq 0,\bullet}\}\\
	S(B)&:=\{...\to \text{Tot}^{\oplus}B^{\geq -2,\bullet}\to \text{Tot}^{\oplus} B^{\geq -1,\bullet}\to \text{Tot}^{\oplus}B^{\geq 0,\bullet}\}
	\end{align*}
where the maps are induced by canonical quotient maps of bicomplexes. The map $\alpha$ induces a morphism $S(\alpha):S(A)\to S(B)$ of complexes of inverse systems. The assumption on $\alpha$  and the Mapping Theorem for filtered modules (\cite{Macl},theorem XI.3.4) imply that $S(\alpha)$ is a termwise quasi isomorphism, in other words a quasi isomorphism of complexes of inverse systems. But then $Cone(S(\alpha))$ is an acyclic complex of inverse systems of abelian groups, all of whose terms are Mittag-Leffler and therefore, by the previous lemma,
\begin{align*}
	Cone(\underset{\leftarrow}{\lim}S(\alpha))=\underset{\leftarrow}{\lim}Cone(S(\alpha))=0.
\end{align*}
We see that $\underset{\leftarrow}{\lim}S(\alpha):\text{Tot}^{\prod}A^{\bullet,\bullet}\to \text{Tot}^{\prod}B^{\bullet,\bullet}$ is a quasi isomorphism.
\end{proof}

\begin{proof}[Proof of proposition]
	The affine case follows from the classical HKR theorem and by applying the previous lemma to the bicomplexes
	$$A^{\bullet,\bullet}:=\begin{tikzcd}
		 &\vdots\arrow{d}{}&\vdots\arrow{d}{} &\vdots\arrow{d}{} \\
	\dots	&\oo^{\otimes 3} \arrow{l}{}\arrow{d}{d_2}&\oo^{\otimes 2}\arrow{l}{d_0} \arrow{d}{d_2}&\oo \arrow{l}{d_0}\\
	\dots	&\oo^{\otimes 2} \arrow{l}{}\arrow{d}{d_2}&\oo \arrow{l}{d_0}&\\
	\dots	&\oo \arrow{l}{}&&\\
		&&&
	\end{tikzcd}$$
	and	
	$$B^{\bullet,\bullet}:=\begin{tikzcd}
		 &\vdots\arrow{d}{}&\vdots\arrow{d}{} &\vdots\arrow{d}{} \\
	\dots	&\Omega^2 \arrow{l}{}\arrow{d}{0}&\Omega\arrow{l}{df\wedge } \arrow{d}{0}&\oo \arrow{l}{df\wedge}\\
	\dots	&\Omega\arrow{l}{}\arrow{d}{0}&\oo \arrow{l}{df\wedge}&\\
	\dots	&\oo \arrow{l}{}&&\\
		&&&
	\end{tikzcd}$$
	
For the non-affine case, think of $\text{Tot}^{\prod}A^{\bullet,\bullet}$ and $\text{Tot}^{\prod}B^{\bullet,\bullet}$ as presheaves of complexes. We want to show that \begin{align*}
	\begin{split}
		{\Cech}{^p}(\mathscr{U},(\text{Tot}^{\prod}A^{\bullet,\bullet})_q)\to{\Cech}{^p}(\mathscr{U},(\text{Tot}^{\prod}B^{\bullet,\bullet})_q)
	\end{split}
\end{align*}
is a quasi isomorphism. By considering total complexes and filtering this in the $p$-direction we obtain a map of filtered complexes. It then follows from the Mapping Theorem of filtered complexes and the affine case already considered that this map of bicomplexes gives rise to a quasi isomorphism of total complexes. This proves the proposition.\end{proof}

Now $(5)$ is a quasi isomorphism because for any $U_I=U_{i_0\cdots i_p}$ the map $C^{II}_\bullet(\oo_{U_I,-f|_{U_I}})\to C^{II}_\bullet(\text{qvect}(U_I,f|_{U_I}))$ is a quasi isomorphism by proposition \ref{proponpseudoequivalences}. The map $(4)$ is a quasi isomorphism for exactly the same reasons. The map $(3)$ is a quasi isomorphism because each  $C_\bullet(\text{vect}(U_I,f|_{U_I}))\to C^{II}_\bullet(\text{vect}(U_I,f|_{U_I}))$ is a quasi isomorphism by proposition \ref{hoch1and2forvect}. The map $(2)$ is a quasi isomorphism because for each $U_I$ we have a quasi equivalence $\text{vect}(U_I,f|_{U_I})\to \text{vect}_{\Cech}(U_I,f|_{U_I})$ which induce quasi isomorphisms on standard complexes. The proof that the map $(1)$ is a quasi isomorphism is similar to the proof of proposition 5.1 in \cite{Efimov}. We include the proof here with some adjustments to make it fit our situation.
\begin{prop}
	Let $\mathscr{C}$ be one of the two presheaves of dg categories $\underline{\text{perf}}_{\Cech}(X)$ or $\underline{\text{vect}}_{\Cech}(X,f)$. The natural map $C_\bullet(\mathscr{C}(X))\to \Cech(\mathscr{U},\underline{C}_\bullet(\mathscr{C}))$ is a quasi isomorphism.
\end{prop}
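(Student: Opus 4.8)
The plan is to exhibit the map $C_\bullet(\mathscr{C}(X))\to \Cech(\mathscr{U},\underline{C}_\bullet(\mathscr{C}))$ as the edge map of a spectral sequence of the Cech double complex and show that the higher Cech cohomology of the presheaves $U\mapsto C_n(\mathscr{C}(U))$ vanishes, so that the spectral sequence collapses onto the $p=0$ column. Concretely, fix the degree $n$ and consider the presheaf $\underline{C}_n(\mathscr{C})$ on $X$, i.e. $U\mapsto C_n(\mathscr{C}(U))$. I would like to argue that this presheaf is acyclic for the Cech functor with respect to the affine cover $\mathscr{U}$, i.e. that $\check H^p(\mathscr{U},\underline{C}_n(\mathscr{C}))=0$ for $p>0$ and equals $C_n(\mathscr{C}(X))$ for $p=0$. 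Given that, filtering the total complex $\Cech(\mathscr{U},\underline{C}_\bullet(\mathscr{C}))$ by the Cech degree $p$ yields a spectral sequence whose $E_1$-page is concentrated in the column $p=0$, where it reads $C_\bullet(\mathscr{C}(X))$; the induced map from $C_\bullet(\mathscr{C}(X))$ is then automatically a quasi-isomorphism. (One must be slightly careful since the complexes are unbounded and $\Z/2$-graded and we are totalizing a half-plane double complex; convergence is handled exactly as in the two Mittag--Leffler lemmas proved above, or one can totalize degreewise since in each fixed $\Z/2$-degree only finitely many Cech degrees contribute after choosing a finite cover — this is why finiteness of $\mathscr{U}$ matters.)

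The key step, therefore, is the vanishing of higher Cech cohomology of $\underline{C}_n(\mathscr{C})$. Writing out $C_n(\mathscr{C}(U))=\bigoplus_{X_0,\dots,X_n}\Hom_{\mathscr{C}(U)}(X_0,X_1)\otimes\cdots\otimes\Hom_{\mathscr{C}(U)}(X_n,X_0)$, and recalling that in both cases $\mathscr{C}=\underline{\text{vect}}_{\Cech}(X,f)$ or $\underline{\text{perf}}_{\Cech}(X)$ the hom-complexes are themselves Cech totalizations $Tot(\Cech(\mathscr{U}\cap U,\underline{\Hom}^\bullet(\mathcal{P}^\bullet,\mathcal{Q}^\bullet)))$ of complexes of quasi-coherent sheaves, each tensor factor is, as a presheaf in $U$, a direct sum (over multi-indices of the ambient cover) of sheaves of the form $j_*\mathcal{G}$ with $j$ the inclusion of an affine open and $\mathcal{G}$ quasi-coherent — in particular flasque-like for Cech purposes, or at least Cech-acyclic on the affine cover. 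The finite tensor product over $\C$ of such presheaves is again a presheaf with vanishing higher Cech cohomology: over any intersection $U_I$ the stalk/section computation is a tensor product of sections of quasi-coherent sheaves on affines, and the Cech differential is the (signed) Leibniz differential which has a standard contracting homotopy on an affine cover exactly as in the proof that $\check H^{>0}$ of a quasi-coherent sheaf on an affine scheme vanishes. Then the direct sum over the (set-theoretically small, by the smallness remark) collection of objects $X_0,\dots,X_n$ preserves this vanishing, since Cech cohomology of a finite cover commutes with arbitrary direct sums.

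The main obstacle I anticipate is not conceptual but bookkeeping: making precise the claim "the hom-presheaf is a sum of $j_*$(quasi-coherent) and hence Cech-acyclic" when the hom-spaces are \emph{already} Cech totalizations over a (possibly different) affine cover of the ambient $X$, so that one is really staring at an iterated Cech construction, and one must check that the double Cech complex (outer cover $\mathscr{U}$, inner cover the one built into $\text{vect}_{\Cech}$) has the expected cohomology — this is where I would either invoke a refinement/cofinality argument to identify the two covers, or directly run a double-complex spectral sequence collapsing in the inner direction first. This is precisely the point where the cited proof of Proposition 5.1 in \cite{Efimov} does the work, so I would follow that argument, inserting the Mittag--Leffler lemmas already established above to control the unbounded totalizations, and then conclude as in the first paragraph.
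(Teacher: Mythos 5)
There is a genuine gap, and it sits exactly at the step you identify as the ``key step'': the claim that $\check H^p(\mathscr{U},\underline{C}_n(\mathscr{C}))=0$ for $p>0$ \emph{for each fixed Hochschild degree $n$} is false, so the spectral sequence of the Cech filtration does not collapse onto the $p=0$ column. Two things break. First, $C_n(\mathscr{C}(U_I))$ is a direct sum indexed by $(n{+}1)$-tuples of \emph{objects of $\mathscr{C}(U_I)$}, and this index set grows as $U_I$ shrinks; so $\underline{C}_n(\mathscr{C})$ is not a direct sum of presheaves of the form $j_*\mathcal{G}$ with $\mathcal{G}$ quasi-coherent, and cochains supported on objects that only exist over a deep intersection need not be coboundaries in the same Hochschild degree. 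Second, and more decisively, even on the sub-presheaf spanned by restrictions of global objects the tensor products in $C_n$ are taken over $\C$, not over $\oo_X$, and Cech-acyclicity of quasi-coherent sheaves on an affine cover does not survive tensoring over $\C$. Concretely, take $X=\pp^1$ with the standard two-set cover and $n=1$, objects $X_0=X_1=\oo$: the element $t^{-1}\otimes t\in \oo(U_{01})\otimes_\C\oo(U_{01})$ lies in no summand of the image of $\oo(U_0)\otimes_\C\oo(U_0)\oplus\oo(U_1)\otimes_\C\oo(U_1)$, so already $\check H^1(\mathscr{U},\underline{C}_1)\neq 0$. Such classes die in the total complex only because the Hochschild differential $d_2$ cancels them against classes in other Hochschild degrees; descent here is a statement about the totalized complex, not a degreewise one.

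Because of this, the ``bookkeeping'' you defer to Efimov's Proposition 5.1 is in fact the entire content of the proof, and Efimov's argument (which the paper follows) has a completely different shape: one writes $X=U\cup V$ with $U=U_n$ affine and $V$ covered by $n-1$ of the $U_i$, uses the localization triangles $C_\bullet(\mathscr{C}(W)_{Z\cap W})\to C_\bullet(\mathscr{C}(W))\to C_\bullet(\mathscr{C}(W\cap U))$ for the subcategories of objects acyclic near $U$ (Keller's localization theorem) to produce a Mayer--Vietoris triangle for $C_\bullet$, constructs a matching Mayer--Vietoris triangle for the Cech--Hochschild complexes by an explicit cone computation, and concludes by induction on the number of open sets and the five lemma. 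If you want to salvage a spectral-sequence formulation, you must filter so that the Hochschild differential is used first (or argue at the level of the categories, as above); the filtration you chose puts the Hochschild differential last, which is exactly the order in which the argument cannot work.
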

\begin{proof}
We prove it for $\mathscr{C}=\underline{\text{vect}}_{\Cech}(X,f)$. Recall that $\mathscr{U}$ is referring to a fixed affine open covering $X=U_1\cup...\cup U_n$. Let $U=U_n$ and $V=U_1\cup...\cup U_{n-1}$. Let $Z=X\setminus U$. For any open $W\subset X$ let $\mathscr{C}(W)_{Z\cap W}\subset \mathscr{C}(W)$ denote the full sub dg category on the objects which become acyclic in $\mathscr{C}(W\cap U).$ Let $\mathscr{V}$ denote the open covering  $V=U_1\cup U_3\cup...\cup U_{n-1}$. Let $\mathscr{C}'$ denote the presheaf of dg categories on $V$
$$V\supset W\mapsto \text{vect}_{\Cech(\mathscr{V})}(W,f|_W).$$
We have a natural functor $\mathscr{R}es: \mathscr{C}(X)\to \mathscr{C}'(V)$ which restricts objects on $X$ to $V$ and which is defined on homomorphism complexes by
$$\Cech(\mathscr{U},\underline{\Hom}_X^\bullet(\mathcal{F}^\bullet,\mathcal{G}^\bullet))\to \Cech(\mathscr{U}\cap V,\underline{\Hom}_V^\bullet(\mathcal{F}|_V^\bullet,\mathcal{G}|_V^\bullet))\twoheadrightarrow \Cech(\mathscr{V},\underline{\Hom}_V^\bullet(\mathcal{F}|_V^\bullet,\mathcal{G}|_V^\bullet)).$$
After passing to homotopy categories this functor fits into a commutative diagram
$$\begin{tikzcd}
H^0(\mathscr{C}(X))\arrow{d}{\sim}\arrow{r}{H^0(\mathscr{R}es)}&H^0(\mathscr{C}'(V))\arrow{d}{\sim}\\	
D(\text{fact}(X,f))\arrow{r}{Res}&D(\text{fact}(V,f|_V))
\end{tikzcd}.
$$

We have the following diagram in $D_{Z/2}(\C)$
$$\begin{tikzcd}
C_\bullet(\mathscr{C}(X)_Z)\arrow{r}{}\arrow{d}{}&C_\bullet(\mathscr{C}(X))\arrow{r}{}\arrow{d}{(\mathscr{R}es)_*}&C_\bullet(\mathscr{C}(U))\arrow{r}{}\arrow{d}{}&C_\bullet(\mathscr{C}(X)_Z)[1]\arrow{d}{}\\
C_\bullet(\mathscr{C}'(V)_Z)\arrow{r}{}&C_\bullet(\mathscr{C}'(V))\arrow{r}{}&C_\bullet(\mathscr{C}'(V\cap U))\arrow{r}{}&C_\bullet(\mathscr{C}'(V)_Z[1])	
\end{tikzcd}
$$
where the rows are distinguished. (See \cite{Efimov}, proposition 5.1 and its proof.)
Since $Z\subset V$ the left most and the right most vertical arrows are quasi isomorphisms. Therefore we get a Mayer-Viertoris exact triangle
$$\begin{tikzcd}
	C_\bullet(\mathscr{C}(X))\arrow{r}{}&C_\bullet(\mathscr{C}(U))\oplus C_\bullet(\mathscr{C}'(V))\arrow{r}{}&C_\bullet(\mathscr{C}'(V\cap U))\arrow{dll}{}\\
	C_\bullet(\mathscr{C}(X))[1]\arrow{r}{}&...&
\end{tikzcd}
$$
We now claim that there is an exact triangle of the form
$$\begin{tikzcd}
	\Cech(\mathscr{U},\underline{C}_\bullet(\mathscr{C}))\arrow{r}{\alpha}&\Cech(\mathscr{U}\cap U,\underline{C}_\bullet(\mathscr{C}))\oplus \Cech(\mathscr{V},\underline{C}_\bullet(\mathscr{C}'))\arrow{r}{\beta}&\Cech(\mathscr{V}\cap U,\underline{C}_\bullet(\mathscr{C}'))\arrow{dll}{}\\
	\Cech(\mathscr{U},\underline{C}_\bullet(\mathscr{C}))[1]\arrow{r}{}&...&
\end{tikzcd}
$$
to which the first exact triangle naturally maps. Assuming this claim for now the rest of the proof goes as follows. Because the open covering $\mathscr{U}\cap U$ of $U$ has $U$ itself as one of its open sets the map $C_\bullet(\mathscr{C}(U))\to \Cech(\mathscr{U}\cap U,\underline{C}_\bullet(\mathscr{C}))$ is a quasi isomorphism. By induction on the number of open sets the maps $C_\bullet(\mathscr{C}'(V))\to \Cech(\mathscr{V},\underline{C}_\bullet(\mathscr{C}'))$ and $C_\bullet(\mathscr{C}'(V\cap U))\to \Cech(\mathscr{V}\cap U,\underline{C}_\bullet(\mathscr{C}'))$ are quasi isomorphisms. The proposition then follows by the five lemma. 

To construct the second distinguished triangle, we first define what the two maps $\alpha$ and $\beta$ are. First, let $A\subset \Cech(\mathscr{U},\underline{C}_\bullet(\mathscr{C}))$ be the subcomplex consisting of all summands $C_\bullet(\mathscr{C}(U_I))$ with $n\in I$ and $B=C_\bullet(\mathscr{C}(U_I))/A$ so that as a vector space we have $C_\bullet(\mathscr{C}(U_I))=A\oplus B$. Similarly let $E\subset \Cech(\mathscr{U}\cap U,\underline{C}_\bullet(\mathscr{C}))$ be the sub complex consisting of the summands $C_\bullet(\mathscr{C}(U_I\cap U_n))$ with $n\in I$ and $F=\Cech(\mathscr{U}\cap U,\underline{C}_\bullet(\mathscr{C}))/E$ and $G=\Cech(\mathscr{V},\underline{C}_\bullet(\mathscr{C}'))$ so that as vector spaces we have $\Cech(\mathscr{U}\cap U,\underline{C}_\bullet(\mathscr{C}))\oplus \Cech(\mathscr{V},\underline{C}_\bullet(\mathscr{C}'))=E\oplus F\oplus G$. Finally let $H=\Cech(\mathscr{V}\cap U,\underline{C}_\bullet(\mathscr{C}'))$. In this notation, the maps $\alpha$ and $\beta$ can be described by matrices
$$\alpha=\begin{bmatrix}\id & \ 0 & \ 0\\0& \ \alpha_0 & \alpha_1 \end{bmatrix}:A\oplus B\to E\oplus F\oplus G$$
$$\beta=\begin{bmatrix}0\\ \beta_0\\\beta_1\end{bmatrix}:E\oplus F\oplus G\to H$$
where $\alpha_0$ is made up by the restrictions $C_\bullet(\mathscr{C}(U_I))\to C_\bullet(\mathscr{C}(U_I\cap U))$ ($n\notin I$), $\alpha_1$ is a quasi isomorphism which is induced by the quasi equivalences $\mathscr{C}(U_I)\to \mathscr{C}'(U_I)$ ($n\notin I$). The map $\beta_0$ is also a quasi isomorphism, it's induced by the quasi equivalences $\mathscr{C}(U_I\cap U)\to \mathscr{C}'(U_I\cap U)$ ($n\notin I$) and $\beta_1$ is $-1$ times the map induced by the restrictions $\mathscr{C}'(U_I)\to \mathscr{C}'(U_I\cap U)$ ($n\notin I$). In this situation one can show that the canonical maps $Cone(\alpha)\to H$ is a quasi isomorphism and therefore we obtain the required distinguished triangle. 
\end{proof}

\subsection{The case $f=0$}\label{thecasef0}
When $f=0$ we have two intresting categories. 

On the one hand the $\Z$-graded dg-category $\text{perf}_{\Cech}(X)$ whose objects are finite $\Z$-graded complexes of vector bundles and whose morphism complexes are defined as
$$\Hom_{\text{perf}_{\Cech}(X)}(\mathcal{P}^\bullet,\mathcal{Q}^\bullet):=Tot(\Cech^\bullet(\mathscr{U},\underline{\Hom}_X^\bullet(\mathcal{P}^\bullet,\mathcal{Q}^\bullet))).$$
We can then think of $\text{perf}_{\Cech}(X)$ as a $\Z/2$-graded category by forgetting the $\Z$-grading on the  homomorphism complexes and only remembering the parity of each degree.

On the other hand we can consider the $\Z/2$-graded category $\text{vect}_{\Cech}(X,0)$ defined in section \ref{globalfact}.
Note that we have a fully faithful dg functor functor $\text{perf}_{\Cech}(X)\to \text{vect}_{\Cech}(X,0)$ which forgets the $\Z$-grading on the objects and only remembers the parity of each term of a complex. In fact, we have the following proposition.
\begin{prop}\label{proponperfandvect}
If $X$ is a nonsingular variety, then any object in $\text{vect}_{\Cech}(X,0)$ is homotopy equivalent to an object in $\text{perf}_{\Cech}(X)$ and hence they are quasi equivalent.
\end{prop}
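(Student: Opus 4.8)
Suppose $\mathcal{F}^\bullet$ is an object of $\text{vect}_{\Cech}(X,0)$, i.e. a $\Z/2$-graded pair $(\mathcal{F}^0,\mathcal{F}^1)$ with maps $\delta^0:\mathcal{F}^0\to\mathcal{F}^1$ and $\delta^1:\mathcal{F}^1\to\mathcal{F}^0$ with $\delta^0\delta^1=0=\delta^1\delta^0$. The plan is to show that such a "$2$-periodic complex" of vector bundles on a nonsingular $X$ can be replaced, up to homotopy equivalence inside $\text{vect}_{\Cech}(X,0)$, by (the image under the forgetful functor of) a genuine bounded $\Z$-graded complex of vector bundles.

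The key point is local-to-global: on a nonsingular variety every coherent sheaf has finite projective dimension, and more specifically, the images $\im\delta^0$ and $\im\delta^1$ (equivalently the cohomology sheaves $\ker\delta^0/\im\delta^1$ and $\ker\delta^1/\im\delta^0$) are coherent, hence admit finite locally free resolutions. First I would pass to the cohomology sheaves $\mathcal{H}^0=\ker\delta^0/\im\delta^1$ and $\mathcal{H}^1=\ker\delta^1/\im\delta^0$, choose finite locally free resolutions $\mathcal{P}^\bullet_0\to\mathcal{H}^0$ and $\mathcal{P}^\bullet_1\to\mathcal{H}^1$, and then build from these a bounded $\Z$-graded complex $\mathcal{Q}^\bullet$ of vector bundles whose $2$-periodic folding is quasi-isomorphic (as an object of $D(\text{fact}(X,0))$) to $\mathcal{F}^\bullet$. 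Concretely one can splice: an honest $2$-periodic complex of vector bundles is the same as a "curved module over $\oo_0=(\oo_X,0,0)$" which is just an ordinary unbounded $2$-periodic complex, and since $\mathcal{F}^\bullet$ has coherent cohomology concentrated in the two parities, it is quasi-isomorphic to the $2$-periodic folding of a bounded complex of vector bundles by the standard resolution argument (this uses nonsingularity to get the resolutions to be finite and by vector bundles rather than merely coherent sheaves).

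Having produced $\mathcal{Q}^\bullet\in\text{perf}_{\Cech}(X)$ with a quasi-isomorphism $\mathcal{Q}^\bullet\to\mathcal{F}^\bullet$ in $D(\text{fact}(X,0))$, I would upgrade this to a homotopy equivalence inside $\text{vect}_{\Cech}(X,0)$. Here one invokes that $\text{vect}_{\Cech}(X,0)$ is a dg enhancement of $D(\text{fact}(X,0))$ for $X$ smooth (stated in section \ref{globalfact}), so $H^0(\text{vect}_{\Cech}(X,0))\simeq D(\text{fact}(X,0))$; a morphism in $\text{vect}_{\Cech}(X,0)$ which becomes an isomorphism in the homotopy category is by definition a homotopy equivalence. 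Since both $\mathcal{Q}^\bullet$ and $\mathcal{F}^\bullet$ live in $\text{vect}_{\Cech}(X,0)$ and the comparison map can be realized by an actual closed degree-zero morphism in the Čech-hom complex (this is where the Čech resolution in the definition of the hom-complexes is essential — it makes $H^0$ compute the right derived $\Hom$, so a class representing the quasi-isomorphism lifts to a genuine cocycle), we conclude $\mathcal{F}^\bullet$ is homotopy equivalent to $\mathcal{Q}^\bullet\in\text{perf}_{\Cech}(X)$. The "hence they are quasi equivalent" then follows: the fully faithful functor $\text{perf}_{\Cech}(X)\to\text{vect}_{\Cech}(X,0)$ is now essentially surjective up to homotopy equivalence, which is exactly what quasi-equivalence of dg categories requires.

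The main obstacle I anticipate is the passage from a mere quasi-isomorphism in the triangulated category $D(\text{fact}(X,0))$ to an honest homotopy equivalence of the chosen cofibrant-type representatives inside the dg category $\text{vect}_{\Cech}(X,0)$ — one has to be careful that objects built from vector bundles (not arbitrary locally free-resolved complexes) are already "fibrant-cofibrant" enough that $H^0$ of the Čech-hom complex between two such objects computes $\Hom_{D(\text{fact}(X,0))}$ without further resolution. This is the content of the assertion that $\text{vect}_{\Cech}(X,f)$ is a dg enhancement, and for $f=0$ it reduces to the familiar fact that bounded complexes of vector bundles on a smooth variety, with Čech-resolved Hom-complexes, form such an enhancement of $\text{perf}(X)=D^b(\text{coh}\,X)$. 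A secondary technical point is ensuring the resolutions can be chosen finite and by vector bundles simultaneously for both parities and glued into a single bounded $\Z$-graded complex whose $2$-periodicization recovers the $\Z/2$-graded object; this is routine homological algebra on a regular scheme but should be spelled out.
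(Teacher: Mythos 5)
Your overall strategy (reduce to finite locally free resolutions of coherent sheaves on the nonsingular $X$, then use that $\text{vect}_{\Cech}(X,0)$ is an enhancement of $D(\text{fact}(X,0))$ to upgrade an isomorphism in the derived category to a homotopy equivalence) is the right one and agrees in spirit with the paper. But the middle step has a genuine gap: you propose to pass to the cohomology sheaves $\mathcal{H}^0=\ker\delta^0/\im\delta^1$ and $\mathcal{H}^1=\ker\delta^1/\im\delta^0$, resolve them, and ``build from these'' a bounded complex whose folding is quasi-isomorphic to $\mathcal{F}^\bullet$. A $\Z/2$-graded complex is not determined by its cohomology sheaves: resolving $\mathcal{H}^0$ and $\mathcal{H}^1$ separately only produces the formal object $\mathcal{H}^0\oplus\mathcal{H}^1[1]$, which need not be isomorphic to $\mathcal{F}^\bullet$ in $D(\text{fact}(X,0))$; to recover $\mathcal{F}^\bullet$ you must also carry the gluing data between the two cohomology sheaves. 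In the $\Z$-graded setting this data comes from the truncation triangle $\tau_{\leq 0}\mathcal{F}\to\mathcal{F}\to\tau_{\geq 1}\mathcal{F}$, but a $2$-periodic category carries no t-structure and no canonical truncation, so the ``standard resolution argument'' you invoke does not apply as stated.

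The fix --- and this is what the paper does --- is to use the kernel/image short exact sequence of $2$-periodic complexes rather than cohomology: setting $\mathcal{K}^i=\ker\delta^i$ and $\mathcal{Q}^i=\im\delta^i$, the sequence $0\to\mathcal{K}^\bullet\to\mathcal{F}^\bullet\to\mathcal{Q}^\bullet\to 0$ (with zero differentials on $\mathcal{K}^\bullet$ and $\mathcal{Q}^\bullet$) is a short exact sequence in $Z^0(\text{fact}(X,0))$, hence yields a distinguished triangle in $D(\text{fact}(X,0))$ by the very definition of that Verdier quotient. Both outer terms are just coherent sheaves placed in the two parities with zero differential, and each coherent sheaf on the nonsingular $X$ admits a finite locally free resolution, so both outer terms are isomorphic to objects of $H^0(\text{perf}_{\Cech}(X))$; since the image of $H^0(\text{perf}_{\Cech}(X))$ is a triangulated subcategory, the same follows for $\mathcal{F}^\bullet$. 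Your final step --- converting the resulting isomorphism in $D(\text{fact}(X,0))\simeq H^0(\text{vect}_{\Cech}(X,0))$ into a homotopy equivalence between the given representatives and deducing quasi-equivalence from fully faithfulness plus essential surjectivity --- is correct as written.
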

\begin{proof}
Let $\begin{tikzcd}
		\mathcal{F}^{\bullet}:=\Big[\overset{deg 1}{\mathcal{F}^0}\arrow[shift left]{r}{\delta^0}&\overset{deg 0}{\mathcal{F}^1}\arrow[shift left]{l}{\delta^1} \ \Big]
\end{tikzcd}$. Let $\mathcal{K}^i=\ker(\delta^i)$ and $\mathcal{Q}^i=\im(\delta^i)$. Then the following is an exact triangle in $D(\text{fact}(X,0))$
$$\begin{tikzcd}\mathcal{K}^0\arrow[shift left]{d}{0}\arrow{r}{}&\mathcal{F}^0\arrow[shift left]{d}{\delta^0}\arrow{r}{}&\mathcal{Q}^0\arrow[shift left]{d}{0}\arrow{r}{}&\mathcal{K}^1\arrow[shift left]{d}{0}\\
\mathcal{K}^1\arrow[shift left]{u}{0}\arrow{r}{}&\mathcal{F}^1\arrow{r}{}\arrow[shift left]{u}{\delta^1}&\mathcal{Q}^1\arrow[shift left]{u}{0}\arrow{r}{}&\mathcal{K}^0\arrow[shift left]{u}{0}
\end{tikzcd}$$
Both $(\mathcal{K}^\bullet,0)$ and $(\mathcal{Q}^\bullet,0)$ are isomorphic to objects in $H^0(\text{perf}_{\Cech}(X))$. The reason is because $X$ is non-singular the components of $\mathcal{K}^\bullet$ and $\mathcal{Q}^\bullet$ have finite resolutions by locally free sheaves. It follows that $\mathcal{F}^\bullet$ too is isomorphic to an object in $H^0(\text{perf}_{\Cech}(X))$.
\end{proof}
Now let's consider the sequence

$$\begin{tikzcd}[scale cd=0.85]	
C_\bullet(\text{perf}_{\Cech} (X))\arrow{r}{}&\Cech(\mathscr{U},\underline{C}_\bullet(\underline{\text{perf}}_{\Cech}(X)))&\Cech(\mathscr{U},\underline{C}_\bullet(\underline{\text{perf}}(X)))\arrow{l}{}\arrow{dll}{=}\\
\Cech(\mathscr{U},\underline{C}_\bullet(\underline{\text{perf}}(X)))\arrow{r}{=}&\Cech(\mathscr{U},\underline{C}_\bullet(\underline{\text{perf}}(X)))&\Cech(\mathscr{U},\underline{C}_\bullet(\oo_X))\arrow{l}{}\arrow{dll}{}\\
\Cech(\mathscr{U},\Omega_X^\bullet)
\end{tikzcd}$$
From each term in the above sequence there is a natural map to the corresponding term of (\ref{totalhkrf}) forming commutative squares, the map $\Cech(\mathscr{U},\Omega_X^\bullet)\to \Cech(\mathscr{U},\Omega_X^\bullet)$ being the identity. All of the maps above are quasi isomorphisms and this forces all of the maps from this sequences to (\ref{totalhkrf}) to be quasi isomorphisms

\section{Main problem}
In this section we discuss the main problem.

\subsection{Setup}
Let $X$ be a smooth complex variety and let us fix a finite affine covering $\mathscr{U}$ of $X$. Let $f\in H^0(X,\oo_X)$ and suppose $Y\subset X$ is a smooth divisor such that $f|_Y=0$. We assume also that $f|_{X\setminus f^{-1}(0)}$ is smooth. The categories we are mostly interested in are $D^b(\text{coh}Y)$ (usual derived category) and $D(\text{fact}(X,f))$.

We have dg enhancements for both $D^b(\text{coh}(Y))$ and $D(\text{fact}(X,f))$. For $D^b(\text{Y})$ we take the category $\text{\text{perf}}_{\Cech}(Y)$ and for $D(\text{fact}(X,f))$ we take $\text{vect}_{\Cech}(X,f)$.
We have a functor $i_*:D^b(\text{coh}Y)\to D(\text{fact}(X,f))$. It lifts to a quasi functor $\underline{i}_*:\text{\text{perf}}_{\Cech}(Y)\dashrightarrow \text{vect}_{\Cech}(X,f)$ by which we mean that the following diagram commutes
$$\begin{tikzcd}
	&H^0(\text{\text{perf}}(\text{vect}_{\Cech}(X,f)))&\\
	H^0(\text{\text{perf}}_{\Cech}(Y))\arrow{ru}{H^0(\underline{i}_*)}\arrow{d}{\sim}&&H^0(\text{vect}_{\Cech}(X,f))\arrow{dl}{\sim}\arrow{ul}{Yoneda}\\
	D^b(\text{coh}Y)\arrow{r}{i_*}&D(\text{fact}(X,f))\arrow{uu}{Yoneda}&.
\end{tikzcd}$$
The quasi functor $\underline{i}_*$ induces a map in Hochschild homology. We will compute this map in terms of the identifications $$HH_\bullet(Y):=HH_\bullet(\text{perf}_{\Cech}(Y))\simeq \Cech(\mathscr{U}\cap Y,\Omega_Y^\bullet)$$ and $$HH_\bullet(X,f):=HH_\bullet(\text{vect}_{\Cech}(X,f))\simeq \Cech(\mathscr{U},(\Omega^\bullet_X,-df\wedge(-)))$$ from sections \ref{thecasef0} and \ref{secthochofmat} respectively.

\subsection{Replacing $\underline{i}_*$ by a dg functor}\label{P}	
Consider the following matrix factorization $$\begin{tikzcd}
		P:=\Big[\overset{deg 1}{\oo_X(-Y)}\arrow[shift left]{r}{\cdot x}&\overset{deg 0}{\oo_X}\arrow[shift left]{l}{\cdot g} \ \Big]
\end{tikzcd}$$
where $x\in H^0(X,\oo_X(Y))$ is an equation for $Y$ and $g\in H^0(X,\oo_X(-Y))$ is such that $gx=f\in H^0(X,\oo_X)$. In $D(\text{fact}(X,f))$ we have $i_*\oo_Y\cong P$. Indeed, the kernel of the canonical map $P\to i_*\oo_Y$ is the matrix factorization $$\begin{tikzcd}\Big[\overset{deg 1}{\oo_X(-Y)}\arrow[shift left]{r}{\cdot 1}&\overset{deg 0}{\oo_X(-Y)}\arrow[shift left]{l}{\cdot f} \ \Big]
\end{tikzcd}$$
which is zero in the derived category because it maps to zero under the equivalence $\text{Cok}:D(\text{fact}(X,f))\to D^b_{sing}(V(f))$ (See \cite{Orlov-Cok} for details about this equivalence).

Consider also the sheaf of dg algebras
$\cala:=[\oo_X(-Y)\overset{x}{\to} \oo_X].$ It is quasi isomorphic to $\oo_Y$ as sheaves of dg algebras on $X$. The quasi isomorphism $\cala\to \oo_Y$ also induces a quasi isomorphism on Hochschild complexes for any affine open $V\subset X$, $C_\bullet(\cala|_V)\overset{\sim}{\to}C_\bullet(\oo_Y|_V)$.

We define the dg category $\text{perf}_{\Cech}(\cala)$ to have objects, same as $\text{perf}(X)$. The homomorphism complexes are given by
\begin{align*}
	\Hom_{\text{perf}_{\Cech}(\cala)}(\mathcal{V},\mathcal{W}):=\Cech(\mathscr{U},\underline{\Hom}_\cala^\bullet(\mathcal{V}\otimes \cala,\mathcal{W}\otimes \cala)).
\end{align*}
We now introduce a dg functor $\text{Ind}:\text{\text{perf}}_{\Cech}(\cala)\to \text{\text{perf}}_{\Cech}(Y)$. On objects it is defined by 
$$\text{Ind}(\mathcal{V})=\mathcal{V}\otimes i_*\oo_Y.$$
The chain maps on homomorphism complexes are induced by the following morphism of complexes of sheaves
$$\begin{tikzcd}\underline{\Hom}^\bullet_{\cala}(\mathcal{V}\otimes \cala,\mathcal{W}\otimes \cala)\arrow{r}{-\otimes_\cala i_*\oo_Y}	&\underline{\Hom}^\bullet_{\oo_Y}(\mathcal{V}\otimes \cala\otimes_{\cala}i_*\oo_Y,\mathcal{W}\otimes \cala\otimes_{\cala}i_*\oo_Y)\arrow{dl}{\cong}\\
\underline{\Hom}^\bullet_{\oo_Y}(\mathcal{V}\otimes i_*\oo_Y,\mathcal{W}\otimes i_*\oo_Y).&
\end{tikzcd}
$$
\begin{lemma}\label{indfunctor}
The functor Ind defined above induces an isomorphism in $D_{Z/2}(\C)$
$$Ind_*:C_\bullet(\text{perf}_{\Cech}(\cala))\to C_\bullet (perf_{\Cech}(Y)).$$
\end{lemma}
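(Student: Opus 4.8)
The plan is to reduce the statement to the fact that $\cala\to\oo_Y$ induces a termwise quasi-isomorphism of presheaves of dg algebras, combined with the Mayer–Vietoris/induction machinery already used in the proof that map $(1)$ of diagram (\ref{totalhkrf}) is a quasi-isomorphism. First I would observe that the dg functor $\text{Ind}$ is in fact a quasi-equivalence onto its essential image: on objects it is $\mathcal{V}\mapsto\mathcal{V}\otimes i_*\oo_Y$, and on hom-complexes it is the composite displayed just before the lemma, namely base change $-\otimes_\cala i_*\oo_Y$ followed by the canonical isomorphism identifying $\mathcal{V}\otimes\cala\otimes_\cala i_*\oo_Y$ with $\mathcal{V}\otimes i_*\oo_Y$. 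So the content is really a statement about the two presheaves of dg \emph{algebras} $\cala$ and $\oo_Y$ (viewed on $X$, resp. pushed from $Y$) and the Cech–Hochschild complexes attached to presheaves of dg categories built from them. The key input is the already-stated fact (end of section \ref{P}, "Replacing $\underline{i}_*$…") that for every affine open $V\subset X$ the map $C_\bullet(\cala|_V)\to C_\bullet(\oo_Y|_V)$ is a quasi-isomorphism; more generally the induced map of hom-complexes $\underline{\Hom}^\bullet_{\cala}(\mathcal{V}\otimes\cala,\mathcal{W}\otimes\cala)\to\underline{\Hom}^\bullet_{\oo_Y}(\mathcal{V}\otimes i_*\oo_Y,\mathcal{W}\otimes i_*\oo_Y)$ is a quasi-isomorphism of complexes of sheaves on $V$, since $\cala$ is a locally free (length-one) resolution of $\oo_Y$ over $\oo_X$ and everything in sight is a bounded complex of vector bundles.

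Second, I would promote this local statement to a statement about the Cech–Hochschild complexes. Define the presheaf of dg categories $\underline{\text{perf}}_{\Cech}(\cala)$ on $X$ by $U\mapsto\text{perf}_{\Cech(\mathscr{U}\cap U)}(\cala|_U)$ and likewise $\underline{\text{perf}}_{\Cech}(Y)$, so that $\text{Ind}$ is the global-sections component of a lax (in fact strict) morphism of presheaves of dg categories $\underline{\text{Ind}}\colon\underline{\text{perf}}_{\Cech}(\cala)\to\underline{\text{perf}}_{\Cech}(Y)$ — here one must check that $\text{Ind}$ commutes with restriction on the nose, which it does because base change and the Cech construction are both compatible with restriction. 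Then, exactly as in the proof that map $(1)$ is a quasi-isomorphism, the natural maps $C_\bullet(\text{perf}_{\Cech}(\cala))\to\Cech(\mathscr{U},\underline{C}_\bullet(\underline{\text{perf}}_{\Cech}(\cala)))$ and $C_\bullet(\text{perf}_{\Cech}(Y))\to\Cech(\mathscr{U}\cap Y,\underline{C}_\bullet(\underline{\text{perf}}_{\Cech}(Y)))$ are quasi-isomorphisms, and by Proposition \ref{proponlaxmorphisms} (applied to the strict morphism $\underline{\text{Ind}}$, so that $\alpha=\id$ and only $h^0$ survives) one gets a commutative square in $D_{\Z/2}(\C)$ reducing $\text{Ind}_*$ to the Cech-level map $\Cech(\underline{\text{Ind}})$. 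The latter is built summand-wise out of the maps $C_\bullet(\text{perf}_{\Cech(\mathscr{V})}(\cala|_{U_I}))\to C_\bullet(\text{perf}_{\Cech(\mathscr{V}\cap Y)}(Y\cap U_I))$ for the affine opens $U_I$ of the cover; on each such affine piece one is comparing the Hochschild complex of a dg algebra ($\cala|_{U_I}$, which has $H^0$ equivalent to $D^b(\text{coh}(U_I\cap Y))$) with that of $\oo_{U_I\cap Y}$, and this is a quasi-isomorphism by the local statement above together with the standard fact that Hochschild homology of a dg category is invariant under quasi-equivalence (Keller, cited in section \ref{pseudofunctorsbetweendgcats}). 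A spectral-sequence or Mapping-Theorem argument on the Cech filtration then upgrades these termwise quasi-isomorphisms to a quasi-isomorphism of the two total complexes, and hence $\text{Ind}_*$ is an isomorphism in $D_{\Z/2}(\C)$.

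The main obstacle I anticipate is the bookkeeping around the hom-complexes: one needs that tensoring with $\cala$ on both sides of $\underline{\Hom}_{\cala}$, then base-changing to $\oo_Y$, really induces a quasi-isomorphism of the \emph{Cech totalizations} of these sheaf-Homs, not just of the naive sheaf-Homs. This is where one invokes that $\cala$ is a finite complex of vector bundles (so $-\otimes\cala$ is exact and preserves boundedness and local freeness), that $i_*\oo_Y$ has the length-one locally free resolution $\cala$ so $-\otimes_\cala i_*\oo_Y$ is computing a derived functor on the nose, and that the Cech differential is compatible with all of this — essentially the same local-to-global argument (filter the Cech bicomplex, apply the Mapping Theorem for filtered complexes as in the proof of Proposition \ref{hkrxf}) that appears throughout section \ref{secthochofmat}. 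A minor secondary point to verify is that one may replace, on each $U_I$, the $\Z$-graded dg category $\text{perf}_{\Cech}$ by its $\Z/2$-folding without affecting the comparison, which is immediate since all the maps in question are degree-preserving for the $\Z$-grading before folding.
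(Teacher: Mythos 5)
Your strategy genuinely diverges from the paper's. The paper passes to pretriangulated hulls and proves directly that $\widetilde{\text{Ind}}$ is a quasi-equivalence: quasi-fully-faithful by exactly the chain of quasi-isomorphisms on hom-complexes that you also use, and essentially surjective (up to triangles) because $\oo_Y(n)=\text{Ind}(\oo_X(n))$ and the twists $\oo_Y(n)$ classically generate; Keller's theorem then gives the isomorphism on Hochschild complexes. You replace the second half by a local-to-global argument (Cech descent plus the affine case). As written, this has two real gaps.

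First, the descent input $C_\bullet(\text{perf}_{\Cech}(\cala))\overset{\sim}{\to}\Cech(\mathscr{U},\underline{C}_\bullet(\underline{\text{perf}}_{\Cech}(\cala)))$ is not available ``exactly as in the proof that map $(1)$ is a quasi-isomorphism'': that proposition is stated and proved only for $\underline{\text{perf}}_{\Cech}(X)$ and $\underline{\text{vect}}_{\Cech}(X,f)$, and its proof rests on localization/Mayer--Vietoris triangles for those specific categories (objects supported on $Z$, Efimov's Prop.\ 5.1). You would need to re-establish those triangles for categories of $\cala$-modules. More seriously, in the paper this descent statement for $\cala$ is deduced \emph{from} Lemma \ref{indfunctor} (it is how the right-hand vertical arrows in Lemma \ref{diagram2} are shown to be quasi-isomorphisms), so invoking it here without an independent proof is circular. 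Second, you never address essential surjectivity or generation at any level: quasi-fully-faithfulness alone does not force an isomorphism on Hochschild homology — one needs the image of $\text{Ind}$ to generate the target up to triangles and summands. Globally this is precisely the $\oo_Y(n)=\text{Ind}(\oo_X(n))$ step you omit; locally on an affine $U_I$ your appeal to ``invariance under quasi-equivalence'' silently assumes $\text{Ind}_{U_I}$ is essentially surjective on $H^0$, which still requires observing that $\oo_{Y\cap U_I}$ lies in the image and generates. A smaller inaccuracy: $\text{Ind}$ does not commute with restriction on the nose (the two tensor products are canonically isomorphic, not equal), which is exactly why the paper packages it as a lax morphism in Proposition \ref{laxind}; the lax machinery you also invoke absorbs this, so it is harmless, but the claim of strictness is false as stated.
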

\begin{proof}
For a dg category $\mathscr{C}$, let $\tilde{\mathscr{C}}$ denote its pretriangulated hull. We get a strictly commutative diagram of dg functors
$$\begin{tikzcd}\widetilde{\text{perf}_{\Cech}(\cala)}\arrow{r}{\widetilde{\text{Ind}}}&\widetilde{\text{perf}_{\Cech}(Y)}\\
\text{perf}_{\Cech}(\cala)\arrow{u}{}\arrow{r}{Ind}&\text{perf}_{\Cech}(Y)\arrow{u}{}\end{tikzcd}$$
where the vertical functors induce isomorphism in Hochschild homology (\cite{Keller}) and the right vertical arrow is a quasi equivalence since $\text{perf}_{\Cech}(Y)$ is pretriangulated. Therefore it is enough to check that $\widetilde{\text{Ind}}$ induces an isomorphism in Hochschild homology. We show that $\widetilde{\text{Ind}}$ is in fact a quasi equivalence.
To see that $\widetilde{\text{Ind}}$ induces a quasi isomorphism on homomorphism complexes we note that $\text{Ind}$ applied to hom-complexes can be factored as a composite of quasi isomorphisms: 
	\begin{align*}
	\begin{split}
		\Hom_{\text{\text{perf}}_{\Cech}(\cala)}(\mathcal{V}^\bullet\otimes \cala,\mathcal{W}^\bullet\otimes \cala)&= \Cech(\mathscr{U},\underline{\Hom}^\bullet_\cala(\mathcal{V}^\bullet\otimes \cala,\mathcal{W}^\bullet\otimes \cala))\\
		&\simeq \Cech(\mathscr{U},\underline{\Hom}^\bullet_{\oo_X}(\mathcal{V}^\bullet,\mathcal{W}^\bullet\otimes \cala))\\
		&\simeq \Cech(\mathscr{U},(\mathcal{V}^\vee)^\bullet\otimes \mathcal{W}^\bullet\otimes \cala)\\
		&\overset{\sim}{\to} \Cech(\mathscr{U},(\mathcal{V}^\vee)^\bullet\otimes \mathcal{W}^\bullet\otimes \oo_Y)\\
		& \simeq \Cech(\mathscr{U},\underline{\Hom}^\bullet_{\oo_X}(\mathcal{V}^\bullet,\mathcal{W}^\bullet\otimes \oo_Y))\\
		&\simeq \Cech(\mathscr{U},\underline{\Hom}^\bullet_{\oo_Y}(\mathcal{V}^\bullet\otimes \oo_Y,\mathcal{W}^\bullet\otimes \oo_Y))\\
		&\simeq \Hom_{\text{\text{perf}}_{\Cech}(Y)}(\text{Ind}(\mathcal{V}^\bullet\otimes \cala),\text{Ind}(\mathcal{W}^\bullet\otimes \cala))
		\end{split}
	\end{align*}
	This means that $H^0(\text{Ind})$ is fully faithful and therefore so is $H^0(\widetilde{\text{Ind}})$. Therefore, to show that $H^0(\widetilde{\text{Ind}})$ is is essentially surjective it is enough to check that the smallest triangulated subcategory of $H^0(\widetilde{\text{perf}_{\Cech}(Y)})$ containing the image of $H^0(\widetilde{\text{Ind}})$ is all of $H^0(\widetilde{\text{perf}_{\Cech}(Y)})$. 
	 Let $\oo_X(1)$ denote a very ample line bundle on $X$ and let $\oo_Y(1)=\oo_X(1)|_Y$. We know that $\{\oo_Y(n)\}_{n\in \Z}$ classically generates $H^0(\text{perf}_{\Cech}(Y))\simeq \widetilde{H^0(\text{perf}_{\Cech}(Y)})$. Essential surjectivity then follows from the observation that
	 $$\oo_Y(n)=\text{Ind}( \oo_X(n)).$$
\end{proof}

Let $\cala^\#$ denote the underlying sheaf of graded algebras of $\cala$ and $P^\#$ denote the underlying sheaf of $\Z/2$-graded $\oo_X$-modules of $P$. As graded $\oo_X$-modules we have $\cala^\#=P^\# $ and this gives us a multiplication $\mu:\cala^\#\otimes P^\#\to P^\#$. If $\mathcal{M}$ is an $\cala$-module we can thus make sense of the tensor product $\mathcal{M}\otimes_\cala P$ as a graded $\oo_X$-module. In fact, the morphism
\begin{align*}
	\mathcal{M}\otimes \cala\otimes P\to \mathcal{M}\otimes P, \ m\otimes a\otimes p\mapsto ma\otimes p-m\otimes ap
\end{align*}
whose cokernel by definition is the tensor product $\mathcal{M}\otimes_\cala P$ is a morphism of matrix factorizations of $f$. Therefore the product $\mathcal{M}\otimes_\cala P$ is naturally a matrix factorization of $f$. We get a dg functor 
\begin{align}\label{tensorwp}
	-\otimes_\cala P:\text{\text{perf}}_{\Cech}(\cala)\to \text{vect}_{\Cech}(X,f)
\end{align}
which on objects is defined by $\mathcal{V}\mapsto \mathcal{V}\otimes_X P$ and the chain maps on hom complexes is induced by the morphism of complexes of sheaves
$$\begin{tikzcd}
\underline{\Hom}^\bullet_{\cala}(\mathcal{V}\otimes \cala,\mathcal{W}\otimes \cala)\arrow{r}{-\otimes_\cala P}	&\underline{\Hom}^\bullet_{\oo_Y}(\mathcal{V}\otimes \cala\otimes_{\cala}P,\mathcal{W}\otimes \cala\otimes_{\cala}P)\arrow{dl}{\cong}\\
\underline{\Hom}^\bullet_{\oo_Y}(\mathcal{V}\otimes P,\mathcal{W}\otimes P).&
\end{tikzcd}
$$
\begin{lemma}\label{resolutionforoy}
	The following diagram commutes up to natural quasi isomorphism
	
	$$\begin{tikzcd}
		\text{\text{perf}}_{\Cech}(\cala)\arrow{d}{Ind}\arrow{r}{\otimes_\cala P}&\text{vect}_{\Cech}(X,f)\arrow{d}{Yoneda}\\
		\text{\text{perf}}_{\Cech}(Y)\arrow{r}{\underline{i}_*}&\text{\text{perf}}(\text{vect}_{\Cech}(X,f))
	\end{tikzcd}$$
	
\end{lemma}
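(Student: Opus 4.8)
The plan is to exhibit an explicit natural transformation $\eta\colon\text{Yoneda}\circ(-\otimes_\cala P)\Rightarrow\underline{i}_*\circ\text{Ind}$ and to check that each of its components is a quasi-isomorphism. The only input is the canonical morphism $\pi\colon P\to i_*\oo_Y$ in $Z^0(\text{fact}(X,f))$ recalled in Section \ref{P}, of which two features are used: it is $\cala$-linear for the action $\mu\colon\cala^\#\otimes P^\#\to P^\#$ (on the degree-$0$ part it is the dg-algebra quotient $\cala\twoheadrightarrow i_*\oo_Y$ and it vanishes in degree $1$), and its kernel is the matrix factorization $K$ of Section \ref{P}, which is zero in $D(\text{fact}(X,f))$.

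First I would unwind both composites on objects. An object of $\text{perf}_{\Cech}(\cala)$ is a finite complex of vector bundles $\mathcal{V}$ on $X$, standing for the free $\cala$-module $\mathcal{V}\otimes_{\oo_X}\cala$; hence $(-\otimes_\cala P)(\mathcal{V})=\mathcal{V}\otimes_{\oo_X}P$ as a matrix factorization of $f$, while $\text{Ind}(\mathcal{V})=\mathcal{V}\otimes_{\oo_X}i_*\oo_Y=i_*i^*\mathcal{V}$. Tensoring $\pi$ with $\mathcal{V}$ over $\oo_X$ yields a closed degree-$0$ morphism $\id_{\mathcal{V}}\otimes\pi\colon\mathcal{V}\otimes P\to\mathcal{V}\otimes i_*\oo_Y$ which sits in a short exact sequence of matrix factorizations with kernel $\mathcal{V}\otimes K$; since $\mathcal{V}$ is flat, $\mathcal{V}\otimes K\cong 0$ in $D(\text{fact}(X,f))$, so the associated distinguished triangle shows $\id_{\mathcal{V}}\otimes\pi$ is an isomorphism there. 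Passing through the Yoneda embedding, respectively through the dg model of the pushforward $\underline{i}_*$, this morphism provides the component $\eta_{\mathcal{V}}$, and it is a quasi-isomorphism of $\text{vect}_{\Cech}(X,f)$-modules.

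Next I would verify naturality and dg-compatibility. Both $-\otimes_\cala P$ and $\underline{i}_*\circ\text{Ind}$ act on an element $\phi\in\Cech(\mathscr{U},\underline{\Hom}^\bullet_\cala(\mathcal{V}\otimes\cala,\mathcal{W}\otimes\cala))$ by applying $-\otimes_\cala(-)$ patchwise with $P$, respectively with $i_*\oo_Y$, and then using the canonical identifications $\mathcal{V}\otimes\cala\otimes_\cala(-)\cong\mathcal{V}\otimes(-)$. Because $\pi$ is $\cala$-linear, the square with horizontal arrows $\phi\otimes_\cala P$ and $\phi\otimes_\cala i_*\oo_Y$ and vertical arrows $\id\otimes\pi$ commutes at the level of \v{C}ech--$\underline{\Hom}$ complexes; this is exactly the statement that the $\eta_{\mathcal{V}}$ assemble into a closed degree-$0$ natural transformation of dg functors, which together with the previous paragraph gives the commutativity up to natural quasi-isomorphism.

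The step I expect to be the main obstacle is the identification used in the second paragraph: realizing $\id_{\mathcal{V}}\otimes\pi$, after Yoneda, as an honest morphism into $\underline{i}_*(\text{Ind}\,\mathcal{V})$ for the given dg model of the pushforward quasi functor $\underline{i}_*$, rather than merely as an abstract isomorphism between two representatives of $i_*i^*\mathcal{V}$ in $D(\text{fact}(X,f))$. I would handle this by transporting along the quasi-equivalence $\text{Ind}$ of Lemma \ref{indfunctor} and using that $\underline{i}_*$ is characterized as a dg lift of $i_*\colon D^b(\text{coh}\,Y)\to D(\text{fact}(X,f))$: it then suffices to compare the two pseudo functors on the classical generators $\oo_Y(n)=\text{Ind}(\oo_X(n))$ of $\text{perf}_{\Cech}(Y)$, for which $\underline{i}_*\oo_Y(n)\simeq\oo_X(n)\otimes P$ is immediate from $\pi$, together with the naturality established above. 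The remaining verifications --- flatness of $\mathcal{V}$, the short-exact-sequence-to-triangle step, and the bookkeeping with the \v{C}ech identifications --- are routine.
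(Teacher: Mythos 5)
Your proposal is correct and follows essentially the same route as the paper: the paper's (very terse) proof consists precisely of observing that post-composition with the canonical map $\mathcal{V}\otimes P\to\mathcal{V}\otimes i_*\oo_Y$ induces a quasi-isomorphism $\Hom_{\text{vect}_{\Cech}(X,f)}(Q,\mathcal{V}\otimes P)\to\Hom_{\text{fact}_{\Cech}(X,f)}(Q,\mathcal{V}\otimes i_*\oo_Y)$ for every $Q$, which is exactly your natural transformation $\eta$ built from $\pi\colon P\to i_*\oo_Y$ with contractible kernel. Your worry in the last paragraph dissolves once one notes that the dg model for $\underline{i}_*(\text{Ind}\,\mathcal{V})$ is by construction the module $Q\mapsto\Hom_{\text{fact}_{\Cech}(X,f)}(Q,\mathcal{V}\otimes i_*\oo_Y)$, so no comparison on generators is needed.
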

\begin{proof}
	For any $Q\in \text{vect}_{\Cech}(X,f)$, the natural map $$\Hom_{\text{vect}_{\Cech}(X,f)}(Q,\mathcal{V}\otimes P)\to \Hom_{\text{fact}_{\Cech}(X,f) }(Q,\mathcal{V}\otimes \oo_Y)$$ is a quasi isomorphism where $\text{fact}_{\Cech}(X,f)$ is the DG category with objects same as $\text{fact}(X,f)$ and whose homomorphism complexes are defined in the same way as in $\text{vect}_{\Cech}(X,f)$.
\end{proof}
In fact, we can define similar functors
\begin{align*}\text{Ind}_U&:\text{perf}_{\Cech}(\cala|_U)\to \text{perf}_{\Cech}(Y\cap U),\\
-\otimes_{\cala|_U}P|_U&:\text{perf}_{\Cech}(\cala|_U)\to \text{vect}_{\Cech}(U,f|_U)
\end{align*}
for any open set $U$ and assemble them into lax morphisms of presheaves of dg categories.
\begin{prop}\label{laxind}
There are natural isomorphisms $$\alpha_{U,V,\mathcal{F}}: \text{Ind}_V(\mathcal{F}|_V)\implies \text{Ind}_U(\mathcal{F})|_V$$
and
$$\beta_{U,V,\mathcal{F}}:\mathcal{F}|_V\otimes_{\cala|_V}P|_V\implies (\mathcal{F}\otimes_{\cala|_U}P|_U)|_V$$
making $$(\text{Ind},\alpha):\underline{\text{perf}}_{\Cech}(\cala)\to \underline{\text{perf}}_{\Cech}(Y)$$ 
and 
$$(-\otimes_\cala P,\beta):\underline{\text{perf}}_{\Cech}(\cala)\to \underline{\text{vect}}_{\Cech}(X,f)$$
into lax morphisms of presheaves of dg categories.
\end{prop}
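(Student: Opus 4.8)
The plan is to exhibit $\alpha$ and $\beta$ as the canonical comparison isomorphisms coming from the fact that restriction of $\oo$-modules is symmetric monoidal and commutes with $i_*$, and then to verify the two properties required of a lax morphism: that these are isomorphisms of \emph{dg} functors, and that they satisfy the cocycle condition. For $\mathcal{F}\in\Ob(\underline{\text{perf}}_{\Cech}(\cala)(U))$, a finite complex of vector bundles on $U$, both $\text{Ind}_V(\mathcal{F}|_V)$ and $\Res_{UV}(\text{Ind}_U\mathcal{F})$ are obtained from $\mathcal{F}$ by the same three operations --- tensoring up to an $\cala$-module, tensoring over $\cala$ with $i_*\oo_Y$, and restricting to $V$ --- performed in a different order. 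I define $\alpha_{U,V,\mathcal{F}}$ to be the composite of the evident canonical isomorphisms $(\mathcal{F}|_V)\otimes(\cala|_V)\cong(\mathcal{F}\otimes\cala)|_V$, then $\big((-)|_V\big)\otimes_{\cala|_V}\big((i_*\oo_Y)|_V\big)\cong\big((-)\otimes_\cala i_*\oo_Y\big)|_V$, then $(i_*\oo_{Y\cap U})|_V\cong i_*\oo_{Y\cap V}$, composed with the fixed unit identification $\cala\otimes_\cala i_*\oo_Y\cong i_*\oo_Y$ used in the very definition of $\text{Ind}$. Each constituent is a degree-zero, $d$-closed, invertible morphism of complexes of vector bundles on $Y\cap V$, hence so is $\alpha_{U,V,\mathcal{F}}$. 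The isomorphism $\beta$ is defined word for word the same way with $P$ in place of $i_*\oo_Y$; here one additionally invokes the observation of the preceding paragraph that $\mathcal{F}\otimes_\cala P$ is naturally a matrix factorization of $f$, and that this matrix-factorization structure is compatible with restriction, so that $-\otimes_\cala P$ lands in $\underline{\text{vect}}_{\Cech}(X,f)$ and $\beta$ is a morphism there.

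Next I would check naturality, i.e.\ that $(\text{Ind},\alpha)$ really is a lax morphism of presheaves of dg categories and not just a pointwise one. This means that for a degree-$n$ morphism $\phi\in\Hom^n_{\text{perf}_{\Cech}(\cala|_U)}(\mathcal{F}\otimes\cala,\mathcal{G}\otimes\cala)=\Cech(\mathscr{U}\cap U,\underline{\Hom}^\bullet_{\cala|_U}(\mathcal{F}\otimes\cala,\mathcal{G}\otimes\cala))$ the square relating $\Res_{UV}\circ\text{Ind}_U$ and $\text{Ind}_V\circ\Res_{UV}$ commutes after conjugating by $\alpha$. Unwinding the definitions, this reduces to: (a) the commutativity of the square of complexes of sheaves whose horizontal arrows are ``apply the hom-sheaf map induced by $-\otimes_\cala i_*\oo_Y$'' and whose vertical arrows are ``restrict to $V$''; and (b) the compatibility of the Čech restriction-to-the-subcover map $\Cech(\mathscr{U}\cap U,-)\to\Cech(\mathscr{U}\cap V,-)$ with these sheaf maps. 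Both hold because every arrow in sight is induced sheaf-theoretically and locally, so restriction is a natural transformation between the functors $\underline{\Hom}_{\cala}(-\otimes\cala,-\otimes\cala)$ and $\underline{\Hom}_{\oo_Y}(-\otimes i_*\oo_Y,-\otimes i_*\oo_Y)$ over each cover. The identical chase gives naturality of $\beta$.

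Finally I would verify the cocycle condition: for $W\subset V\subset U$ one needs $\alpha_{U,W,\mathcal{F}}=\Res_{VW}(\alpha_{U,V,\mathcal{F}})\circ\alpha_{V,W,\mathcal{F}|_V}$, and similarly for $\beta$. Since $\alpha$ is assembled entirely out of the structure isomorphisms of the symmetric monoidal category of $\oo$-modules (associativity and unit constraints), the base-change isomorphisms $(i_*(-))|_V\cong i_*((-)|_{Y\cap V})$ along restrictions, and the single fixed unit isomorphism $\cala\otimes_\cala i_*\oo_Y\cong i_*\oo_Y$ --- all of which obey their own coherence and transitivity identities --- the required equality is a formal diagram chase, as is its analogue for $\beta$.

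The main obstacle I anticipate is the naturality step: the bookkeeping needed to see that the Čech restriction-to-a-subcover maps commute \emph{strictly} (not merely up to homotopy) with the hom-sheaf maps defining $\text{Ind}$ and $-\otimes_\cala P$, keeping the ordering of Čech indices and the attendant signs straight throughout. Everything else --- closedness and invertibility of the $\alpha_{U,V,\mathcal{F}}$, and the cocycle identity --- is formal coherence, but this compatibility with the Čech differential is where care is required.
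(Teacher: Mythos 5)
Your proposal is correct and takes essentially the same route as the paper: define $\alpha$ and $\beta$ as the canonical comparison isomorphisms for restriction versus tensoring/pushforward, and reduce the cocycle condition to coherence of these canonical maps (the paper makes the latter transparent by rewriting the relevant triangle in terms of pushforwards $(j_W)_*$, $(j_V)_*$). Your treatment is in fact more explicit than the paper's about naturality on morphism complexes and compatibility with the Čech restriction-to-subcover maps, which the paper leaves implicit.
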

\begin{proof}
For $Ind$ this is just the statement that for $W\subset U\subset V$ we have a commutative diagram of natural isomorphisms
$$\begin{tikzcd}
	\mathcal{V}|_W\otimes_W \oo_{Y\cap W}&(\mathcal{V}|_V\otimes_V \oo_{Y\cap V})|_W\arrow{l}{\cong}\\
	&(\mathcal{V}\otimes_X \oo_Y)|_{W}\arrow{lu}{\cong}\arrow{u}{\cong}.
\end{tikzcd}
$$	
It may be easier to see that the following diagram commutes
$$\begin{tikzcd}
	(j_W)_*(\mathcal{V}|_W\otimes_W \oo_{Y\cap W})&(j_V)_*(\mathcal{V}|_V\otimes_V \oo_{Y\cap V})\arrow{l}{}\\
	&\mathcal{V}\otimes_X \oo_Y\arrow{lu}{}\arrow{u}{}
\end{tikzcd}
$$	
where $j_W:W\to U$ and $j_V:V\to U$ are the inclusions.

The case of $\otimes_\cala P$ is similar.
\end{proof}

\subsection{HKR type map for $\mathcal{A}$}
Let $\Omega^k_X(\log Y)$ denote the algebraic $k$-forms on $X$ with logarithmic poles along $Y$. It is a subsheaf of $\Omega_X^k(Y)$ whose sections locally look like $\omega+\frac{dx}{x}\wedge\omega'$ where $\omega\in \Omega_X^k$, $\omega'\in \Omega^{k-1}_X$ and $x$ is a local equation for $Y$. There is a short exact sequence of complexes 
$$
\begin{tikzcd}	
	0\arrow{r}{}&(\Omega_X^\bullet,-df\wedge)[1]\arrow{r}{L}&(\Omega_X^\bullet(\log Y),-df\wedge)[1]\arrow{r}{}&(\Omega_Y^\bullet,0)\arrow{r}{}&0
\end{tikzcd}
$$
\begin{align}\label{SES}
\end{align}
where the first map is the inclusion and the second map is locally defined by $\beta+\frac{dx}{x}\wedge \alpha \mapsto \alpha|_Y$.
It gives rise to a quasi isomorphism 
\begin{align}
	\begin{split}\label{qis2}
		\Omega_\cala^{\bullet}:=\text{Cone}(L)\overset{\sim}{\to}\Omega^\bullet_Y.
	\end{split}
\end{align}

We will construct a commutative diagram
$$\begin{tikzcd}
	{\Cech}(\mathscr{U},\underline{C}_\bullet(\oo_Y))\arrow{d}{HKR_Y}&{\Cech}(\mathscr{U},\underline{C}_\bullet(\cala))\arrow{d}{HKR_\cala}\arrow{l}{}\\
	{\Cech}(\mathscr{U},\Omega_Y^\bullet)&{\Cech}(\mathscr{U},\Omega_{\cala}^\bullet)\arrow{l}{}.
\end{tikzcd}$$
\begin{align}\label{hkrdiagram}
\end{align}
The horizontal arrows are induced by the quasi isomorphisms $\cala\to \oo_Y$  and (\ref{qis2}) respectively and the left vertical arrow comes from the classical HKR theorem. It remains to define the map $HKR_\cala$. Let $x_i\in \oo_X(U_i)$ be local equations for $Y\subset X$. Let $u_{ij}=x_jx_i^{-1}$ on the intersections $U_{ij}$. Then we identify $\cala|_{U_j}\cong [\oo_{U_i}\epsilon_i\to\oo_{U_i}]$ where $\epsilon_i$ is in degree $-1$ and the differential maps $\epsilon_i$ to $x_i$. Note that on the intersections we have $u_{ij}\epsilon_i=\epsilon_j$. Let $U_I=U_{i_0\cdots i_p}$ and let $\bar{a}:=a_0[a_1|\cdots |a_k]\in {\cala^0(U_I)}^{\otimes k+1}\subset C_k(\cala(U_I))$. Then define
\begin{align*}
\begin{split}
	HKR_\cala(\bar{a})&:=\frac{1}{k!}a_0 \frac{dx_{i_0}}{x_{i_0}}\wedge da_1\wedge...\wedge da_k\\
	&+\frac{1}{k!}a_0dx_{i_0}\wedge dg_{i_0}\wedge da_1\wedge...\wedge da_k\\
	&-\sum_{j<i_0}\frac{(-1)^p}{k!}a_0\frac{du_{i_0j}}{u_{i_0j}}\wedge da_1\wedge...\wedge da_k\\
	&\in \Gamma\Big(U_I,\Omega^{k+1}_X(\log Y))\oplus \Gamma(U_I,\Omega_X^{k+2})\oplus  (\bigoplus_{j<i_0}\Gamma(U_{j\cup I},\Omega_X^{k+1})\Big)
	\end{split}
\end{align*}
and
\begin{align*}
	\begin{split}
		HKR_\cala(a_0[a_1|\cdots|a_l\epsilon_{i_0}|\cdots|a_k]):= \frac{(-1)^{l}}{k!}a_0dx_{i_0}\wedge da_1\wedge ...\wedge da_k.
	\end{split}
\end{align*}
and $HKR_\cala$ vanishes on elements which have two or more factors from $\cala^{-1}$.
\begin{prop}\label{proponhkra}The following is true about the map $HKR_\cala$\\

	1) $HKR_\cala$ is a chain map.\\
	
	2) The diagram (\ref{hkrdiagram}) commutes.\\
	
	3) $HKR_\cala$ is a quasi isomorphism.
\end{prop}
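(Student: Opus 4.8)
The plan is to verify the three assertions in order, since 3) will be deduced from 2) together with facts already established. For 1), the cleanest route is to treat $\cala$ as a presheaf-free local computation: fix an affine $U_I$ and check directly that $HKR_\cala$ commutes with each of the three pieces of the Hochschild differential. The curved part $d_0$ contributes terms involving the curvature $h$; but $\cala$ is an honest dg algebra (its curvature is zero), so $d_0$ vanishes and only $d_1$ (the internal differential of $\cala$) and $d_2$ (the bar differential) survive. For $d_2$ the key identities are the Leibniz rule $d(ab)=a\,db+(da)b$ inside $\Omega_X^\bullet$ together with the cyclic sign bookkeeping, exactly as in the classical proof that the antisymmetrization map $a_0[a_1|\cdots|a_k]\mapsto \tfrac1{k!}a_0\,da_1\wedge\cdots\wedge da_k$ is a chain map for a commutative algebra; the new ingredients are the logarithmic correction terms $\tfrac{dx_{i_0}}{x_{i_0}}$, the ``$dx_{i_0}\wedge dg_{i_0}$'' term and the \v{C}ech correction $-\sum_{j<i_0}\tfrac{du_{i_0 j}}{u_{i_0 j}}$, which must be matched against $d_1$ applied to tensor factors lying in $\cala^{-1}$ (i.e. factors $a_l\epsilon_{i_0}$, whose differential is $a_l x_{i_0}$) and against $d_{\Cech}$. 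I would organize the check by the number of $\cala^{-1}$-factors: zero, one, or ($\geq 2$, where everything is zero by definition), and in each case peel off $d_1$, $d_2$ and $d_{\Cech}$ separately. The term $a_0\,dx_{i_0}\wedge dg_{i_0}\wedge da_1\wedge\cdots$ is there precisely to account for the fact that $f=g_{i_0}x_{i_0}$ and that the target complex carries the differential $-df\wedge(-)$; one checks $-df\wedge\bigl(\tfrac1{k!}a_0\tfrac{dx_{i_0}}{x_{i_0}}\wedge da_1\wedge\cdots\bigr)$ cancels against the image of the corresponding bar term.

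For 2), commutativity of (\ref{hkrdiagram}) is essentially a diagram chase once one writes down the two horizontal maps explicitly. The top horizontal map is induced by the quasi-isomorphism $\cala\to\oo_Y$ of sheaves of dg algebras, which on $C_\bullet$ kills any tensor word containing an $\cala^{-1}$-factor and is the obvious projection $\cala^0=\oo_X\twoheadrightarrow\oo_Y$ on words with only $\cala^0$-factors. The bottom horizontal map is induced by the quasi-isomorphism (\ref{qis2}) $\Omega_\cala^\bullet=\mathrm{Cone}(L)\xrightarrow{\sim}\Omega_Y^\bullet$, which by the local description of $L$ sends $\beta+\tfrac{dx}{x}\wedge\alpha$ (together with its cone-direction components) to $\alpha|_Y$. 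So I must check that, on a word $a_0[a_1|\cdots|a_k]$ with all factors in $\cala^0$, the element $HKR_\cala(\bar a)$ has $\tfrac{dx_{i_0}}{x_{i_0}}$-component equal to $\tfrac1{k!}\bar a_0\,d\bar a_1\wedge\cdots\wedge d\bar a_k$ (the image of $HKR_Y$ of the projected word) modulo things killed by the cone map, and that on words with an $\cala^{-1}$-factor the composite $\Cech(\mathscr U,\underline C_\bullet(\cala))\to\Cech(\mathscr U,\Omega_Y^\bullet)$ through the top-left corner is zero, matching the fact that such words map under $HKR_\cala$ into the $\Omega_X^\bullet$ (non-log, non-cone) part which the cone map sends to zero. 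This is bookkeeping, with the only subtlety being compatibility of signs and of the \v{C}ech correction term with the \v{C}ech differential identification.

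For 3), I would not prove it by hand: instead, observe that every other arrow in the commutative square (\ref{hkrdiagram}) is a quasi-isomorphism. The top arrow $\Cech(\mathscr U,\underline C_\bullet(\cala))\to\Cech(\mathscr U,\underline C_\bullet(\oo_Y))$ is a quasi-isomorphism because it is so termwise — for each affine $V\subset X$ the map $C_\bullet(\cala|_V)\xrightarrow{\sim}C_\bullet(\oo_Y|_V)$ was noted in Section~\ref{P} to be a quasi-isomorphism (it is $HH_\bullet$ of a quasi-isomorphism of dg algebras over an affine base) — and a termwise quasi-isomorphism of presheaves of complexes induces a quasi-isomorphism on \v{C}ech totalizations (e.g.\ by the usual filtration/Mapping Theorem argument used already for map $(3)$ in (\ref{totalhkrf})). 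The left arrow $HKR_Y$ is the classical HKR quasi-isomorphism, resolved via \v{C}ech exactly as in Proposition~\ref{hkrxf} with $f=0$. The bottom arrow is a quasi-isomorphism because (\ref{qis2}) is a quasi-isomorphism of complexes of sheaves and again \v{C}ech totalization preserves this. By 1) and 2) the square commutes in the category of complexes, so two-out-of-three forces $HKR_\cala$ to be a quasi-isomorphism.

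\textbf{Main obstacle.} The genuinely laborious part is 1): the sign conventions for the three-term \v{C}ech--Hochschild differential ($d_1+d_2+d_{\Cech}$, with the $(-1)^p$ twist on $\bar d_\ff$ from Section~\ref{notation}) interact nontrivially with the antisymmetrization signs $\tfrac1{k!}$, with the $(-1)^l$ in the $\cala^{-1}$-case, and with the $(-1)^p$ in the \v{C}ech correction term $-\sum_{j<i_0}\tfrac{du_{i_0 j}}{u_{i_0 j}}\wedge\cdots$. The conceptual content is small — it is the classical HKR chain-map computation plus the observation that $\tfrac{dx_{i_0}}{x_{i_0}}-\tfrac{dx_{j}}{x_{j}}=\tfrac{du_{i_0 j}}{u_{i_0 j}}$ handles the change of local equation across a \v{C}ech face, and $f=g_{i_0}x_{i_0}$ handles the curvature/$df$ term — but making all the signs line up is where the real work lies, and I would budget most of the proof's length for exactly that.
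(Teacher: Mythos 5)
Your proposal is correct and follows essentially the same route as the paper: part 1 is verified by a direct term-by-term computation organized by the number of $\cala^{-1}$-factors (zero, one, $\geq 2$) with $\bar d_1$, $\bar d_2$, $d_{\Cech}$ and the cone differential treated separately, part 2 is a check from the definitions, and part 3 is deduced from part 2 by two-out-of-three since all other arrows in (\ref{hkrdiagram}) are quasi-isomorphisms. Your identification of the key cancellations ($f=g_{i_0}x_{i_0}$ against the $-df\wedge$ differential, and $\frac{dx_{i_0}}{x_{i_0}}-\frac{dx_j}{x_j}=\frac{du_{i_0j}}{u_{i_0j}}$ against the \v{C}ech faces) matches exactly what drives the paper's computation.
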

\begin{proof}
	We check that $HKR_\cala$ is a chain map at the level of presheaves. We do this first on elements of internal degree zero. As before let $U_I=U_{i_0\cdots i_p}$ and let $\bar{a}:=a_0[a_1|\cdots |a_k]\in \cala^0(U_I)^{\otimes k+1}\subset C_k(\cala(U_I))$.
	
\begin{align}
	\begin{split}\label{formula1}
		HKR_\cala(\bar{d}_2\bar{a})&=0\\
		HKR_\cala(\bar{d}_1\bar{a})&=0\\
		HKR_\cala(d_{Cech}\bar{a})&=HKR_\cala\Big(\sum_{j\notin I}(-1)^{Sgn(j,I)}\bar{a}|_{U_{I\cup j}}\Big)\\
		&=HKR_\cala\Big(\sum_{j<i_0}\bar{a}|_{U_{I\cup j}}+\sum_{j\notin I, \ j>i_0}(-1)^{Sgn(j,I)}\bar{a}|_{U_{I\cup j}}\Big)\\
		&=\frac{1}{k!}\sum_{j<i_0}a_0\frac{dx_{j}}{x{j}}\wedge da_1\wedge...\wedge da_k|_{j\cup I} \\
		&+\frac{1}{k!}\sum_{j<i_0}a_0dx_{j}\wedge dg_{j}\wedge da_1\wedge...\wedge da_k|_{j\cup I}\\
		&-\frac{1}{k!}\sum_{j'<j<i_0}(-1)^{p+1}a_0\frac{du_{jj'}}{u_{jj'}}\wedge da_1\wedge ...\wedge da_k|_{j'\cup j\cup I}\\
		&+HKR_\cala \Big(\sum_{j\notin I, \ j>i_0}(-1)^{Sgn(j,I)}\bar{a}|_{U_{I\cup j}}\Big).
	\end{split}
\end{align}
	On the other hand we have
	\begin{align}
		\begin{split}\label{formula2}
			d_{Cech}HKR_\cala(\bar{a})&=\frac{1}{k!}\sum_{j<i_0}a_0 \frac{dx_{i_0}}{x_{i_0}}\wedge da_1\wedge...\wedge da_k|_{j\cup I}\\
	&+\frac{1}{k!}\sum_{j<i_0}a_0 dx_{i_0}\wedge dg_{i_0}\wedge da_1\wedge...\wedge da_k|_{j\cup I}\\
	&-\frac{1}{k!}\sum_{j<j'<i_0}(-1)^pa_0\Big(\frac{du_{i_0j'}}{u_{i_0j'}}-\frac{du_{i_0j}}{u_{i_0j}}\Big)\wedge da_1\wedge...\wedge da_k|_{j\cup j'\cup I}\\
	&+\sum_{j\notin I,j>i_0}HKR_\cala(\bar{a})|_{{j\cup I}}
		\end{split}
	\end{align}
and
\begin{align}
		\begin{split}\label{formula3}
			\bar{d}_{Cone}HKR_\cala(\bar{a})&=(-1)^{p}a_0df\wedge \frac{dx_{i_0}}{x_{i_0}}\wedge da_1\wedge...\wedge da_k\\
	&+(-1)^{p}L(a_0 dx_{i_0}\wedge dg_{i_0}\wedge da_1\wedge...\wedge da_k)\\
	&+\sum_{j<i_0}a_0df\wedge \frac{du_{i_0j}}{u_{i_0j}}\wedge da_1\wedge...\wedge da_k|_{j\cup I}\\
	&+\sum_{j<i_0}L(a_0 \frac{du_{i_0j}}{u_{i_0j}}\wedge da_1\wedge...\wedge da_k)|_{j\cup I}
		\end{split}
	\end{align}
From here one can check that $(\bar{d}_{Cone}+d_{Cech})HKR_\cala(\bar{a})=HKR_\cala(d_{Cech}+\bar{d}_1+\bar{d}_2)(\bar{a})$. Indeed the first sum in (\ref{formula1}) cancels with with the first sum in (\ref{formula2}) and the last sum in (\ref{formula3}). The second sum in (\ref{formula1}) cancels with the second sum in (\ref{formula2}) and the sum on the third row of (\ref{formula3}). The third sum in (\ref{formula1}) cancels with the third sum in (\ref{formula2}). The last sum in (\ref{formula1}) cancels with the last sum in (\ref{formula2}). Finally the first two terms in (\ref{formula3}) cancel with each other.

Next, let's check that $(\bar{d}_{Cone}+d_{Cech})HKR_\cala(\bar{b})=HKR_\cala(d_{Cech}+\bar{d}_1+\bar{d}_2)(\bar{b})$ where $\bar{b}=b_0[b_1|\cdots|b_l\epsilon_{i_0}|\cdots |b_k]$ has internal degree $-1$. We have
\begin{align*}
	\begin{split}
		HKR_\cala(\bar{d}_2\bar{b})&=0\\
		HKR_\cala(\bar{d}_1\bar{b})&=(-1)^{l+p}\frac{1}{k!}b_0\frac{dx_{i_0}}{x_{i_0}}\wedge db_1\wedge...\wedge d(b_lx_{i_0})\wedge...\wedge db_k\\
		&+(-1)^{l+p}\frac{1}{k!}b_0dx_{i_0}\wedge dg_{i_0}\wedge db_1\wedge...\wedge d(b_lx_{i_0})\wedge...\wedge db_k\\
		&-\sum_{j<i_0}(-1)^{l}\frac{1}{k!} b_0\frac{du_{i_0j}}{u_{i_0j}}\wedge db_1\wedge...\wedge d(b_lx_{i_0})\wedge...\wedge db_k\\
		HKR_\cala(d_{Cech}\bar{b})&=\sum_{j\notin I, \ j>i_0}(-1)^{l+Sgn(j,I)}\frac{1}{k!}b_0dx_{i_0}\wedge db_1\wedge...\wedge db_l\wedge...\wedge db_k\\
		&+\sum_{j<i_0}(-1)^{l}\frac{1}{k!}b_0dx_{j}\wedge db_1\wedge...\wedge d(u_{ji_0}b_l)\wedge...\wedge db_k
	\end{split}
\end{align*}
On the other hand we have
\begin{align*}
	\begin{split}
		d_{Cech}HKR_\cala(\bar{a})&=\sum_{j\notin I}(-1)^{Sgn(j,I)+l}\frac{1}{k!}b_0dx_{i_0}\wedge db_1\wedge...\wedge db_l\wedge...\wedge db_k\\
		\bar{d}_{Cone}HKR_\cala(\bar{b})&=(-1)^{l+p+1}\frac{1}{k!}b_0df\wedge dx_{i_0}\wedge db_1\wedge...\wedge db_l\wedge...\wedge db_k\\
		&+(-1)^{l+p}\frac{1}{k!}L(b_0dx_{i_0}\wedge db_1\wedge...\wedge db_l\wedge...\wedge db_k)
	\end{split}
\end{align*}
Again the relevant equality can be checked from these formulas. This proves the first part of the proposition. The last thing one has to check for the first part is that  $HKR_\cala\circ\bar{d}_1$ vanishes on elements with two factors from $\cala^{-1}$.

The second part can be checked from the definitions. The last part then follows from the second because all the other arrows in the diagram are quasi isomorphisms.
\end{proof}

\subsection{Multiplying by the Todd class}\label{inversetodd}
We have the wedge product map $\Omega^\bullet_X\otimes \Omega_X^\bullet\to \Omega_X^\bullet$ which we can use to make $\Cech(\mathscr{U},\Omega_X^\bullet)$ into a dg algebra by defining
\begin{align*}
	(\alpha\wedge \beta)_{i_0i_1\cdots i_{p+q}}:=(\alpha_{i_0\cdots i_p}\wedge\beta_{i_p\cdots i_{p+q}})|_{U_{i_0\cdots i_{p+q}}}
\end{align*}
where $\alpha\in \Cech{^p}(\mathscr{U},\Omega^\bullet_X)$ and $\beta\in \Cech{^q}(\mathscr{U},\Omega_X^\bullet)$.
The same formula makes \\$\Cech(\mathscr{U},\Omega_X^\bullet(\log Y))$ into a left $\Cech(\mathscr{U},\Omega_X^\bullet)$ module. We will need a slight variation of this. Define
\begin{align*}
	(\alpha\overline{\wedge}\beta)_{i_0i_1\cdots i_{p+q}}:=(-1)^{pq}(\beta\wedge\alpha)_{i_0i_1\cdots i_{p+q}}.
\end{align*}
Then $\Cech(\mathscr{U},\Omega^\bullet_X)$ is a dg algebra with the multiplication $\bar{\wedge}$ as well. Similarly we define two dg algebra structures on $\Cech(\mathscr{U},\Omega^\bullet_Y)$. Moreover, the natural map $\Cech(\mathscr{U},\Omega_X^\bullet)\to \Cech(\mathscr{U},\Omega_Y^\bullet)$ is a dg algebra homomorphism with respect to either $\wedge$ or $\overline{\wedge}$. Now, we define a right $\Cech(\mathscr{U},\Omega_X^\bullet)$-module structure on $\Cech(\mathscr{U},\Omega_X^\bullet\oplus \Omega_X^\bullet(\log Y)[1])$ by 
\begin{align}\label{modulestructureonomegal}
(\alpha+\beta)	\bar{\wedge}\gamma:=(-1)^{pq}\gamma\wedge \alpha+(-1)^{(p+1)q}\gamma\wedge \beta
\end{align}
for $\alpha\in \Cech{^p}(\mathscr{U},\Omega_X^\bullet)$, $\beta\in \Cech{^p}(\mathscr{U},\Omega_X^\bullet(\log Y)[1])$ and $\gamma\in \Cech{^q}(\mathscr{U},\Omega_X^\bullet).$

The Cech cocycle $(u_{ij}^{-1})_{j<i}\in \Cech{^1}(\mathscr{U},\oo_X^\times)$ corresponds to the line bundle $\oo_X(-Y)$. The Chern class $c_1(-Y)$ is then the Cech cocycle $(u_{ij}{du^{-1}_{ij}})_{j<i}\in\Cech{^1}(\mathscr{U},\Omega^1_X)$. The Todd class of $\oo_X(-Y)$ is defined as the formal power series $\frac{x}{e^x-x}$ evaluated at $c_1(-Y)$. It's inverse is
\begin{align*}
	\text{Td}(-Y)^{-1}:=\sum_{q\geq 0}\frac{c_1(-Y)^{\overline{\wedge}q}}{(q+1)!}=\sum_{q\geq 0}(-1)^{\binom{q}{2}}\frac{c_1(-Y)^{\wedge q}}{(q+1)!}.
\end{align*}
\begin{prop}\label{propontodd}
The map 
\begin{align*}
	(-)\overline{\wedge}\text{Td}(-Y)^{-1}:\Cech(\mathscr{U},\Omega_\cala^\bullet)\to \Cech(\mathscr{U},\Omega^\bullet_\cala)
\end{align*}
commutes with the differentials and gives rise to a commutative diagram
$$\begin{tikzcd}
	\Cech(\mathscr{U},\Omega_Y^\bullet)\arrow{d}{(-)\overline{\wedge}Td(-Y)^{-1}}&\Cech(\mathscr{U},\Omega_\cala^\bullet)\arrow{l}{}\arrow{d}{(-)\overline{\wedge}Td(-Y)^{-1}}\\
	\Cech(\mathscr{U},\Omega_Y^\bullet)&\Cech(\mathscr{U},\Omega_\cala^\bullet)\arrow{l}{}
\end{tikzcd}$$
\end{prop}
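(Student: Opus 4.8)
The plan is to recognize both vertical maps as right multiplication by a single cocycle and to reduce the whole statement to module--linearity. Recall from \S\ref{inversetodd} that $\Cech(\mathscr{U},\Omega_X^\bullet)$ is a dg algebra under $\bar{\wedge}$, that $\Cech(\mathscr{U},\Omega_\cala^\bullet)$ carries a right $\Cech(\mathscr{U},\Omega_X^\bullet)$--module structure via (\ref{modulestructureonomegal}), and that $\Cech(\mathscr{U},\Omega_Y^\bullet)$ becomes a right $\Cech(\mathscr{U},\Omega_X^\bullet)$--module through the restriction dg algebra homomorphism $\Cech(\mathscr{U},\Omega_X^\bullet)\to\Cech(\mathscr{U},\Omega_Y^\bullet)$. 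In these terms the two vertical arrows are $m\mapsto m\,\bar{\wedge}\,\mathrm{Td}(-Y)^{-1}$ and its analogue for the image of $\mathrm{Td}(-Y)^{-1}$ under restriction, while the horizontal arrows $\phi$ are the chain maps induced by the quasi--isomorphism (\ref{qis2}). The key structural point is that the differential on $\Cech(\mathscr{U},\Omega_\cala^\bullet)$ splits as $d_{Cech}+\bar{d}_{Cone}$, where the internal part $\bar{d}_{Cone}$---assembled from $-df\wedge(-)$ on each summand of the cone and from the inclusion $L$---commutes with right multiplication by the relevant components of $\mathrm{Td}(-Y)^{-1}$ (because $-df\wedge(-)$ acts from the left and $L$ is the natural inclusion), whereas $d_{Cech}$ is a graded derivation of the module. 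So the only obstruction to $(-)\,\bar{\wedge}\,\mathrm{Td}(-Y)^{-1}$ being a chain map is the term $\pm\,m\,\bar{\wedge}\,d_{Cech}\big(\mathrm{Td}(-Y)^{-1}\big)$.

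Thus the first thing I would prove is that $\mathrm{Td}(-Y)^{-1}$ is $d_{Cech}$--closed. It suffices that the Cech $1$--cochain $c_1(-Y)=(u_{ij}\,du_{ij}^{-1})_{j<i}$ is a $d_{Cech}$--cocycle, and this follows at once from the cocycle identity $u_{ik}=u_{ij}u_{jk}$, which upon differentiating yields $u_{ik}\,du_{ik}^{-1}=u_{ij}\,du_{ij}^{-1}+u_{jk}\,du_{jk}^{-1}$, i.e. exactly the vanishing of the Cech coboundary. Since $d_{Cech}$ is a derivation for $\bar{\wedge}$, every power $c_1(-Y)^{\bar{\wedge}q}$ is $d_{Cech}$--closed, hence so is the finite sum $\mathrm{Td}(-Y)^{-1}=\sum_{q\ge0}c_1(-Y)^{\bar{\wedge}q}/(q+1)!$ and, by functoriality of restriction, so is its image in $\Cech(\mathscr{U},\Omega_Y^\bullet)$. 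Combined with the previous paragraph this gives the chain map assertion on $\Cech(\mathscr{U},\Omega_\cala^\bullet)$; the analogous (and easier, the internal differential now being zero) argument gives it on $\Cech(\mathscr{U},\Omega_Y^\bullet)$.

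For the square I would reduce commutativity to the statement that $\phi\colon\Cech(\mathscr{U},\Omega_\cala^\bullet)\to\Cech(\mathscr{U},\Omega_Y^\bullet)$ is a morphism of right $\Cech(\mathscr{U},\Omega_X^\bullet)$--modules: granting this, applying $\phi$ to $m\,\bar{\wedge}\,\mathrm{Td}(-Y)^{-1}$ and using that the module action on the target factors through restriction immediately produces $\phi(m)\,\bar{\wedge}\,\mathrm{Td}(-Y)^{-1}$. Right--linearity of $\phi$ is a local check: $\phi$ is the projection of the cone onto the $\Omega_X^\bullet(\log Y)$ summand followed by the residue map $\beta+\tfrac{dx}{x}\wedge\alpha\mapsto\alpha|_Y$, and for a form $\gamma$ on $X$ one has $\gamma\wedge\big(\beta+\tfrac{dx}{x}\wedge\alpha\big)=\gamma\wedge\beta+(-1)^{|\gamma|}\tfrac{dx}{x}\wedge(\gamma\wedge\alpha)$, so that multiplying by $\gamma$ and then taking residues agrees, up to the expected Koszul sign, with taking residues and then multiplying by $\gamma|_Y$.

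I expect the only real difficulty to be the sign bookkeeping: one must reconcile the twists $(-1)^{pq}$ and $(-1)^{(p+1)q}$ built into the module structure (\ref{modulestructureonomegal}), the $(-1)^{pq}$ defining $\bar{\wedge}$, the $[1]$--shifts appearing in (\ref{SES}), and the $(-1)^p$ convention for $\bar{d}$ from \S\ref{notation}, so that the right linearity of both $\bar{d}_{Cone}$ and $\phi$---and hence both assertions of the proposition---hold strictly rather than merely up to homotopy. This is considerably eased by the observation that every component of $\mathrm{Td}(-Y)^{-1}$ used here lies in $\Cech^q(\mathscr{U},\Omega_X^q)$, so that its internal degree equals its Cech degree and several of the competing signs collapse.
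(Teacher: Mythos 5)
Your proposal is correct and reaches the same two verifications the paper makes, but it organizes them more conceptually. The paper's own proof is a direct term-by-term expansion: it writes out $d(\alpha)\bar{\wedge}c_1(-Y)^{\bar{\wedge}q}$ and $d(\beta)\bar{\wedge}c_1(-Y)^{\bar{\wedge}q}$ separately for $\alpha$ in the $\Omega_X^\bullet$ summand and $\beta$ in the $\Omega_X^\bullet(\log Y)$ summand of the cone, and matches each resulting term against $d\big((-)\bar{\wedge}c_1(-Y)^{\bar{\wedge}q}\big)$; the fact that $c_1(-Y)$ is a \v{C}ech cocycle (your identity $u_{ik}\,du_{ik}^{-1}=u_{ij}\,du_{ij}^{-1}+u_{jk}\,du_{jk}^{-1}$) is used silently in the step where $d_{Cech}$ is pulled through $c_1(-Y)^{\bar{\wedge}q}$, and commutativity of the square is dismissed with ``can be checked immediately from the definitions.'' You instead isolate the two structural inputs --- $d_{Cech}$-closedness of $\mathrm{Td}(-Y)^{-1}$ and graded-commutation of $L$ and $-df\wedge(-)$ with the right module action --- and reduce the square to right-linearity of the residue map, which is a genuinely more informative account of \emph{why} the computation closes. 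One caution: ``$\bar{d}_{Cone}$ commutes with right multiplication'' is only true up to the Koszul sign $(-1)^q$ from moving $df$ past a form of degree $q$, which is exactly cancelled by the shift $(-1)^p\mapsto(-1)^{p+q}$ in the convention $\bar{d}_{\mathcal F}=(-1)^{\text{\v{C}ech degree}}d_{\mathcal F}$ of section \ref{notation}; your closing remark that the Todd components live in $\Cech^q(\mathscr{U},\Omega_X^q)$ and hence have even total degree is the right observation for making this collapse, so the deferred sign bookkeeping --- which is the entire content of the paper's written proof --- does go through as you expect. No gap.
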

\begin{proof}
	First we check that $(-)\bar{\wedge}Td(-Y)^{-1}$ commutes with the differential on $\Cech(\Omega_\cala^\bullet)$. Let $\alpha\in \Cech{^p}(\mathscr{U},\Omega_X^k)\subset \Cech(\mathscr{U},\Omega_\cala^\bullet)$. Then 
	\begin{align*}
	\begin{split}
		d(\alpha)\bar{\wedge}c_1(-Y)^{\bar{\wedge}q}=&((-1)^pL(\alpha)+d_{Cech}(\alpha)-(-1)^pdf\wedge\alpha)\bar{\wedge}c_1(-Y)^{\bar{\wedge}q}\\
		=&(-1)^{p+(p+1)q}c_1(-Y)^{\bar{\wedge}q}\wedge L(\alpha)\\
		&+(-1)^{(p+1)q}c_1(-Y)^{\bar{\wedge} q}\wedge d_{Cech}(\alpha)\\
		&-(-1)^{p+pq}c_1(-Y)^{\bar{\wedge}q}\wedge df\wedge\alpha\\
		=&(-1)^{p+(p+1)q}L(c_1(-Y)^{\bar{\wedge}q}\wedge \alpha)\\
		&+(-1)^{pq}d_{Cech}(c_1(-Y)^{\bar{\wedge} q}\wedge \alpha)\\
&-(-1)^{p+q+pq}df\wedge c_1(-Y)^{\bar{\wedge}q}\wedge \alpha\\
=&d(\alpha\bar{\wedge}c_1(-Y)^{\bar{\wedge}q})
		\end{split}
	\end{align*}
	Similarly if $\beta\in \Cech{^p}(\mathscr{U},\Omega^k_X(\log Y))\subset \Cech(\mathscr{U},\Omega^\bullet_\cala)$ we check that
	\begin{align*}
		\begin{split}
			d(\beta)\bar{\wedge}c_1(-Y)^{\bar{\wedge}q}=&(-1)^pdf\wedge\beta\bar{\wedge}c_1(-Y)^{\bar{\wedge}q}+d_{Cech}(\beta)\bar{\wedge}c_1(-Y)^{\bar{\wedge}q}\\
			=&(-1)^{p+(p+1)q}c_1(-Y)^{\bar{\wedge}q}\wedge df\wedge\beta\\
			&+(-1)^{(p+2)q}c_1(-Y)^{\bar{\wedge}q}\wedge d_{Cech}(\beta)\\
			=&(-1)^{p+pq}df\wedge c_1(-Y)^{\bar{\wedge}q}\wedge \beta\\
			+&(-1)^{(p+2)q-q}d_{Cech}(c_1(-Y)^{\bar{\wedge}q}\wedge \beta)\\
			=&d(\beta\bar{\wedge}c_1(-Y)^{\bar{\wedge}q}).
		\end{split}
	\end{align*}
Commutativity of the diagram can be checked immediately from the definitions.
\end{proof}

\subsection{Trace map for a small cdg category of quasi modules}\label{constructingphi}
Let $\{P_1,...,P_r\}$ be a collection of objects in $\text{qvect}(X,f)$ and let $\mathscr{C}$ be the presheaf of cdg categories defined by $U\mapsto \langle P_1|_U,...,P_r|_U\rangle$
where the latter refers to the full sub cdg category of $\text{qvect}(U,f|_U)$. Assume that we have trivialisations $P_i|_{U_j}\cong \oo_{U_j}^{n_i}\oplus \oo_{U_j}^{m_i}$. Our goal is to define a map 
\begin{align}\begin{split}\label{goalisphi}
\phi:\Cech(\mathscr{U},\underline{C}^{II}_\bullet(\mathscr{C}))\to \Cech(\mathscr{U},\underline{C}^{II}_\bullet(\oo_{-f}))\end{split}\end{align}
such that if $(P_i,\delta_{P_i})=(\oo_X,0)$ for some $1\leq i\leq r$ then $\phi$ is a quasi inverse to the quasi isomorphism
\begin{align}\label{yon}\Cech(\mathscr{U},\underline{C}_\bullet^{II}(\oo_{-f}))\to \Cech(\mathscr{U},\underline{C}^{II}_\bullet(\mathscr{C}))\end{align} induced by the Yoneda functor.\\

Given a morphism $\alpha:P_i(U_l)\to P_j(U_l)$ we will denote by $(\alpha)_l$ the matrix defined by 
$$\oo_{U_l}^{n_i}\oplus \oo_{U_l}^{m_i}\cong P_i\rightarrow P_j\cong \oo_{U_l}^{n_j}\oplus \oo_{U_l}^{m_j}.$$
Let $g_{kl}$ denote the change of basis matrix such that 
\begin{align*}\begin{split}g_{kl}(\alpha)_lg_{kl}^{-1}=(\alpha)_k\end{split}\end{align*}
on $U_k\cap U_l$. 

We will also need the preasheaf of cdg categories $\mathscr{F}$ which to an open set $U$ assigns the subcategory of $\text{qvect}(U,f|_U)$ on all objects of the form 
$$\begin{tikzcd}
		\Big[\oo_U^{\oplus n}\arrow[shift left]{r}{0}&\oo_U^{\oplus m}\arrow[shift left]{l}{0} \ \Big].
\end{tikzcd}$$
We will define a collection of maps
\begin{align*}\begin{split}h^q:\Cech(\mathscr{U},\underline{C}_\bullet^{II}(\mathscr{C}))\to \Cech(\mathscr{U},\underline{C}_\bullet^{II}(\mathscr{F}))	.
\end{split}\end{align*}
Let
$\bar{\alpha}:=\alpha_0[\alpha_1|\cdots |\alpha_k]\in C_\bullet^{II}(\mathscr{C})(U_{i_0\cdots i_p})$ and define
\begin{align*}\begin{split}h^q(\bar{\alpha}):=\\
\sum(-1)^{\epsilon+pq+\binom{q}{2}}g_{i_0i_{-q}}(\alpha_0)_{i_{-q}}\Big[(\alpha_1)_{i_{-q}}\Big|\cdots\Big|(\alpha_{l_1})_{i_{-q}}\Big|g_{i_{1-q}i_{-q}}^{-1}\Big|(\alpha_{l_1+1})_{i_{1-q}}\Big|\cdots \\
\cdots\Big|(\alpha_{l_2})_{i_{1-q}}\Big|g_{i_{2-q}i_{1-q}}^{-1}\Big|(\alpha_{l_2+1})_{i_{2-q}}\Big|\cdots \Big|(\alpha_{l_q})_{i_{-1}}\Big|g_{i_{0}i_{-1}}^{-1}\Big|(\alpha_{l_q+1})_{i_{0}}\Big|\cdots\Big]	
\end{split}\end{align*}
where the sum is over all $q$-tuples $1\leq i_{-q}<i_{1-q}<i_{2-q}<...<i_{-1}<i_{0}$ and all $q$-tuples $0\leq l_1\leq...\leq l_q\leq k$. The sign is given by 
\begin{align*}\begin{split}\epsilon=\sum_{i=1}^q\Big(\sum_{j=0}^{l_i}|\alpha_j|+l_i\Big)
\end{split}\end{align*}

\begin{lemma}\label{lemmaonhq}
We have
\begin{align*}\begin{split}
	\bar{d}_2h^q+d_{Cech} h^{q-1}=h^{q-1}d_{Cech}+h^q\bar{d_2}.
\end{split}\end{align*}	
\end{lemma}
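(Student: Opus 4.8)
The plan is to verify the identity $\bar{d}_2 h^q + d_{\Cech} h^{q-1} = h^{q-1} d_{\Cech} + h^q \bar{d}_2$ by expanding both sides on a generator $\bar{\alpha} = \alpha_0[\alpha_1|\cdots|\alpha_k]$ and matching terms in groups, exactly in the spirit of the homotopy identity already proved for lax morphisms in Proposition \ref{proponlaxmorphisms} (part (i) of its internal lemma). The operator $h^q$ produces, over each $q$-tuple $i_{-q}<\cdots<i_0$ with $i_0$ the smallest index already present in $U_I$, a string in which $q$ change-of-basis matrices $g^{-1}_{\bullet\bullet}$ have been inserted among the $(\alpha_i)_{\bullet}$ at positions $l_1\le\cdots\le l_q$, with an outer factor $g_{i_0 i_{-q}}$. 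Applying $\bar{d}_2$ (the Hochschild cyclic-multiplication differential of the second kind) to such a string produces terms of several types: (a) multiplication of two adjacent $\alpha$-factors, (b) multiplication of an $\alpha$-factor with an adjacent $g^{-1}$, (c) multiplication of two adjacent $g^{-1}$'s, (d) multiplication involving the outer factor $g_{i_0 i_{-q}}$ and a cyclic neighbor. I would organize the expansion of $\bar{d}_2 h^q(\bar\alpha)$ into these families.

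The key cancellations are then: terms of type (a) match $h^q(\bar{d}_2\bar\alpha)$, since multiplying two $\alpha$'s and then inserting the $g^{-1}$'s is the same as inserting them and then multiplying (the position indices $l_i$ shift accordingly). Terms of type (b), where $(\alpha_{l_s})_{i_{s-1-q}} g^{-1}_{i_{s-q}i_{s-1-q}} = g^{-1}_{i_{s-q}i_{s-1-q}}(\alpha_{l_s})_{i_{s-q}}$ via the defining relation $g_{kl}(\alpha)_l g_{kl}^{-1} = (\alpha)_k$, are the ones that reconstruct, after collapsing, the terms of $h^{q-1}$ evaluated on a shorter Čech simplex — these match $d_{\Cech} h^{q-1}(\bar\alpha)$ together with the contributions of type (d) coming from the outer $g$. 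Terms of type (c), adjacent $g^{-1}g^{-1}$, combine with the corresponding $d_{\Cech}$-faces: because $g_{i_{s}i_{s-1}}^{-1} g_{i_{s-1}i_{s-2}}^{-1} = g_{i_{s}i_{s-2}}^{-1}$ is exactly (up to the cocycle relation for the $u_{ij}$'s, equivalently the transition cocycle identity for the trivializations) the effect of the Čech differential deleting the middle index, these match $h^{q-1} d_{\Cech}(\bar\alpha)$. The outer-factor terms (d) produce $g^{-1}_{i_0 i_{-1}} g_{i_0 i_{-q}} = g_{i_{-1} i_{-q}}$ on the one side and, via cyclic invariance of the Hochschild-II differential, the outer factor needed for the $(q-1)$-term with $i_{-1}$ as the new smallest index on the other; these cancel against the remaining pieces. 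I would also need to track that the signs $\epsilon + pq + \binom{q}{2}$ line up: the $\binom{q}{2}$ accounts for reordering the inserted $g^{-1}$'s, the $pq$ for Čech/internal degree interaction, and $\epsilon = \sum_i(\sum_{j\le l_i}|\alpha_j| + l_i)$ is precisely the Koszul sign of moving the degree-zero insertions past the $\alpha$'s; a careful but routine sign bookkeeping, identical in structure to the $\epsilon$ in (\ref{laxhomotopy}), shows all matched pairs cancel.

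The main obstacle I expect is the sign bookkeeping in the type-(b) and type-(d) families, where one both uses the relation $g_{kl}(\alpha)_l = (\alpha)_k g_{kl}$ to slide matrices past each other and simultaneously re-indexes the tuple $i_{-q}<\cdots<i_0$ when a face is taken; the shift in which index becomes "$i_0$" changes the outer factor and the range of the $l$-sums, and one must check the signs $\sigma$ implicit in the Čech ordering behave correctly. I would handle this the same way the paper handles the analogous claim for lax morphisms: fix the convention that $\bar{d}_2$ acts with the Hochschild-II signs from Section 2.2, expand on a low-length example ($k=1$ or $k=2$, $q=1,2$) to pin down every sign, and then assert the general pattern, since the combinatorics of inserting $g^{-1}$'s is formally identical to inserting $\alpha^{-1}$'s in the proof of Proposition \ref{proponlaxmorphisms}. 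I do not expect any genuinely new phenomenon beyond that already-verified calculation — indeed $h^q$ here is the "trivialization" analogue of the lax homotopy $h^q$ there — so the lemma reduces to transcribing that argument with $g^{-1}$ in place of $\alpha^{-1}$ and the cocycle relation for transition functions in place of the cocycle condition on the $\alpha_{UV}$'s.
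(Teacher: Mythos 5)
Your overall strategy --- expand both sides on a generator, sort the terms of $\bar{d}_2 h^q$ into families according to which adjacent pair gets multiplied, and cancel using the conjugation relation $g_{kl}(\alpha)_l g_{kl}^{-1}=(\alpha)_k$, the cocycle relation for the $g$'s, and the cyclic term of $\bar{d}_2$ --- is exactly the paper's, and your analogy with the lemma inside Proposition \ref{proponlaxmorphisms} is the right one. However, your assignment of which family cancels against which is off in two places, and as stated the scheme would not close up. The terms of your type (b), where an $\alpha$ is multiplied into an adjacent $g^{-1}$, do not ``collapse to a shorter \v{C}ech simplex'': such a product still contains all $q$ insertions, so it cannot be a term of $d_{Cech}h^{q-1}$. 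What actually happens is that the two sub-families of (b) --- $g^{-1}_{i_{j-q}i_{j-1-q}}(\alpha_{l_j+1})_{i_{j-q}}$ and $(\alpha_{l_j+1})_{i_{j-1-q}}g^{-1}_{i_{j-q}i_{j-1-q}}$ --- are equal by the very conjugation relation you quote and occur with opposite signs in the alternating sum of $\bar{d}_2$, so they cancel against each other internally. The terms that do match $d_{Cech}h^{q-1}$ are your type (c) (two adjacent $g^{-1}$'s merging via the cocycle relation, which deletes one inserted index) together with the boundary case $l_1=0$ in which $\alpha_0$ absorbs the first $g^{-1}$ into the outer factor; and $h^{q-1}d_{Cech}$ is matched by the remaining \v{C}ech faces of $d_{Cech}h^{q-1}$ (those inserting an index among $i_0,\dots,i_p$) together with the cyclic wrap-around term $g^{-1}_{i_0i_{-1}}g_{i_0i_{-q}}=g_{i_{-1}i_{-q}}$ from your type (d). With the pairing corrected, the computation closes up exactly as you describe, and your proposed sign bookkeeping via low-length examples is how one would pin down the exponents $\epsilon+pq+\binom{q}{2}$.
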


\begin{proof}
	We apply both sides to $\bar{\alpha}=\alpha_0[\alpha_1|\cdots |\alpha_k]$ which is a section over $U_{i_0...i_p}$. First, $\bar{d}_2h^q(\bar{\alpha})$ will consist of terms of the form
	\begin{align}
		\begin{split}\label{typ1}
			&\color{black}(-1)^{p+q+|\alpha_0|+\epsilon+pq+\binom{q}{2}}g_{i_0i_{1-q}}(\alpha_0)_{i_{1-q}}\Big[(\alpha_1)_{i_{1-q}}\Big|\cdots \Big],\\
			&\color{black}(-1)^{p+q+|\alpha_0|+...+|\alpha_{l_j}|+l_j+j+\epsilon+pq+\binom{q}{2}}g_{i_0i_{-q}}(\alpha_0)_{i_{-q}}\Big[\cdots\Big|g_{i_{j+1-q}i_{j-1-q}}^{-1}\Big|\cdots\Big],\\
			&\color{black}(-1)^{p+q+|\alpha_0|+...+|\alpha_{r}|+r+j+\epsilon+pq+\binom{q}{2}}g_{i_0i_{-q}}(\alpha_0)_{i_{-q}}\Big[\cdots \Big|(\alpha_{r}\alpha_{r+1})_{j-q}\Big|\cdots\Big],\\
			&\color{black}(-1)^{p+q+j+1+|\alpha_0|+|\alpha_1|+...+|\alpha_{l_{j}}|+l_j+\epsilon+pq+\binom{q}{2}}g_{i_0i_{-q}}(\alpha_0)_{i_{-q}}\Big[\cdots \Big|g^{-1}_{i_{j-q}i_{j-1-q}}(\alpha_{l_{j}+1})_{i_{j-q}}\Big|\cdots\Big]\\
			&\color{black}(-1)^{p+q+j+|\alpha_0|+...+|\alpha_{l_{j}+1}|+l_{j}+1+\epsilon+pq+\binom{q}{2}}g_{i_0i_{-q}}(\alpha_0)_{i_{-q}}\Big[\cdots \Big|(\alpha_{l_{j}+1})_{i_{j-1-q}}g^{-1}_{i_{j-q}i_{j-1-q}}\Big|\cdots\Big]\\
			&\color{black}(-1)^{p+q+1+|\alpha_0|+...+|\alpha_k|+k+q-1+\epsilon+pq+\binom{q}{2}}g_{i_{-1}i_{-q}}(\alpha_0)_{i_{-q}}\Big[\cdots \Big],\\
			&\color{black}(-1)^{p+q+1+(|\alpha_k|+1)(|\alpha_0|+...+|\alpha_{k-1}|+k-1+q)+\epsilon+pq+\binom{q}{2}}g_{i_0i_{-q}}(\alpha_k\alpha_0)_{i_{-q}}\Big[\cdots \Big]
		\end{split}
	\end{align}
	Next, $d_{Cech}h^{q-1}$ will consist of terms of the form
	\begin{align}
		\begin{split}\label{typ2}
			&\color{black}(-1)^{\epsilon+p(q-1)+\binom{q-1}{2}}g_{i_0i_{1-q}}(\alpha_0)_{i_{1-q}}\Big[(\alpha_1)_{i_{1-q}}\Big|\cdots\Big],\\
			&\color{black}(-1)^{\epsilon+p(q-1)+\binom{q-1}{2}+j}g_{i_0i_{-q}}(\alpha_0)_{i_{-q}}\Big[\cdots\Big|g_{i_{j+1-q}i_{j-1-q}}^{-1}\Big|\cdots\Big],\\
			&\color{black}(-1)^{\epsilon+p(q-1)+\binom{q-1}{2}+q-1+l+1}h^{q-1}(\bar{\alpha})|_{i_{-q}i_{1-q}...i_0...i_{l}j...i_p}
		\end{split}
	\end{align}	
	Also, $h^{q-1}d_{Cech}$ will consist of terms of the form
	\begin{align}
		\begin{split}\label{typ3}
			&\color{black}(-1)^{l+1+\epsilon+(p+1)(q-1)+\binom{q-1}{2}}h^{q-1}(\bar{\alpha}|_{i_0...i_lj...i_p})\\
			&\color{black}(-1)^{\epsilon+(p+1)(q-1)+\binom{q-1}{2}}g_{i_{-1}i_{-q}}(\alpha_0)_{i_{-q}}\Big[\cdots \Big].
		\end{split}
	\end{align}
Finally, $h^q\bar{d}_2$ will contain terms of the form
\begin{align}\begin{split}\label{typ4}
	&\color{black}(-1)^{p+|\alpha_0|+...+|\alpha_r|+r+\epsilon+pq+\binom{q}{2}}g_{i_0i_{-q}}(\alpha_0)_{i_{-q}}\Big[\cdots \Big|(\alpha_{r}\alpha_{r+1})_{j-q}\Big|\cdots\Big]\\
	&\color{black}(-1)^{p+1+(|\alpha_k|+1)(|\alpha_0|+|\alpha_1|+...+|\alpha_{k-1}|+k-1)+\epsilon+pq+\binom{q}{2}}g_{i_0i_{-q}}(\alpha_k\alpha_0)_{i_{-q}}\Big[\cdots \Big]
\end{split}\end{align}
Here the $\epsilon'$ in the signs are all different. The first term in (\ref{typ1}) will cancel with the first term in (\ref{typ2}). The second term in (\ref{typ1}) cancels with the second term in (\ref{typ2}). The third term in (\ref{typ1}) cancels with the first term in (\ref{typ4}). The fourth and fifth terms in (\ref{typ1}) cancel. The sixth term in (\ref{typ1}) cancels with the second term in (\ref{typ3}). The last term in (\ref{typ1}) cancels with the last term in (\ref{typ4}). Finally the last term in (\ref{typ2}) cancels with the first term in (\ref{typ3})

\end{proof}

Now we introduce the second ingredient to the map (\ref{goalisphi}). Suppose $R$ is a commutative ring and $P_0,...,P_m$ are a collection of $\Z/2$-graded free modules. Pick homogeneous bases $\{e_i^j\}$ for each $P_i$. Let $F^0\in \Hom(P_1,P_0)$, $F^1\in \Hom(P_2,P_1)$,..., $F^m\in \Hom(P_0,P_m)$ be endomorphisms which we can think of as matrices with the fixed choice of bases. We define
\begin{align*}
	sTr(F^0\otimes F^1\otimes...\otimes F^m):=\sum(-1)^{\sigma}F^0_{j_0j_1}\otimes F^1_{j_1j_2}\otimes...\otimes F^m_{j_mj_0}
\end{align*}
where the sum is over all $j_0,...,j_m$ and the sign is given by
\begin{align*}
	\sigma=(m+1)|e_0^{j_0}|+|e_1^{j_1}|+...+|e_m^{j_m}|.
\end{align*}
Extending this over infinite sums gives a map of presheaves $\underline{C}^{II}_\bullet(\mathscr{F})\to \underline{C}^{II}_\bullet(\oo_{-f})$ from which we obtain a map $\Cech(\mathscr{U},\underline{C}^{II}_\bullet(\mathscr{F}))\to \Cech(\mathscr{U},\underline{C}^{II}_\bullet(\oo_f))$.

We have to introduce one more bit of notation before defining the map $\phi$. For $\bar{\alpha}:=\alpha_0[\alpha_1|\cdots |\alpha_k]\in C^{II}_\bullet(\mathscr{C}(V))$ let
\begin{align*}
	Sh(\underset{n factors}{\underbrace{[\delta|\cdots|\delta]}},\bar{\alpha})=\sum_{i_0+...+i_k=n} \alpha_0[\delta^{i_0}|\alpha_1|\delta^{i_1}| \alpha_2|\cdots|\alpha_k|\delta^{i_k} ].
\end{align*}
For each $n\geq 0$, this defines a map of presheaves $\underline{C}^{II}_\bullet(\mathscr{C})\to \underline{C}^{II}_\bullet(\mathscr{C})$ and from it we obtain a map $\Cech(\mathscr{U},\underline{C}^{II}_\bullet(\mathscr{C}))\to \Cech(\mathscr{U,}\underline{C}^{II}_\bullet(\mathscr{C}))$.
\begin{deff}
\label{defphi}
We define 
\begin{align*}\begin{split}\phi:\Cech(\mathscr{U},\underline{C}^{II}_\bullet(\mathscr{C}))\to\Cech(\mathscr{U},\underline{C}^{II}_\bullet(\oo_{-f}))
\end{split}\end{align*}
by
\begin{align*}\begin{split}
	\phi=\sum_{n,q\geq 0}(-1)^nsTr\Big(h^q\big(Sh(\underset{n \ factors}{\underbrace{[\delta|\cdots|\delta]}},-)\big)\Big)
\end{split}\end{align*}
\end{deff}

\begin{theorem}\label{theoremonphi}
(i) \ $\phi$ is a chain map. \\
(ii) \ If $(P_i,\delta_i)=(\oo_X,0)$ for some $i$ then $\phi$ is a quasi inverse to the quasi isomorphism (\ref{yon}).
\end{theorem}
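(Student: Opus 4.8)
The plan is to prove (i) by factoring $\phi$ as a composite of three chain maps, and (ii) by computing $\phi\circ(\ref{yon})$ explicitly on the image of the Yoneda map and recognizing it as chain homotopic to the identity.

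\textbf{Part (i).} Write $\phi=sTr\circ H\circ S$ with $S=\sum_n(-1)^nSh_n$ and $H=\sum_qh^q$, and split the differential on every $\Cech$--Hochschild-of-the-second-kind complex involved into $d_{Cech}+\bar d_0+\bar d_1+\bar d_2$ (Cech, curvature insertion, internal-differential insertion, cyclic/composition). Let $\delta=\bigoplus_i\delta_{P_i}$ be the global odd endomorphism and let $\mathscr{C}^{\delta=0}$ denote the presheaf of cdg categories with the same underlying graded categories as $\mathscr{C}$ but with zero internal differential and curvature $-f\cdot\id$ (well defined since $\delta$ restricts compatibly, being a functor of graded-enriched categories $\mathscr{C}\to\mathscr{C}^{\delta=0}$, not a cdg functor). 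One then checks three things. First, $S$ is a chain map $\underline C^{II}_\bullet(\mathscr{C})\to\underline C^{II}_\bullet(\mathscr{C}^{\delta=0})$: this is the gauge/perturbation statement that ``removing the odd endomorphism $\delta$'' is realized on Hochschild chains of the second kind by $\sum_n(-1)^n$ times the insertion of $n$ copies of $\delta$, the curvature change $\delta^2-f\rightsquigarrow -f$ coming from pairs of inserted $\delta$'s collapsing under $\bar d_2$; it is verified insertion by insertion exactly as in the $A_\infty$-gauge-equivalence formalism. Second, $H$ is a chain map $\underline C^{II}_\bullet(\mathscr{C}^{\delta=0})\to\underline C^{II}_\bullet(\mathscr{F})$ compatible with the Cech differentials: compatibility with $d_{Cech}+\bar d_2$ is Lemma \ref{lemmaonhq} summed over $q$, compatibility with $\bar d_0$ is the observation that the central curvature $-f\cdot\id$ is unchanged by rewriting in a local basis and conjugating by a change-of-basis matrix, and $\bar d_1$ vanishes on both sides. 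Third, $sTr$ is a chain map $\underline C^{II}_\bullet(\mathscr{F})\to\underline C^{II}_\bullet(\oo_{-f})$: it is defined summand-wise (hence commutes with $d_{Cech}$), kills $\bar d_1$ on both sides, sends ``insert $-f\cdot I$'' to ``insert $-f$'' (hence respects $\bar d_0$), and for $\bar d_2$ is the graded cyclicity of the supertrace. Composing these gives (i). The signs $\sigma$ (in $sTr$) and $\epsilon$ (in $h^q$) make the bookkeeping laborious, but the only conceptual input is the first step.

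\textbf{Part (ii).} Granting that $(\ref{yon})$ is a quasi-isomorphism --- over each finite intersection $U_I$ the Yoneda functor exhibits $\oo_{-f}(U_I)$ as pseudo-equivalent to the cdg category generated by the $P_j|_{U_I}$ (Propositions \ref{proponpseudoequivalences0}, \ref{proponpseudoequivalences}, with $(\oo_{U_I},0)$ a pseudo-generator whose endomorphism cdg algebra in $\text{qvect}(U_I,f|_{U_I})$ is $(\oo_{U_I},0,-f)$), and the Cech direction being a finite filtration --- it suffices to exhibit a chain homotopy $\phi\circ(\ref{yon})\simeq\id$ on $\Cech(\mathscr{U},\underline C^{II}_\bullet(\oo_{-f}))$: together with $(\ref{yon})$ being invertible in $D_{\Z/2}(\C)$ this forces $\phi$ to be its inverse, whence also $(\ref{yon})\circ\phi$ equals $\id$ in $D_{\Z/2}(\C)$. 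Now on the image of $(\ref{yon})$ every object occurring is $(\oo_X,0)$, whose differential vanishes, so all $Sh_n$ with $n\ge1$ die and only the $n=0$ part of $\phi$ survives; moreover the trivialization of $(\oo_X,0)$ over each $U_j$ is the rank-one free module, so every change-of-basis matrix and every supertrace appearing is the scalar $1$. Thus $\phi\circ(\ref{yon})$ sends $a_0[a_1|\cdots|a_k]$ over $U_I$ to itself (the $q=0$ term) plus, for each $q\ge1$, a sum over refinements $U_{I\cup J}$ (with $|J|=q$) of chains obtained by inserting $q$ copies of the unit at chosen slots; this is precisely the $\Cech$-comparison map attached to a lax endomorphism of the presheaf of cdg algebras $\oo_{-f}$ lying over $\id$. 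Since $\oo_{-f}$ has one object per open set, the cdg-algebra analogue of the discussion following Lemma \ref{comparinglaxandstrict} identifies this lax morphism with $(\id,\id)$, and Lemma \ref{comparinglaxandstrict} then gives $\phi\circ(\ref{yon})\simeq\Cech(\id)=\id$; alternatively one writes the homotopy directly from the standard bar homotopy inserting units, spread over the covering.

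\textbf{Expected main obstacle.} The hard part is the first step of (i): verifying that $\sum_n(-1)^nSh_n$ genuinely converts the curved differential with internal part $[\delta,-]$ and curvature $\delta^2-f$ into the one with no internal part and curvature $-f$. All the nontrivial combinatorics and sign-chasing lives there; $H$, $sTr$ and the whole of (ii) are routine once it is in hand, the only mild subtlety in (ii) being the identification of the ``insert-units-over-refinements'' operator with the comparison map of an honest lax endomorphism, which is forced by its shape.
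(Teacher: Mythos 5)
Your part (i) is sound and is essentially the paper's own computation reorganized: the paper does not introduce the intermediate presheaf $\mathscr{C}^{\delta=0}$ explicitly, but its verification of $\sum(-1)^n h^{q,n}(d_{\Cech}+\bar d_0+\bar d_1+\bar d_2)=(d_{\Cech}+\bar d_0+\bar d_2)\sum(-1)^n h^{q,n}$ proceeds by exactly the two inputs you isolate, namely Lemma \ref{lemmaonhq} for the $h^q$-part and the identity expressing $\bar d_2\circ Sh(\delta^{n+1},-)$ in terms of $Sh(\delta^{n+1},-)\circ\bar d_2$, $Sh(\delta^n,-)\circ\bar d_1$ and $Sh(\delta^{n-1},Sh(\delta^2,-))$, which is precisely your statement that $S$ is a chain map into the untwisted complex. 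So (i) is fine.

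In part (ii) there is a concrete gap: the identification of $\phi\circ(\ref{yon})-\id$ with the comparison map $\Cech(\id,\id)-\Cech(\id)$ of a lax endomorphism, and hence the appeal to Lemma \ref{comparinglaxandstrict}, does not go through. The operator $h^q$ of Definition \ref{defphi} sums over \emph{increasing} $q$-tuples $i_{-q}<\cdots<i_{-1}<i_0=\min I$, whereas the $h^q$ of the lax-morphism formalism in (\ref{laxhomotopy}) sums over \emph{all ordered} $q$-tuples $J$ disjoint from $I$; already for $q=1$ the index sets differ ($j<i_0$ versus $j\notin I$), and the vanishing of the $q\geq 2$ terms in Lemma \ref{comparinglaxandstrict} rests on a pairwise sign cancellation over reorderings of $J$ that has no counterpart here. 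So the correction terms are not ``the comparison map of an honest lax endomorphism,'' and a homotopy would have to be built separately. Your computation of $\phi\circ(\ref{yon})$ itself --- only $n=0$ survives, all matrices and supertraces are scalars, the $q\geq1$ terms insert units over refinements --- is correct and is exactly what the paper uses; but the paper closes the argument differently and more cheaply: it composes with $HKR_{(X,-f)}$ from Proposition \ref{hkrxf}, observes that every term containing an inserted unit is killed (since $d1=0$), so that $HKR_{(X,-f)}\circ\phi\circ(\ref{yon})=HKR_{(X,-f)}$ on the nose, and concludes from the fact that $HKR_{(X,-f)}$ and (\ref{yon}) are quasi-isomorphisms. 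You should replace the invocation of Lemma \ref{comparinglaxandstrict} by this argument (or by an explicitly constructed unit-insertion homotopy adapted to the increasing-chain index set).
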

\begin{proof}
First note that $\bar{d_0}+\bar{d_2}+d_{Cech}$ commutes with $sTr$. 
It suffices to check that
$$\sum (-1)^nh^{q,n}(d_{Cech}+\bar{d_0}+\bar{d_1}+\bar{d_2})=(d_{Cech}+\bar{d_0}+\bar{d_2})\sum (-1)^nh^{q,n}.$$
Comparing Cech-Hochchild bidegree this equality comes down to 
\begin{align*}
\begin{cases}
(1) \ \ \ (-1)^{n}h^{q,n}\circ \bar{d_1}+\\
(2) \ \ \ (-1)^{n+1}h^{q-1,n+1}\circ d_{Cech}+\\
(3) \ \ \ (-1)^{n-1}h^{q,n-1}\circ \bar{d_0}+\\
(4) \ \ \ (-1)^{n+1}h^{q,n+1}\circ \bar{d_2}
\end{cases}=\begin{cases}
(A) \ \ \ (-1)^{n+1}d_{Cech}\circ h^{q-1,n+1}+\\
(B) \ \ \ (-1)^{n-1}\bar{d_0}\circ h^{q,n-1}+\\
(C) \ \ \ (-1)^{n+1}\bar{d_2}\circ h^{q,n+1}	
\end{cases}
\end{align*}
Then using lemma \ref{lemmaonhq}
\begin{align*}
(A)+(C)=&(-1)^{n+1}d_{Cech}h^{q-1,n+1}(\bar{\alpha})+(-1)^{n+1}\bar{d_2}h^{q,n+1}(\bar{\alpha})	\\
=&(-1)^{n+1}d_{Cech}h^{q-1,0}(Sh(\delta^{n+1},\bar{\alpha}))+(-1)^{n+1}\bar{d_2}h^{q,0}(Sh(\delta^{n+1},\bar{\alpha}))\\
=&(-1)^{n+1}h^{q-1,0}d_{Cech}(Sh(\delta^{n+1},\bar{\alpha}))+(-1)^{n+1}h^{q,0}\bar{d_2}Sh(\delta^{n+1},\bar{\alpha})\\
=&(-1)^{n+1}h^{q-1,n+1}d_{Cech}(\bar{\alpha})+(-1)^nh^{q,n}\bar{d_1}\bar{\alpha}\\
+&(-1)^{n+1+p}h^{q,n-1}Sh(\delta^2,\bar{\alpha})+(-1)^{n+1}h^{q,n+1}\bar{d_2}(\bar{\alpha})\\
=&(2)+(1)+(4)+(3)-(B).
\end{align*}

One can compute that the composite $\underline{\Cech}(\mathscr{U},\underline{C}_\bullet^{II}(\oo_{-f}))\to \underline{\Cech}(\mathscr{U},\underline{C}^{II}_\bullet(\mathscr{C}))\to \underline{\Cech}(\mathscr{U},\underline{C}_\bullet^{II}(\oo_{-f}))\to \underline{\Cech}(\mathscr{U},(\Omega_X^{\bullet},-df\wedge-))$ equals the map $HKR_{(X,-f)}$ from proposition \ref{hkrxf}. From this the second part follows.

\end{proof}

\subsection{Main theorem}\label{maintheorem}
Recall that $Y\subset X$ is locally cut out by $x_i\in \oo_X(U_i)$ and that $u_{ij}x_i=x_j$ over $U_{ij}$. Recall also the matrix factorization $P$ from section \ref{P}.
Over each of the open sets $U_i$ we can identify $P=\oo_X\epsilon_i\oplus \oo_X$. Suppose $F_i$ is a matrix describing the action of an element of $\End(P|_{U_i})$ in this basis. If $F_j$ denotes the matrix of the same endomorphism but with respect to the basis $P=\oo_X\epsilon_j\oplus \oo_X$ over $U_{ji}$ then $g_{ij}F_jg_{ij}^{-1}=F_i$ where the change of basis matrix is 
\begin{align*}
g_{ij}=
\begin{bmatrix}
1&0\\0&u_{ij}
\end{bmatrix}.
\end{align*}

\begin{lemma}\label{diagram1}
The following diagram commutes in $D_{Z/2}(\C)$
$$\begin{tikzcd}
\Cech(\mathscr{U},\underline{C}_\bullet(\cala))\arrow{r}{can}\arrow{dd}{HKR_\cala}&\Cech(\mathscr{U},\underline{C}_\bullet(\EE nd(P^\bullet)))\arrow{r}{\phi}&\Cech(\mathscr{U},\underline{C}_\bullet^{II}(\oo_{-f}))\arrow{dd}{HKR_{\oo_{-f}}}	\\
&&\\
\Cech(\mathscr{U},\Omega_\cala^\bullet)\arrow{r}{\bar{\wedge} -Td(\oo_Y(-Y))^{-1}}&\Cech(\mathscr{U},\Omega_\cala^\bullet)\arrow{r}{\delta}&\Cech(\mathscr{U},\Omega_X^\bullet,-df\wedge)
\end{tikzcd}$$

\end{lemma}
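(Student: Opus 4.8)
\emph{Proof plan.} The plan is to prove the sharper statement that the two composites
$$HKR_{\oo_{-f}}\circ\phi\circ can\qquad\text{and}\qquad\delta\circ\big((-)\bar{\wedge}Td(-Y)^{-1}\big)\circ HKR_\cala$$
agree up to a chain homotopy. All six arrows are honest chain maps of $\Z/2$-graded complexes: $\phi$ by Theorem~\ref{theoremonphi}(i) — note that $\cala$, $\EE nd(P)$ and $\oo_{-f}$ are sheaves of dg algebras, so their Hochschild complexes of the first and second kind coincide and Definition~\ref{defphi} applies to the one-object category $\langle P\rangle$; $can$ because it is induced by the homomorphism of sheaves of dg algebras $\cala\to\EE nd(P)$ coming from left multiplication on $P\cong\cala$; $HKR_\cala$ and $HKR_{\oo_{-f}}$ by Propositions~\ref{proponhkra} and~\ref{hkrxf}; multiplication by $Td(-Y)^{-1}$ by Proposition~\ref{propontodd}; and $\delta$ because, writing $\Omega_\cala^\bullet=\text{Cone}(L)$ as in (\ref{qis2}), it is the (strict) projection of the cone onto its $(\Omega_X^\bullet,-df\wedge(-))$-summand, whose class in $D_{\Z/2}(\C)$ is the connecting morphism of (\ref{SES}). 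So it suffices to exhibit the homotopy.

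First I would make $\phi\circ can$ completely explicit on a Hochschild chain $\bar a=a_0[a_1|\cdots|a_k]\in C_k(\cala(U_I))$, $I=\{i_0<\cdots<i_p\}$, splitting into the case where all $a_j$ lie in $\cala^0=\oo_X$ and the case where exactly one factor carries the local generator $\epsilon_{i_0}$ of $\cala^{-1}$ (chains with two or more $\epsilon$-factors are killed by $HKR_\cala$, and one checks they also die along the top). In the first case $can(a_j)$ is the central scalar $a_j\cdot\id_P$, so on unwinding $\phi=\sum_{n,q\ge0}(-1)^n sTr\big(h^q(Sh([\delta|\cdots|\delta],-))\big)$ the $a_j$'s factor out as $a_0a_1\cdots a_k$ and one is left with the supertrace of a word built from: the $n$ inserted copies of $\delta_P|_{U_j}$ (the off-diagonal matrix with entries $x_j,g_j$, satisfying $\delta_P^2=f\cdot\id$, so that only the parity of each insertion block matters), and the $q$ change-of-basis matrices $g_{\bullet\bullet}^{-1}=\mathrm{diag}(1,u_{\bullet\bullet}^{-1})$ produced by $h^q$. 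In the $\epsilon$-case the same reduction applies with one factor replaced by the nilpotent $can(\epsilon_{i_0})$, and one uses $\delta_P\,can(\epsilon_{i_0})=\mathrm{diag}(x_{i_0},0)$ and $can(\epsilon_{i_0})\,\delta_P=\mathrm{diag}(0,x_{i_0})$.

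Next I would push the result through $HKR_{\oo_{-f}}$, that is $c_0[c_1|\cdots|c_m]\mapsto\frac1{m!}c_0\,dc_1\wedge\cdots\wedge dc_m$, and match it term by term with $\delta$ applied to $HKR_\cala(\bar a)\,\bar{\wedge}\sum_{q\ge0}\frac{c_1(-Y)^{\bar{\wedge}q}}{(q+1)!}$. In a word in the $\delta_P$'s and the diagonal $g^{-1}$'s the supertrace forces the matrix index to be constant; the constant branch corresponding to the degree-$0$ line produces only entries equal to $1$, hence dies under $HKR_{\oo_{-f}}$ as soon as $q\ge1$, while the branch on the degree-$1$ line converts the $q$ matrices $\mathrm{diag}(1,u_{\bullet\bullet}^{-1})$ together with the prefactor $u_{i_0i_{-q}}=\prod u_{i_si_{s-1}}$ coming from the leading $g_{i_0i_{-q}}$ into a product of logarithmic derivatives, i.e. into $c_1(-Y)^{\bar{\wedge}q}$; the shuffle sum over $0\le l_1\le\cdots\le l_q\le k$ in $h^q$, together with the normalizations $\frac1{(k+q+n)!}$ and $\frac1{k!}$, produces exactly the coefficient $\frac1{(q+1)!}$ of the $q$-th term of $Td(-Y)^{-1}$. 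The $\delta_P$-insertions from $Sh$ that survive the (cyclic) supertrace, necessarily in even number, give precisely the term $a_0\,dx_{i_0}\wedge dg_{i_0}\wedge da_1\wedge\cdots\wedge da_k$ of $HKR_\cala$, while the $\epsilon$-chains produce its $\frac{dx_{i_0}}{x_{i_0}}$-term; the latter, with its Todd twist, lies in the $\Omega_X^\bullet(\log Y)$-summand of $\Omega_\cala^\bullet$ and is killed by $\delta$, which is mirrored on the top by the vanishing of the corresponding part of the supertrace, whereas the $\Omega_X^\bullet$-valued pieces (the $dx_{i_0}\wedge dg_{i_0}$-term and the Cech-degree-one $\frac{du_{i_0j}}{u_{i_0j}}$-corrections) survive on both sides.

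The main obstacle is bookkeeping. The delicate points are: (i) matching all signs — the $\sigma(I,J)$, the various $\epsilon$'s in $h^q$ and in $sTr$, the $(-1)^n$ from expanding $Sh$, and the $pq$ and $\binom{q}{2}$ twists — against the signs hidden in the module structure (\ref{modulestructureonomegal}) and in the cone differential; (ii) verifying that the combinatorial coefficient coming from the shuffles in $Sh$ and $h^q$, after the $\frac1{k!}$ and $\frac1{m!}$ factors, is exactly the Todd coefficient $\frac1{(q+1)!}$; and (iii) identifying cleanly which produced form sits in which summand of $\Omega_\cala^\bullet$, so that $\delta$ behaves as claimed. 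I expect no conceptual difficulty beyond what already went into constructing $\phi$, $HKR_\cala$ and the cone $\Omega_\cala^\bullet$; any residual discrepancy between the two strict composites is to be absorbed by a homotopy of the shuffle type appearing in Proposition~\ref{proponlaxmorphisms} and Lemma~\ref{comparinglaxandstrict}.
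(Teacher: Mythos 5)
Your overall strategy is the same as the paper's: compute both composites term by term, split Hochschild chains according to the number of factors from $\cala^{-1}$, use the supertrace parity to restrict which values of $n$ (the number of $\delta_P$-insertions from $Sh$) contribute, use $dx_{i_0}\wedge dx_{i_0}=0$ to kill the higher $n$, and identify the diagonal change-of-basis matrices produced by $h^q$ with powers of $c_1(-Y)$. However, your matching scheme contains a concrete error in the case of chains with exactly one $\epsilon$-factor, and as written the two sides would not match. You assert that the $\epsilon$-chains produce the logarithmic term $\tfrac{dx_{i_0}}{x_{i_0}}$ of $HKR_\cala$, that this lies in the $\Omega_X^\bullet(\log Y)$-summand and is killed by $\delta$, and that this is ``mirrored on the top by the vanishing of the corresponding part of the supertrace.'' All three claims are backwards. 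By the definition of $HKR_\cala$, a chain $b_0[\cdots|b_l\epsilon_{i_0}|\cdots|b_k]$ is sent to $\tfrac{(-1)^l}{k!}b_0\,dx_{i_0}\wedge db_1\wedge\cdots\wedge db_k$, which lies in the $\Omega_X^\bullet$-summand of the cone (this is visible from the $L(\cdots)$ term in the verification that $HKR_\cala$ is a chain map), hence survives $\delta$ and, after the Todd twist, must be matched against a nonzero contribution on the top. That contribution exists: it is exactly the $n=1$, $q\geq 0$ part of $\phi$, whose supertrace is nonzero by your own computation $\delta_P\,can(\epsilon_{i_0})=\mathrm{diag}(x_{i_0},0)$ (the parity constraint for the $\epsilon$-case forces $n$ odd, not even, and $HKR_{\oo_{-f}}$ kills $n\geq 3$). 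The logarithmic term $\tfrac{dx_{i_0}}{x_{i_0}}$ of $HKR_\cala$ is instead produced by the internal-degree-zero chains; it is indeed annihilated by $\delta$, and that is consistent precisely because the degree-zero chains contribute to the top only through $n=0$ (the $c_1$-products) and $n=2$ (the $dx_{i_0}\wedge dg_{i_0}$-term), with nothing left over to match a log form.

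A secondary point of bookkeeping that your sketch gets wrong in a way you should be aware of before attempting the sign and coefficient check: the coefficient produced by the shuffles is not uniformly $\tfrac{1}{(q+1)!}$. For the $n=0$, $q\geq 1$ terms it is $\tfrac{1}{k!\,q!}$, and these do not match the $q$-th Todd coefficient applied to the full $HKR_\cala$; rather they match $c_1(-Y)^{\bar{\wedge}(q-1)}/q!$ wedged against the Cech-degree-one piece $-\sum_{j<i_0}\tfrac{(-1)^p}{k!}a_0\tfrac{du_{i_0j}}{u_{i_0j}}\wedge da_1\wedge\cdots\wedge da_k$ of $HKR_\cala$, i.e.\ one of the $q$ factors of $c_1(-Y)$ is supplied by $HKR_\cala$ itself rather than by $Td(-Y)^{-1}$. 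Only the $n=1$ and $n=2$ families carry the coefficient $\tfrac{1}{k!(q+1)!}$ and pair with the $q$-th term of the Todd class. With these two corrections your plan reduces to the computation carried out in the paper, which in fact shows the two composites agree on the nose, with no residual homotopy needed.
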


\begin{proof}
Start with an element $a_0[a_1|\cdots|a_k]$ of internal degree zero over $U_{I}$, $I=\{i_0<i_1<...<i_p\}$. Note that the only terms from $\phi$ which do not vanish under $HKR_{\oo_{-f}}$ are those where $n=0$ and $q\geq 1$ or $n=2$ and $q\geq 0$. These are mapped to
\begin{align*}
	\begin{split}
		&-\sum_{q\geq 1} \sum\frac{(-1)^{pq+\binom{q}{2}}}{k!q!}u_{i_0i_{-q}}a_0du_{i_{1-q}i_{-q}}^{-1}\wedge...\wedge du_{i_0i_{-1}}^{-1}\wedge da_1\wedge...\wedge da_k\\
		&-\sum_{q\geq 0} \sum\frac{(-1)^{pq+\binom{q}{2}}}{k!(q+1)!}u_{i_0i_{-q}}a_0du_{i_{1-q}i_{-q}}^{-1}\wedge...\wedge du_{i_0i_{-1}}^{-1}\wedge dx_{i_0}\wedge dg_{i_0}\wedge da_1\wedge...\wedge da_k
	\end{split}
\end{align*}
where the second sums are over all $\{(i_{-q},...,i_{-1})|i_{-q}<i_{1-q}<...<i_{-1}<i_0\}$. This is precisely what we get when going down first and then to the right twice.

Now let's start with an element of with precisely one factor from $\cala^{-1}$, $\bar{b}=b_0[b_1|\cdots|b_l\epsilon_{i_0}|\cdots |b_k]$ and first go to the right twice and then down. This time, the only terms in the definition of $\phi$ which don't vanish under $HKR_{\oo_{-f}}$ is $n=1$ and $q\geq 0$. We get

\begin{align*}
	\begin{split}
		\sum \frac{(-1)^{1+l+pq+\binom{q}{2}}}{k!(q+1)!}u_{i_0i_{-q}}b_0du_{i_{1-q}i_{-q}}^{-1}\wedge...\wedge du_{i_0i_{-1}}^{-1}\wedge dx_{i_0}\wedge db_1\wedge...\wedge db_k
	\end{split}
\end{align*}
which agrees with what we get going down and then to the right.

Finally if we start with an element with two or more factors from $\cala^{-1}$, then going either direction is zero. this is clear if we go down first because $HKR_\cala$ vanishes on such elements by definition. For the other direction note that the only terms in the definition of $\phi$ which are non-zero on such an element are the ones where $n$ equals the number of factors from $\cala^{-1}$ so $n\geq 2$. Each such term will contain two or more factors $x_i$ and will therefore vanish when we apply $HKR_{\oo_{-f}}$ because $dx_i\wedge dx_i=0$.
\end{proof}

\begin{lemma}\label{diagram2}
There is a commutative diagram in $D_{Z/2}(\C)$
$$\begin{tikzcd}
	C_\bullet(\text{perf}_{\Cech}(Y))\arrow{d}{\sim}&C_\bullet(\text{perf}_{\Cech}(\cala))\arrow{l}{\sim}\arrow{d}{\sim}\\
	\Cech(\mathscr{U},\Omega_Y^\bullet)&\Cech(\mathscr{U},\Omega_\cala^\bullet)\arrow{l}{\sim}
\end{tikzcd}$$
\end{lemma}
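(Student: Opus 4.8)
The plan is to realise the square of the lemma as the outer rectangle obtained by stacking the commutative square~(\ref{hkrdiagram}) of Proposition~\ref{proponhkra} on top of a new square that compares the functor $\text{Ind}$ with the ``Morita reduction to the structure sheaf''. Both vertical isomorphisms will be built from $HKR$-towers entirely parallel to the chain of quasi-isomorphisms that defines $C_\bullet(\text{perf}_{\Cech}(X))\simeq\Cech(\mathscr{U},\Omega_X^\bullet)$ in Section~\ref{thecasef0} and~(\ref{totalhkrf}).

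First I would factor the right vertical arrow. Exactly as in Section~\ref{thecasef0}, there is a chain of quasi-isomorphisms
$$C_\bullet(\text{perf}_{\Cech}(\cala))\longrightarrow\Cech(\mathscr{U},\underline{C}_\bullet(\underline{\text{perf}}_{\Cech}(\cala)))\longleftarrow\Cech(\mathscr{U},\underline{C}_\bullet(\underline{\text{perf}}(\cala)))\longleftarrow\Cech(\mathscr{U},\underline{C}_\bullet(\cala)),$$
where $\underline{\text{perf}}(\cala)$ and $\underline{\text{perf}}_{\Cech}(\cala)$ are the evident presheaves of dg categories with objects $\mathcal{V}\otimes\cala$ (defined as $\underline{\text{perf}}(X)$ and $\underline{\text{perf}}_{\Cech}(X)$ were, with $\cala$ in place of $\oo_X$), and the Yoneda sub-one-object presheaf on $\oo_X$ is $\cala$ itself. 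The first arrow is a quasi-isomorphism by the Mayer--Vietoris induction of Section~\ref{secthochofmat}, which applies verbatim since $\cala$ has coherent cohomology sheaves; the second because $\text{perf}(\cala|_{U_I})\to\text{perf}_{\Cech}(\cala|_{U_I})$ is a quasi-equivalence over each affine $U_I$; the third because $\cala|_{U_I}$ is a dg-algebra generator of $\text{perf}(\cala|_{U_I})$, so that Hochschild homology can be computed on the one-object subcategory over each $U_I$. Composing with $HKR_\cala$ (a quasi-isomorphism by Proposition~\ref{proponhkra}) gives the right vertical isomorphism of the lemma; the left vertical isomorphism is the analogous composite for $\oo_Y$, that is, the one of Section~\ref{thecasef0} applied to $Y$.

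Next I would build a morphism from the $\cala$-tower (together with $HKR_\cala$) to the $\oo_Y$-tower (together with $HKR_Y$): on the leftmost term it is $\text{Ind}_*$ of Lemma~\ref{indfunctor}; on the two presheaf-Hochschild terms it is $\Cech(\text{Ind},\alpha)$, which is legitimate because $(\text{Ind},\alpha)$ is a lax morphism of presheaves of dg categories by Proposition~\ref{laxind}, so that Propositions~\ref{proponlaxmorphisms} and~\ref{comparinglaxandstrict} apply; on $\Cech(\mathscr{U},\underline{C}_\bullet(\cala))$ it is the map induced by the dg-algebra quasi-isomorphism $\cala\to\oo_Y$ of Section~\ref{P}; and on $\Cech(\mathscr{U},\Omega_\cala^\bullet)$ it is the map induced by~(\ref{qis2}). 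The square over $HKR$ is then literally the diagram~(\ref{hkrdiagram}). The squares over the first two arrows of the chain commute by naturality of the comparison maps relating $\underline{\text{perf}}_{\Cech}$-, $\underline{\text{perf}}$- and Yoneda-Hochschild complexes, since $\text{Ind}$ restricts to a lax morphism on the one-object subpresheaves. This leaves exactly one square to check: the one asserting that the reduction $\Cech(\mathscr{U},\underline{C}_\bullet(\cala))\to\Cech(\mathscr{U},\underline{C}_\bullet(\underline{\text{perf}}(\cala)))$ intertwines the map induced by $\cala\to\oo_Y$ with $\Cech(\text{Ind},\alpha)$.

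I expect this last square to be the main obstacle. It amounts to the two facts that $\text{Ind}$ carries the Yoneda object $\oo_X\in\text{perf}_{\Cech}(\cala)$ to the Yoneda object $i_*\oo_Y=\oo_Y\in\text{perf}_{\Cech}(Y)$, and that it induces on endomorphism complexes precisely the \v{C}ech-ification of the canonical quasi-isomorphism $\cala\to\oo_Y$. Both follow by unwinding the defining morphism $-\otimes_\cala i_*\oo_Y$ of $\text{Ind}$ on hom sheaves under the identifications $\underline{\Hom}^\bullet_\cala(\cala,\cala)=\cala$, $\cala\otimes_\cala i_*\oo_Y=i_*\oo_Y$ and $\underline{\Hom}^\bullet_{\oo_Y}(i_*\oo_Y,i_*\oo_Y)=\oo_Y$: the induced map sends a local section $a$ of $\cala$ to multiplication by $a$ on $i_*\oo_Y$, i.e. to the image of $a$ under $\cala=[\oo_X(-Y)\xrightarrow{x}\oo_X]\to\oo_Y$. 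The delicate part is keeping track of the lax structure maps $\alpha$ and of the \v{C}ech grading on the hom complexes while making this identification; once it is in place, every square commutes in $D_{\Z/2}(\C)$, and since every map in both towers other than $\text{Ind}_*$ and the bottom map~(\ref{qis2}) is an honest quasi-isomorphism, commutativity of the outer rectangle follows by composing. This is the companion, on the $\text{perf}_{\Cech}(Y)$ side, of the computation carried out in Lemma~\ref{diagram1} on the $\text{vect}_{\Cech}(X,f)$ side.
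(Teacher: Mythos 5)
Your proposal is correct and follows essentially the same route as the paper: decompose the square into a lax-morphism comparison square handled by Proposition \ref{proponlaxmorphisms}, a Yoneda-compatibility square handled by Lemma \ref{comparinglaxandstrict}, and the $HKR$ square which is diagram (\ref{hkrdiagram}) from Proposition \ref{proponhkra}, with Lemma \ref{indfunctor} supplying the top quasi-isomorphism. The only cosmetic difference is that you verify the right-hand vertical tower consists of quasi-isomorphisms directly (via the Mayer--Vietoris induction), whereas the paper deduces this from commutativity together with the left-hand verticals and the horizontal arrows being quasi-isomorphisms.
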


\begin{proof}
	We will divide the diagram into smaller commutative pieces where each arrow is a chain map
	
$$\begin{tikzcd}
				C_\bullet(\text{\text{perf}}_{\Cech}(Y))\arrow{d}{}&C_\bullet(\text{\text{perf}}_{\Cech}(\cala))\arrow{d}{}\arrow{l}{}\arrow[phantom, shift left, shift left, shift left, shift left, shift left, shift left, shift left, shift left]{l}{(A)}\\
		\Cech(\mathscr{U},\underline{C}_\bullet(\underline{\text{\text{perf}}}_{\Cech}(Y)))&\Cech(\mathscr{U},\underline{C}_\bullet(\underline{\text{\text{perf}}}_{\Cech}(\cala)))\arrow{l}{}\arrow{l}{}\arrow[phantom, shift left, shift left, shift left, shift left, shift left, shift left, shift left, shift left]{l}{(B)}\\
		\Cech(\mathscr{U},\underline{C}_\bullet(\oo_Y))\arrow{u}{}\arrow{d}{}&\Cech(\mathscr{U},\underline{C}_\bullet(\cala))\arrow{u}{}\arrow{l}{}\arrow[phantom, shift left, shift left, shift left, shift left, shift left, shift left, shift left, shift left, shift left]{l}{(C)}\arrow{d}{}\\
		\Cech(\mathscr{U},\Omega_Y^\bullet)&\Cech(\mathscr{U},\Omega_\cala^\bullet)\arrow{l}{}\arrow{l}{}\\
	\end{tikzcd}$$

Note first that all the vertical arrows on the left are quasi isomorphisms by the discussion in section \ref{thecasef0}. We will see below that the horizontal arrows in the squares $(A)-(C)$ are quasi isomorphisms and then it follows from commutativity (which is also dealt with below) that the vertical arrows on the right too are quasi isomorphisms.

In the square labeled $(A)$ the top arrow is a quasi isomorphism by lemma \ref{indfunctor}. The bottom arrow in this square is $\Cech(\text{Ind},\alpha)$ where $(\text{Ind},\alpha)$ is the lax morphism of presheaves of dg categories from proposition \ref{laxind}. Commutativity of $(A)$ is precisely proposition \ref{proponlaxmorphisms}. The bottom arrow in this square can be shown to be a quasi isomorphism by filtering both complexes by Cech degree and using the mapping theorem (\cite{Macl},theorem XI.3.4).

To see that the square $(B)$ commutes, we first note that we can assume that in the diagram below, we have $(\text{Ind},\alpha)\circ Yoneda=(Yoneda\circ q,1)$ as lax morphisms of presheaves of dg categories
$$\begin{tikzcd}\underline{\text{perf}}_{\Cech}(Y)&\underline{\text{perf}}_{\Cech}(\cala)\arrow{l}{\varphi}\\
\oo_Y\arrow{u}{Yoneda}&\cala\arrow{u}{Yoneda}\arrow{l}{q}.
\end{tikzcd}$$
Commutativity then follows from lemma \ref{comparinglaxandstrict}.

Commutativity of the square $(C)$ is part of proposition \ref{proponhkra}.

\end{proof}

\begin{lemma}\label{upperdiagram}
There is a commutative diagram in $D_{Z/2}(\C)$

\begin{tikzcd}
	C_\bullet(\text{\text{perf}}_{\Cech}(\cala))\arrow{d}{\sim}\arrow{rr}{}&&C_\bullet(\text{vect}_{\Cech}(X,f))\arrow{d}{\sim}\\
		\Cech(\mathscr{U},\underline{C}_\bullet(\cala))\arrow{r}{can}&\Cech(\mathscr{U},\underline{C}_\bullet(\underline{\End}(P)))\arrow{r}{\phi}&\Cech(\mathscr{U},\underline{C}_\bullet^{II}(\oo_{-f}))
\end{tikzcd}
\end{lemma}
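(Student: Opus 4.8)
The top horizontal arrow is $(-\otimes_\cala P)_*$ for the dg functor $-\otimes_\cala P\colon \text{perf}_{\Cech}(\cala)\to\text{vect}_{\Cech}(X,f)$ of (\ref{tensorwp}), i.e.\ the model for $\underline i_*$ furnished by Lemmas~\ref{resolutionforoy} and~\ref{indfunctor}; the left vertical $\sim$ is the zig-zag $C_\bullet(\text{perf}_{\Cech}(\cala))\to\Cech(\mathscr{U},\underline{C}_\bullet(\underline{\text{perf}}_{\Cech}(\cala)))\xleftarrow{\mathrm{Yon}}\Cech(\mathscr{U},\underline{C}_\bullet(\cala))$ used in squares $(A),(B)$ of the proof of Lemma~\ref{diagram2}; and the right vertical $\sim$ is the composite $(5)^{-1}(4)(3)(2)^{-1}(1)$ of the quasi-isomorphisms of (\ref{totalhkrf}). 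The plan is to realise the whole rectangle as a vertical stack of commutative squares whose vertical maps are all induced by (Cech and lax versions of) $-\otimes_\cala P$, and then to identify the resulting bottom edge with $\phi\circ can$.

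The first square has $(-\otimes_\cala P)_*$ over the map $\Cech(-\otimes_\cala P,\beta)$ attached to the lax morphism $(-\otimes_\cala P,\beta)$ of Proposition~\ref{laxind}, and commutes in $D_{\Z/2}(\C)$ by Proposition~\ref{proponlaxmorphisms}. For the next square I would push down along the Yoneda functors $\cala\to\underline{\text{perf}}_{\Cech}(\cala)$ (sending the single object over $U$ to $\oo_U$) and $\underline{\End}(P)\hookrightarrow\underline{\text{vect}}(X,f)\hookrightarrow\underline{\text{vect}}_{\Cech}(X,f)$: the composite $(-\otimes_\cala P)\circ\mathrm{Yon}$ sends the object over $U$ to $P|_U$ and on endomorphisms is the left-multiplication map $\cala|_U\to\underline{\End}(P)|_U$, which is exactly the map inducing $can$. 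Since $\cala$ is a presheaf of dg algebras the lax structures play no role here (as noted in Section~\ref{presheavesofcdg}), and this square commutes up to homotopy by Lemma~\ref{comparinglaxandstrict}; its lower edge is $can$ followed by the chain map $(2)$ of (\ref{totalhkrf}), applied to the image of $can$.

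At this point the claim is reduced to the following: the composite $\Cech(\mathscr{U},\underline{C}_\bullet(\underline{\End}(P)))\xrightarrow{(2)}\Cech(\mathscr{U},\underline{C}_\bullet(\underline{\text{vect}}_{\Cech}(X,f)))\xrightarrow{(5)^{-1}(4)(3)(2)^{-1}}\Cech(\mathscr{U},\underline{C}^{II}_\bullet(\oo_{-f}))$ agrees in $D_{\Z/2}(\C)$ with the trace map $\phi$ of Section~\ref{constructingphi} (precomposed with $\underline{C}_\bullet\to\underline{C}^{II}_\bullet$). This is the heart of the proof, and it is essentially the reason $\phi$ was built as it was: the supertrace $sTr$ implements the passage through the pseudo-equivalences $(4),(5)$ and the contraction of the matrix indices of $P$ against the fixed trivialisations $P|_{U_j}\cong\oo_{U_j}^{n}\oplus\oo_{U_j}^{m}$; the shuffle operator $Sh([\delta|\cdots|\delta],-)$ records the comparison maps $(3),(5)$ between the first- and second-kind complexes and the curved model $\oo_{-f}$, reinserting the differential $\delta_P$ of $P$; and the maps $h^q$ carry the change-of-basis cocycle $g_{ij}$, i.e.\ the "inner" Cech resolution built into the hom-complexes of $\text{vect}_{\Cech}$. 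I would verify the identity by tracing $a_0[a_1|\cdots|a_k]\in\Cech^p(\mathscr{U},\underline{C}_k(\underline{\End}(P)))$ through $(3),(4),(5)^{-1}$ using the explicit pseudo-equivalence inverse in $(5)^{-1}$ and the description of $(6)=HKR_{(X,-f)}$ already invoked in the proof of Theorem~\ref{theoremonphi}, matching the signs against the $\epsilon+pq+\binom{q}{2}$ of Section~\ref{constructingphi} and the $(-1)^n$ weighting of Definition~\ref{defphi}.

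The main obstacle is precisely this last step: untangling the two nested Cech resolutions — the one inside the hom-complexes of $\text{vect}_{\Cech}$ versus the outer Cech resolution of the presheaf $U\mapsto C_\bullet(\underline{\End}(P)(U))$ — and checking that the combinatorics and signs produced by the chain $(1)$–$(5)$ of (\ref{totalhkrf}), restricted to the one-object subcategory $\langle P\rangle$, reproduce the formula defining $\phi$ up to an explicit chain homotopy. Everything above it, namely the ladder of squares, is formal once the lax-morphism results of Section~\ref{presheavesofcdg} and the identifications in the proof of Lemma~\ref{diagram2} are in hand.
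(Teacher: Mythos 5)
Your ladder of squares for the ``formal'' part of the diagram is essentially the paper's: square (A) of the paper is your first square (Proposition \ref{proponlaxmorphisms} applied to the lax morphism $(-\otimes_\cala P,\beta)$ of Proposition \ref{laxind}), and your second square, pushed down along the Yoneda functors and handled via Lemma \ref{comparinglaxandstrict}, corresponds to the paper's squares (B)--(D). Your identification of $(-\otimes_\cala P)\circ\mathrm{Yon}$ with the left-multiplication map $\cala\to\underline{\End}(P)$ inducing $can$ is also correct. So the reduction to the statement that the composite of $\underline{\End}(P)\hookrightarrow\underline{\text{vect}}(X,f)$ with $(5)^{-1}(4)(3)$ agrees with $\phi$ in $D_{\Z/2}(\C)$ is the right reduction.

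The gap is in how you propose to close that last step. You flag it as ``the main obstacle'' and plan a direct element-by-element computation through $(3),(4),(5)^{-1}$, which is never carried out and is not really available as stated: the paper gives no ``explicit pseudo-equivalence inverse'' for $(5)$ (Proposition \ref{proponpseudoequivalences} only asserts that the Yoneda functor induces a quasi-isomorphism), and the only explicit quasi-inverse in sight \emph{is} $\phi$ itself, so the computation as you describe it is either circular or requires constructing a second inverse from scratch. The point you are missing is that no computation is needed: Theorem \ref{theoremonphi}(ii) already says that $\phi$ on $\Cech(\mathscr{U},\underline{C}^{II}_\bullet(\mathscr{C}))$, for $\mathscr{C}=\langle P,\oo_X\rangle$, is a quasi-inverse to the Yoneda map (\ref{yon}); the triangle formed by $\oo_{-f}\to\mathscr{C}\to\underline{\text{qvect}}(X,f)$ commutes on the nose at the level of presheaves of cdg categories (this is the paper's triangle (F)), so in $D_{\Z/2}(\C)$ one may replace the formal inverse $(5)^{-1}$ by the chain map $\phi$; and $\phi$ on $\underline{C}_\bullet(\underline{\End}(P))$ is by Definition \ref{defphi} just the restriction of $\phi$ on $\mathscr{C}$ along the subcategory inclusion (the paper's triangle (E)). The hard analytic content you defer to the end was already paid for in the proof of Theorem \ref{theoremonphi} (where the composite with the Yoneda map is identified with $HKR_{(X,-f)}$ of Proposition \ref{hkrxf}), and the present lemma only needs that theorem as a black box.
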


\begin{proof}
	We will divide the diagram into smaller commutative pieces where each arrow is a chain map:
	
	\begin{tikzcd}
	C_\bullet(\text{\text{perf}}_{\Cech}(\cala))\arrow{d}{\sim}\arrow{rr}{}\arrow[phantom, shift right, shift right, shift right, shift right, shift right, shift right, shift right, shift right, shift right]{rr}{(A)}&&C_\bullet(\text{vect}_{\Cech}(X,f))\arrow{d}{\sim}\\
	\Cech(\mathscr{U},\underline{C}_\bullet(\underline{\text{perf}}_{\Cech}(\cala)))\arrow{rr}{}\arrow[phantom, shift right, shift right, shift right, shift right, shift right, shift right, shift right, shift right, shift right]{rr}{(B)}&&\Cech(\mathscr{U},\underline{C}_\bullet(\underline{\text{vect}}_{\Cech}(X,f)))\\
	\Cech(\mathscr{U},\underline{C}_\bullet(\cala))\arrow{u}{\sim}\arrow{rr}{}\arrow[phantom, shift right, shift right, shift right, shift right, shift right, shift right, shift right]{rr}{(C)}&&\Cech(\mathscr{U},\underline{C}_\bullet(\underline{\text{vect}}(X,f)))\arrow{u}{\sim}\arrow{d}{\sim}\\
	\Cech(\mathscr{U},\underline{C}_\bullet(\cala))\arrow{d}{=}\arrow{u}{=}\arrow{r}{}\arrow[phantom, shift left, shift left, shift left, shift left, shift left, shift left, shift left, shift left, shift left, shift left, shift left, shift left, shift left, shift left, shift left, shift left, shift left, shift left,shift left, shift left, shift left, shift left, shift left]{d}{(D)}&\Cech(\mathscr{U},\underline{C}^{II}_\bullet(\mathscr{C}))\arrow{r}{}\arrow[shift left]{dr}{\phi}&\Cech(\mathscr{U},\underline{C}^{II}_\bullet(\underline{\text{qvect}}(X,f)))\\
		\Cech(\mathscr{U},\underline{C}_\bullet(\cala))\arrow{r}{}&\Cech(\mathscr{U},\underline{C}_\bullet(\mathcal{E}nd(P))\arrow{u}{}\arrow{r}{\phi}\arrow[phantom, shift right, shift right, shift right, shift right, shift right, shift right, shift right]{u}{(E)}&\Cech(\mathscr{U},\underline{C}^{II}_\bullet(\oo_{-f}))\arrow[shift left]{ul}{}\arrow{u}{\sim}\arrow[phantom, shift left, shift left, shift left, shift left, shift left, shift left, shift left, shift left, shift left, shift left]{u}{(F)}
	\end{tikzcd}
	We already know that the vertical arrows on the left and right are quasi isomorphisms. Indeed, on the left hand side this was dealt with in the proof of the previous lemma. Those on the right hand side were dealt with in section  \ref{secthochofmat}. 
	
	The bottom arrow of the square $(A)$ is $\Cech(-\otimes_\cala P,\beta)$ where $(-\otimes_\cala P,\beta)$ is the lax morphisms of presheaves of dg categories from proposition \ref{laxind}. Commutativity of the square (A) is exactly proposition \ref{proponlaxmorphisms}. 
Commutativity of (B) is similar to commutativity of the square (B) in the previous lemma. Note that all arrows except the top horizontal one are induced by morphisms of presheaves of dg categories and the top map is induced by a lax morphism. In the square $(C)$, $\mathscr{C}$ denotes the presheaf of cdg categories $\langle P,\oo_X\rangle\subset \underline{\text{qvect}}(X,f)$. All the arrows in $(C)$ are induced by morphisms of presheaves of cdg categories and the diagram commutes already at the level of presheaves of cdg categories. Same holds for the square (D). In the triangle (F) all the arrows are quasi isomorphisms and the two diagonal arrows are quasi inverses to each other. If we choose the diagonal arrow going up then (F) clearly commutes because it comes from a commutative diagram of presheaves of cdg categories and morphisms of such. Finally the triangle (E) commutes if we choose the diagonal arrow going down by the definition of the map $\phi$ from section \ref{constructingphi}
\end{proof}
Now we are ready to prove that the map on Hochschild homology induced by the pushforward functor $i_*:D^b(\text{coh}Y)\to D(\text{fact}(X,f))$ is given by first multiplying by the inverse Todd class described in section \ref{inversetodd} and then applying the connecting homomorphism $\delta$ coming from the short exact sequence (\ref{SES}). We formulate this in a precise way in the following theorem (which is the same as theorem 1 from the introduction).
\begin{theorem}
	There is a commutative diagram in $D_{Z/2}(\C)$
	$$\begin{tikzcd}
		HH_\bullet(Y)\arrow{d}{\sim}\arrow{rrr}{HH_\bullet(i_*)}&&&HH_\bullet(X,f)\arrow{d}{\sim}\\
		\Cech(\mathscr{U},\Omega_Y^\bullet)\arrow{rr}{\bar{\wedge}Td(-Y)^{-1}}&&\Cech(\mathscr{U},\Omega_Y^\bullet)\arrow{r}{\delta}&\Cech(\mathscr{U},(\Omega_X^\bullet,-df\wedge-)).
	\end{tikzcd}$$
\end{theorem}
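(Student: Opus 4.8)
The strategy is to glue together the three commutative squares of Lemmas \ref{diagram2}, \ref{upperdiagram} and \ref{diagram1} along the cdg algebra $\cala$, and then to transport the resulting formula from $\Omega_\cala^\bullet$ onto $\Omega_Y^\bullet$ using Proposition \ref{propontodd}. The first step is to rewrite $HH_\bullet(i_*)$ as a two-step zig-zag: by Lemma \ref{resolutionforoy} the square built from $\text{Ind}$, $-\otimes_\cala P$, $\underline{i}_*$ and the Yoneda embedding commutes up to natural quasi-isomorphism, and by Lemma \ref{indfunctor} the map $\text{Ind}_*\colon C_\bullet(\text{perf}_{\Cech}(\cala))\to C_\bullet(\text{perf}_{\Cech}(Y))$ is an isomorphism in $D_{\Z/2}(\C)$. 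Hence in $D_{\Z/2}(\C)$ we have $HH_\bullet(i_*)=(-\otimes_\cala P)_*\circ(\text{Ind}_*)^{-1}$, so it suffices to understand the maps $\text{Ind}_*$ and $(-\otimes_\cala P)_*$ separately and compatibly with the relevant HKR-type quasi-isomorphisms. Along the way I fix, once and for all, the identification $HH_\bullet(\text{perf}_{\Cech}(\cala))\simeq\Cech(\mathscr{U},\Omega_\cala^\bullet)$ given by the right-hand column of Lemma \ref{diagram2} (which factors through $HKR_\cala$), so that the same identification can be read off Lemma \ref{upperdiagram} as well.

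Next I would stack the diagrams. Lemma \ref{diagram2} says that, under the HKR isomorphism of section \ref{thecasef0} on $HH_\bullet(Y)$ and the fixed identification of $HH_\bullet(\text{perf}_{\Cech}(\cala))$, the map $\text{Ind}_*$ corresponds to the quasi-isomorphism $\pi\colon\Cech(\mathscr{U},\Omega_\cala^\bullet)\to\Cech(\mathscr{U},\Omega_Y^\bullet)$ induced by (\ref{qis2}). Lemma \ref{upperdiagram} says that, under the same identification on the source and the zig-zag of section \ref{secthochofmat} (the maps $(1)$–$(5)$ of (\ref{totalhkrf})) on $C_\bullet(\text{vect}_{\Cech}(X,f))$, the map $(-\otimes_\cala P)_*$ corresponds to $\phi\circ can$ at the level of Cech–Hochschild complexes. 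Postcomposing with $HKR_{\oo_{-f}}$, i.e.\ the map $(6)$ of (\ref{totalhkrf}), Lemma \ref{diagram1} identifies $HKR_{\oo_{-f}}\circ\phi\circ can$ with $\delta_\cala\circ\big(\bar{\wedge}\,\text{Td}(-Y)^{-1}\big)\circ HKR_\cala$, where $\delta_\cala\colon\Cech(\mathscr{U},\Omega_\cala^\bullet)\to\Cech(\mathscr{U},(\Omega_X^\bullet,-df\wedge-))$ is the projection out of the cone $\Omega_\cala^\bullet=\text{Cone}(L)$ (using the $\Z/2$-grading to absorb the degree shift). Using Theorem \ref{theoremonphi}(ii) to see that $HKR_{\oo_{-f}}\circ\phi$ assembles with the zig-zag of section \ref{secthochofmat} into precisely the fixed HKR isomorphism $HH_\bullet(X,f)\simeq\Cech(\mathscr{U},(\Omega_X^\bullet,-df\wedge-))$, one concludes that, under the two HKR isomorphisms, $HH_\bullet(i_*)$ is represented by $\delta_\cala\circ\big(\bar{\wedge}\,\text{Td}(-Y)^{-1}\big)\circ\pi^{-1}$.

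It remains to push this expression onto $\Cech(\mathscr{U},\Omega_Y^\bullet)$. Proposition \ref{propontodd} says exactly that $\pi$ intertwines the two Todd-multiplication maps, so $\big(\bar{\wedge}\,\text{Td}(-Y)^{-1}\big)\circ\pi^{-1}=\pi^{-1}\circ\big(\bar{\wedge}\,\text{Td}(-Y)^{-1}\big)$, reducing the claim to $\delta_\cala\circ\pi^{-1}=\delta$, the connecting homomorphism of the Cech-ification of (\ref{SES}). This is the standard comparison between the projection out of a mapping cone and the connecting morphism of the associated short exact sequence: since $\mathscr{U}$ is an affine cover, $\Cech(\mathscr{U},-)$ is exact, so (\ref{SES}) stays short exact after Cech-ification, $\text{Cone}(L)\to\Omega_Y^\bullet$ is the induced quasi-isomorphism onto the cokernel, and the cone projection to $(\Omega_X^\bullet,-df\wedge-)[1][1]$ is by construction the connecting map. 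Chasing the assembled diagram then yields the commutative square in the statement.

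The only genuine difficulty I anticipate is organizational: verifying that the $HKR_\cala$ occurring in Lemmas \ref{diagram2}, \ref{upperdiagram} and \ref{diagram1} is literally one and the same map (so the three diagrams can be glued along it), that the vertical HKR quasi-isomorphisms in those lemmas are the ones fixed in sections \ref{thecasef0} and \ref{secthochofmat}, and that no sign or degree-shift discrepancy — the $f$ versus $-f$ in $\oo_{-f}$, the shifts $[1]$ in (\ref{SES}), the $\Z/2$ collapse of $[2]$ — is mishandled when the cone projection $\delta_\cala$ is identified with the connecting homomorphism $\delta$.
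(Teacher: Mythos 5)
Your proposal is correct and follows essentially the same route as the paper: it assembles the identical ingredients (Lemmas \ref{resolutionforoy}, \ref{indfunctor}, \ref{diagram2}, \ref{upperdiagram}, \ref{diagram1}, Proposition \ref{propontodd}, and the identification of the cone projection with the connecting morphism $\delta$) into the same large commutative diagram that the paper's proof exhibits. The only differences are presentational — you phrase the argument as inverting $\text{Ind}_*$ and $\pi$ in $D_{\Z/2}(\C)$ and invoke Theorem \ref{theoremonphi}(ii) where the paper absorbs that step into the triangles of Lemma \ref{upperdiagram} — so no substantive gap remains.
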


\begin{proof}
Again we break it into smaller commutative pieces
$$\begin{tikzcd}
	&&C_\bullet^{II}(\text{vect}(X,f))\arrow{dddd}{}\\
	C_\bullet(\text{perf}_{\Cech}(Y))\arrow{d}{\sim}\arrow{urr}{}&C_\bullet(\text{perf}_{\Cech}(\cala))\arrow{l}{\sim}\arrow{ur}{}\arrow{d}{\sim}&\\
	\Cech(\mathscr{U},\Omega_Y^\bullet)\arrow{d}{\bar{\wedge}Td(-Y)^{-1}}&\Cech(\mathscr{U},\Omega_\cala^\bullet)\arrow{l}{\sim}\arrow{d}{\bar{\wedge}Td(-Y)^{-1}}&\\
	\Cech(\mathscr{U},\Omega_Y^\bullet)\arrow{drr}{\delta}&\Cech(\mathscr{U},\Omega_\cala^\bullet)\arrow{l}{\sim}\arrow{dr}{}&\\
	&&\Cech(\mathscr{U},(\Omega_X^\bullet,-df\wedge))
\end{tikzcd}
$$
The top left triangle commutes by lemma \ref{resolutionforoy}. The top left square commutes by lemma \ref{diagram2}. The bottom left square commutes by lemma \ref{propontodd}. The bottom left triangle commutes by definition of the connecting morphism $\delta$. The part on the right commutes by lemmas \ref{diagram1} and \ref{upperdiagram}.
\end{proof}


\begin{thebibliography}{9}
\bibitem{Ballard-Favero-Katzarkov}{M. Ballard, D. Favero, L. Katzarkov
\textit{A category of kernels for equivariant factorizations and its implications for Hodge theory.} Publ. Math. Inst. Hautes Études Sci.120 (2014), 1–111.}


\bibitem{Cal2}{A. Căldăraru
\textit{The Mukai pairing. II. The Hochschild-Kostant-Rosenberg isomorphism.} Adv. Math. 194 (2005), no.1, 34–66.}

\bibitem{Cal1}{A. Căldăraru, J. Tu
\textit{Curved A-infinity algebras and Landau-Ginzburg models.} New York J. Math.19(2013), 305–342.}


\bibitem{Kim1}{K. Chung, B. Kim, T. Kim \textit{A chain-level HKR-type map and a Chern character formula}
Kuerak Chung, Bumsig Kim, Taejung Kim}, arXiv:2109.14372v1

\bibitem{Drinfeld}{V. Drinfeld
\textit{DG quotients of DG categories.} J. Algebra 272 (2004), no.2, 643–691.}

\bibitem{Efimov}{A. I. Efimov
\textit{Cyclic homology of categories of matrix factorizations.} Int. Math. Res. Not. IMRN (2018), no.12, 3834–3869.}

\bibitem{EfiPos}{A. I. Efimov, L. Positselski \textit{Coherent analogues of matrix factorizations and relative singularity categories.} Algebra Number Theory 9(2015), no.5, 1159–1292.}

\bibitem{Keller}{B. Keller
\textit{On the cyclic homology of exact categories.} J. Pure Appl. Algebra 136 (1999), no.1, 1–56.}

\bibitem{Kim-Pol}{B. Kim, A. Polishchuk
\textit{Atiyah class and Chern character for global matrix factorisations.} J. Inst. Math. Jussieu 21 (2022), no.4, 1445–1470.}

\bibitem{Macl}{S. Mac Lane
\textit{Homology}.
Reprint of the 1975 edition
Classics Math.
Springer-Verlag, Berlin, 1995. x+422 pp.
ISBN:3-540-58662-8
}

\bibitem{Markarian}{N. Markarian
\textit{The Atiyah class, Hochschild cohomology and the Riemann-Roch theorem}. J. Lond. Math. Soc. (2) 79 (2009), no.1, 129–143.}

\bibitem{Orlov-Cok}{D. Orlov
\textit{Matrix factorizations for non-affine LG-models.} Math. Ann.353(2012), no.1, 95–108.}

\bibitem{Pol-Pol}{A. Polishchuk, L. Positselski
\textit{Hochschild (co)homology of the second kind I.} Trans. Amer. Math. Soc.364(2012), no.10, 5311–5368.}

\bibitem{Pol-Vaintrob}{A. Polishchuk, A. Vaintrob
\textit{Chern characters and Hirzebruch-Riemann-Roch formula for matrix factorizations.} Duke Math. J.161 (2012), no.10, 1863–1926.}


\bibitem{Ramadoss}{A. C. Ramadoss
\textit{A generalized Hirzebruch Riemann-Roch theorem}. C. R. Math. Acad. Sci. Paris 347 (2009), no.5-6, 289–292.}


\end{thebibliography}
\end{document}